\theoremstyle{theorem}
\newtheorem{theorem}{Theorem}[section]
\newtheorem{corollary}[theorem]{Corollary}
\newtheorem{lemma}[theorem]{Lemma}
\theoremstyle{definition}
\newtheorem{definition}[theorem]{Definition}
\newtheorem{example}[theorem]{Example}
\newtheorem{proposition}[theorem]{Proposition}
\newtheorem{remark}[theorem]{Remark}
\begin{document}

\title{Simplicial cell decompositions of $\mathbb{CP}^{\hspace{.3mm}n}$}

\author[1] {Basudeb Datta}
\author[2] {Jonathan Spreer}

\affil[1]{\small Department of Mathematics, Indian Institute of Science, Bangalore 560\,012, India. \newline Institute for Advancing Intelligence, TCG CREST, Kolkata 700\,091, India. \newline Emails: {\it dattab@iisc.ac.in, bdatta17@gmail.com}.}
\affil[2]{School of Mathematics and Statistics, 
The University of Sydney, NSW 2006, Australia. Email: {\it jonathan.spreer@sydney.edu.au.}}

\date{We dedicate this work to Frank H. Lutz who left us too soon}

\maketitle

\vspace{-8mm}

\begin{abstract}
 According to a well-known result in geometric topology, we have \linebreak 
 $\left (\mathbb{S}^2 \right)^{n}\!\!/\operatorname{Sym}(n) = \mathbb{CP}^{n}$, where $\operatorname{Sym}(n)$ acts on $\left (\mathbb{S}^2 \right)^{n}$ by coordinate permutation. We use this fact to explicitly construct a regular simplicial cell decomposition of $\mathbb{CP}^{n}$ for each $n \geq 2$. In more detail, we start with the standard two triangle crystallisation $S^2_3$ of the $2$-sphere $\mathbb{S}^2$, in its $n$-fold Cartesian product. We then construct a simplicial subdivision of this product and prove that the $\operatorname{Sym}(n)$ quotient of this subdivision yields a simplicial cell decomposition of $\mathbb{CP}^n$. The first derived subdivision of this cell complex is a simplicial triangulation of $\mathbb{CP}^n$. To the best of our knowledge, this is the first explicit description of triangulations of $\mathbb{CP}^n$ for $n \geq 4$. 
\end{abstract}

\noindent
\textbf{MSC 2020: }
57Q15; 
57Q05; 
05C15; 
06A06. 

\medskip
\noindent
\textbf{Keywords:} Complex projective spaces, triangulations of manifolds, simplicial cell complexes. 

\section{Introduction}

A long-standing problem in combinatorial and geometric topology is to find an explicit triangulation of the $n$-dimensional complex projective space $\mathbb{CP}^{n}$ using as few vertices as possible. In \cite{AM91}, Arnoux and Marin proved that any triangulation of $\mathbb{CP}^{n}$ requires at least $(n+1)^2$ vertices, and equality can only hold for $n\leq 2$. For $n=1$, $\mathbb{CP}^{1}\cong \mathbb{S}^{2}$ has a unique four-vertex triangulation $S^{2}_4$, the boundary complex of the three dimensional simplex. For $n=2$, it has long been known that $\mathbb{CP}^{2}$ has a unique 9-vertex triangulation $\mathbb{CP}^{2}_9$, first found by K\"{u}hnel and Banchoff (see \cite{AM91, BD94, KB83}). But the problem of constructing triangulations for larger $n$ that are as close to the Arnoux-Marin bound as possible remains open. Even more, for $n\geq 4$, no explicit triangulation of $\mathbb{CP}^{n}$ appears to be known. A representation of $\mathbb{CP}^{n}$ - as a quotient of a real $2n$-dimensional torus in $\mathbb{C}^{n+1}$ by $\operatorname{Sym}(n+1)$ - was used in \cite[\S\,14]{AM91} to obtain an $(n+1)^2$ vertex weak cellulation of $\mathbb{CP}^{n}$. The authors of \cite{AM91} observed that, when $n\geq 3$, this weak cellulation can not be triangulated without adding extra vertices. Triangulations of $\mathbb{CP}^{n}$ were announced in \cite{S2014}, but have since been withdrawn from arXiv.

One idea to obtain reasonably small triangulations of $\mathbb{CP}^{n}$ is based on the following observation: There exists a natural (non-free) action of the symmetric group $\operatorname{Sym}(n)$ on $(\mathbb{S}^{2})^n$, the $n$-th Cartesian power of the $2$-sphere, by coordinate permutation. It is well-known that the quotient space $(\mathbb{S}^{2})^n/\operatorname{Sym}(n)$ is the $n$-dimensional complex projective space $\mathbb{CP}^{n}$ (see \cite[Lemma 2.3]{BD12}). A triangulation of complex projective space is then obtained by triangulating the branched covering $(\mathbb{S}^{2})^n \to (\mathbb{S}^{2})^n/\operatorname{Sym}(n)= \mathbb{CP}^{n}$.

This approach is explored in \cite{BD11, BD12}: Starting with the 4-vertex triangulation $S^2_4$ of $\mathbb{S}^2$, a simplicial subdivision $((S^2_4)^n)^{\prime}$ of the cell complex $(S^2_4)^n$ is considered such that the quotient $((S^2_4)^n)^{\prime}/\operatorname{Sym}(n)$ is a triangulation of $\mathbb{CP}^n$. Carrying out this method yields a $10$-vertex triangulation of $\mathbb{CP}^{2}$ and a $30$-vertex triangulation of $\mathbb{CP}^{3}$. The latter triangulation can be reduced to an $18$-vertex triangulation of $\mathbb{CP}^{3}$ by using bistellar flips (specifically, the GAP program BISTELLAR due to Lutz \cite{GAP,L03}, or any other implementation of simplification heuristics in mathematical software, such as \cite{regina,simpcomp}). 
Note that, for $n\geq 3$, it is {\em not} possible to simplicially subdivide $(S^2_4)^n$ without adding extra vertices such that the quotient is a triangulation of a manifold, cf. \cite{BD12}. 

In \cite{KS23}, the authors follow an alternative approach to find triangulations of projective spaces: in the complex case, $\mathbb{CP}^n$ is decomposed into $(n+1)$ balls of real dimension $2n$ as ``zones of influence'', where a given homogeneous coordinate direction is dominant in absolute value. Sets of these balls naturally intersect in points that have the corresponding set of coordinates jointly being dominant. The intersection of all balls yields a central $n$-dimensional torus. This decomposition of $\mathbb{CP}^n$ into $2^{n+1}-1$ subsets is known as its {\em equilibrium decomposition}. This point of view gives one way to triangulate $\mathbb{CP}^n$, by first starting with a triangulation of the $n$-torus, and then ``filling in'' more and more bits and pieces. This approach leads to a beautiful $10$-vertex triangulation of $\mathbb{CP}^2$, first described in \cite{BK92}, but faces problems already for $n=3$.

Other approaches to construct triangulations of $\mathbb{CP}^n$, including a sketch of the approach presented here, are discussed in \cite{overflow}. Moreover, there exists an overview over triangulations of complex projective spaces in low dimensions due to Sergeraert \cite{S2010}, where the notion of {\em triangulation} denotes a more general type of decomposition with non-bijective gluing maps. 

While triangulations of $\mathbb{CP}^n$ are notoriously difficult to explicitly write down, this is easier in the real case of $\mathbb{RP}^n$. Here, the first family of explicit triangulations is due to K\"{u}hnel \cite{K1986}. The fundamental challenge that remains in the real case, is to find triangulations with as few vertices as possible. The classical triangulations from \cite{K1986} have $2^{n+1}-1$ vertices, but recent advances include triangulations with $O\left(\left(\frac{1+\sqrt{5}}{2}\right)^{n+1}\right)$ vertices \cite{VZ2021} and even a family with number of vertices subexponential in $n$ \cite{AAK2022}. The question of whether there exist triangulations of $\mathbb{RP}^n$ with number of vertices a polynomial in $n$ is still open. Moreover, leaving the realm of simplicial complexes and considering the more general class of (regular) simplicial cell complexes, a very elegant and simple construction exists producing such complexes for all $\mathbb{RP}^n$ with the smallest number of vertices, see \Cref{exam:RP^n}.

In this article, we follow exactly this strategy for triangulations of $\mathbb{CP}^n$. Namely, we again exploit the action of $\operatorname{Sym}(n)$ on $(\mathbb{S}^2)^n$, but apply it to (regular) simplicial cell decompositions of $(\mathbb{S}^{2})^n$ and $\mathbb{CP}^{n}$ (rather than simplicial complexes): 
Let $S^2_3$ be the simplicial cell complex consisting of two triangles, glued along their boundaries. Taking the $n$-fold Cartesian product of $S^2_3$ with itself, we obtain a cell decomposition $\widetilde{X}^n$ of $(\mathbb{S}^2)^n$ with $2^n$ cells of real dimension $2n$. We introduce the notion of a {\em good action} of a group $G\leq \operatorname{Aut}(X)$ on a simplicial cell complex $X$, and prove that if such an action is good, then $X/G$ is also a simplicial cell complex and $|X/G|$ is homeomorphic to $|X|/G$ (cf. Lemma \ref{lem:quotient} and Corollary \ref{cor:good}). We then construct a simplicial 
subdivision $X^n$ (Theorem \ref{theo:scdX^n}) of $\widetilde{X}^n$ without adding new vertices such that the action of $\operatorname{Sym}(n)$ on $X^n$ is good (cf. Theorem \ref{theo:good}). The result is a simplicial cell structure $X^n/\operatorname{Sym}(n)$ of $\mathbb{CP}^n$ for all $n$ (cf. Theorem \ref{theo:cp^n}). In fact, it can be shown that $X^n$ can be represented as an edge coloured version of its dual graph, and thus is a graph encoded manifold, see \cite{FGG1986} for details.
Taking the first derived subdivision of the simplicial cell complex $X^n/\operatorname{Sym}(n)$ one obtains a triangulation of $\mathbb{CP}^n$ for each $n \geq 2$. 
This yields, for the first time, an explicit triangulation of $\mathbb{CP}^n$ for each $n\geq 4$. 

Code to produce simplicial cell complexes $X^n$ and $X^n/\operatorname{Sym}(n)$ is discussed in \Cref{sec:code,app:code}, and given in \cite{DS2024}. In \Cref{app:isosigs}, we list $X^n/\operatorname{Sym}(n)$, $n \in \{2,3,4\}$ explicitly as {\em Regina} \cite{regina} triangulations and/or graph encodings.

\subsection*{Acknowledgements}

Research of the second author is supported in part under the Australian Research Council's Discovery funding scheme (project number DP220102588). This article was completed while the second author was on sabbatical at the Technische Universit\"at (TU) Berlin. The author would like to thank TU Berlin, and particularly Michael Joswig and his research group, for their hospitality in times of difficult circumstances. The authors thank the anonymous referees for some useful comments which led to improvements in the presentation of this paper.

\section{Preliminaries}

By a poset, we mean  a finite $X$ with a partial ordering $\leq$. By $\alpha<\beta$, we mean $\alpha\leq \beta$ and $\alpha\neq \beta$. A bijection $f: (K, \leq) \to (L, \leq)$ between two posets is called an {\em isomorphism} if $a<b$ if and only if $f(a) < f(b)$ for $a, b\in K$. Two posets are called {\em isomorphic} if there exits an isomorphism between them. In this article, we do not distinguish between isomorphic posets.  An isomorphism $f : (K, \leq) \to (K, \leq)$ is called an {\em automorphism} of $(K, \leq)$. The set $\operatorname{Aut}(K)$ of all the automorphisms of $(K, \leq)$ is a group under composition and is called the {\em automorphism group} of $(K, \leq)$. Any subgroup of $\operatorname{Aut}(K)$ is called a {\em group of automorphisms} of $K$. 

A CW-complex is said to be {\em regular} if all of its closed cells are homeomorphic to closed balls. Given a CW-complex $M$, let ${\mathcal M}$ be the set of all closed cells of $M$ together with the empty set $\emptyset$. Then ${\mathcal M}$ is a poset, where the partial ordering is given by set inclusion. This poset ${\mathcal M}$ is said to be the {\em face poset} of $M$. If $M$ and $N$ are two finite regular CW-complexes with isomorphic face posets then $M$ and $N$ are homeomorphic. A regular CW-complex $M$ is said to be {\em simplicial}, if the {\em boundary complex} $\{\alpha\in {\mathcal M} : \alpha \subsetneq \sigma\}$ of each cell $\sigma$ of $M$ is isomorphic (as a poset) to the boundary complex of a simplex. A simplicial CW-complex is also called a {\em simplicial cell complex}. All CW-complexes considered in this paper are finite.

A {\em simplicial poset} $K= (K,\leq)$ of dimension $d$ is a poset isomorphic to the face poset ${\mathcal M}$ of a $d$-dimensional simplicial cell complex $M$. The topological space $M$ is called the {\em geometric carrier} of $K$ and is also denoted by $|K|$. If a topological space $N$ is homeomorphic to $|K|$, then $K$ is said to be a {\em simplicial cell decomposition} (or {\em pseudotriangulation}) {\em of $N$}. If $|K|$ is connected then we say $K$ is {\it connected}. Simplicial posets are also referred to in the literature as {\em CW posets}, {\em pseudo-simplicial complexes} or {\em pseudocomplexes} (see \cite{BD14, B84, M13} for details).  We identify a simplicial cell complex $M$ with the face poset of $M$. Accordingly, by a simplicial cell complex we also mean a simplicial poset. 

For $\alpha, \beta\in K$, we say $\alpha$ is {\em incident} to $\beta$ if $\alpha\leq\beta$ or $\beta\leq\alpha$. For $\alpha\in K$, $\partial \alpha := \{\beta \in K \, : \, \beta <\alpha\}$ is a simplicial poset and is said to be the {\em boundary} of $\alpha$. If $\partial \alpha$ is isomorphic to the boundary complex of a $j$-simplex, then $\alpha$ is a called a {\em $j$-cell} of $K$. The 0-cells and 1-cells of a simplicial poset $K$ are called the {\it vertices} and {\em edges} of $K$ respectively. The set $V(K)$ of vertices of a simplicial poset $K$ is called the {\em vertex set} of $K$. The vertex set $V_K(\alpha)$ ($:=\{v\in V(K) : v\leq\alpha\}$) of a $j$-cell $\alpha$ has $j+1$ vertices. If $U\subseteq V_K(\alpha)$, then the (unique) face of $\alpha$ whose vertex set is $U$ is denoted by $\alpha(U)$. 

For  an element $\alpha$ of a simplicial poset $K$, the poset $\operatorname{Lk}_{K}(\alpha) := \{\beta\in K \, : \, \alpha \leq \beta\}$ is also a simplicial poset and is called the {\em link} of $\alpha$ in $K$ (cf. \cite{M13}). Observe that $\alpha$ is the smallest element (i.e., the unique element of dimension $-1$) in $\operatorname{Lk}_{K}(\alpha)$.  Note that for this definition to work it is essential that the cell complex underlying $K$ is regular.

To gain some geometric intuition about the link, suppose $K$ is the face poset of a simplicial CW-complex $M$. We can assume that $M$ is a metric space. For each vertex $u$ of $K$, consider a sphere $S_u$ in $M$ with centre $u$ and of small radius so that $S_u$ intersects only members of $\operatorname{Lk}_K(u)\setminus\{u\}$ and $u$ is the only vertex interior of $S_u$.  Then $L_u := \{\alpha\cap S_u \, : \, \alpha\in M\}$ is a simplicial CW-complex, and for each $j$-cell $\beta$ of $M$ with $u\leq\beta$, $S_u\cap\beta$ is a $(j-1)$-cell of $L_u$. This shows that $\operatorname{Lk}_K(u)$ is isomorphic to the face poset of $L_u$. Now suppose that $\alpha$ is a $j$-cell, $j\geq 1$. Let $u$ be a vertex of $\alpha$. Then $\alpha$ is a $(j-1)$-cell in $\operatorname{Lk}_K(u)$ and $\operatorname{Lk}_K(\alpha)= \operatorname{Lk}_{\operatorname{Lk}_K(u)}(\alpha)$. By induction on the dimension of the cell $\alpha$, $\operatorname{Lk}_{\operatorname{Lk}_K(u)}(\alpha)$ is a simplicial cell complex and hence $\operatorname{Lk}_K(\alpha)$ is so. 

Suppose $L\subseteq K$ are two simplicial posets. We say that $L$ is a {\it sub-poset} of $K$ if $\beta$ is a $j$-cell of $K$ and $\beta\in L$ implies the dimension of $\beta$ in $L$ is also $j$. So, $L$ is a sub-poset of $K$ $\Longleftrightarrow$ $(\gamma < \beta$ and $\gamma, \beta\in K$ imply $\gamma\in L)$. Equivalently, if $K$ is the face poset of a simplicial CW-complex $M$ then $L$ is the face poset of a subcomplex $N$ of $M$. Thus, for $\alpha\in K$, $\partial\alpha$ is a sub-poset of $K$ but $\operatorname{Lk}_K(\alpha)$ is not for $0\leq \dim(\alpha) < \dim(K)$. (Suppose, $\emptyset< \alpha < \beta$ are cells of a simplicial poset $K$. Then $\{\emptyset\} \subsetneq \operatorname{Lk}_K(\alpha)\subsetneq K$. If $\gamma < \beta$ and $\alpha\not\leq \gamma$, then $\beta\in \operatorname{Lk}_K(\alpha)$, $\gamma < \beta$ in $K$ but $\gamma\not\in\operatorname{Lk}_K(\alpha)$. Thus $\operatorname{Lk}_K(\alpha)$ is not a sub-poset of $K$.) 

If $K$ is a $d$-dimensional simplicial poset and $k\leq d$ then $\operatorname{skel}_k(K) := \{\alpha\in K \, : \, \dim(\alpha) \leq k\}$ is a sub-poset of $K$ (with induced partial ordering) of dimension $k$ and is called the {\em $k$-skeleton} of $K$. The $1$-skeleton of $K$ is a graph and is called the {\em edge graph} of $K$. $K$ is connected if and only if the graph $\operatorname{skel}_1(K)$ is connected. We say two vertices are adjacent in $K$ if they are adjacent in $\operatorname{skel}_1(K)$. If a vertex $v$ is a vertex in $k$ edges of $K$ then we say $v$ is a vertex of {\it degree $k$} and we write $\deg_K(v)=k$. 

A $d$-dimensional simplicial poset $K$ is called {\it pure} if each cell of $K$ is a face of a $d$-cell (i.e., all the maximal cells are $d$-cells). The $d$-cells and $(d-1)$-cells of a pure $d$-dimensional simplicial poset $K$ are called the {\it facets} and {\it ridges} of $K$ respectively. If $K$ is pure and $\alpha\in K$ then $\operatorname{Lk}_{K}(\alpha)$ is also pure. 

The {\it dual graph} $\Lambda(K)$ of a pure simplicial poset $K$ is the graph whose vertices are the facets of $K$ and edges are the ordered pairs $(\{\sigma_1, \sigma_2\}, \gamma)$, where $\gamma$ is a ridge and is a common face of the facets $\sigma_1$, $\sigma_2$.  
Since each 1-cell of $K$ corresponds to a geometric $1$-simplex (closed ball of dimension one), it follows that the graph $\Lambda(K)$ has no loops. 
Observe, however, that $\Lambda(K)$ can have multiple edges running between the same two vertices. 

Suppose $(X, \leq)$ is the face poset of the simplicial CW-complex $M$. Let $M^{\prime}$ be the derived subdivision of $M$ (with the cells of $M$ identified with the vertices of $M'$). The face poset of $M^{\prime}$ is called the {\it derived subdivision} of $(X,\leq)$ and is denoted by $(X^{\prime}, \leq)$. The poset $X^{\prime}$ is (isomorphic to) the set of all chains of the form $\emptyset=\alpha_0< \alpha_1 < \cdots< \alpha_k$, for some $k\geq 0$, in $(X,\leq)$ and $C_1 \leq C_2$ if $C_1$ is a sub-chain of $C_2$. Since $M^{\prime}$ is a geometric simplicial complex, it follows that $(X^{\prime}, \leq)$ is a simplicial complex. In fact, if $\beta_1, \dots, \beta_m$ are common elements of two chains $C, D$ and $\beta_1< \cdots<\beta_m$ then $E := \emptyset < \beta_1< \cdots<\beta_m$ is a common sub-chain of $C, D$ and is in $X^{\prime}$. 

For a $d$-dimensional simplicial poset $K$, let $f_j(K)$ denote the number of $j$-cells of $K$ for $0\leq j\leq d$. The number $\chi(K) := \sum_{j=0}^{d} (-1)^jf_j(K)$ is called the {\em Euler characteristic of $K$}. 
The $(d+1)$-tuple $f(K) := (f_0(K), \dots, f_d(K))$ is called the {\it face vector} or {\em $f$-vector} of $K$.

\section{Suitability for a simplicial cell complex to allow quotients}\label{sec:quotient}

In this section we discuss the effect of taking the quotient of a simplicial cell complex by a group of automorphisms acting on it.

When considering a group $G$ acting on a simplicial complex $K$, the notions of {\em goodness} and {\em pureness} are known. Each one implies that $|K|/G \cong \left | K/G \right |$ (cf. \cite{BD12}). Where, the latter expression for the quotient is defined as the simplicial complex with vertex set the $G$-orbits of $V(K)$. More precisely, we have the following.

\begin{definition} \label{def:good_sc}
Let $G$ be a group of automorphisms of a
simplicial complex $K$. The action of $G$ on $K$ is called {\em good} if, for every pair $x\neq y$ of vertices of $K$ from a common $G$-orbit,
\begin{itemize}
  \item $xy$ is a non-edge; and 
  \item there exists a $g\in G$ such that $g(x) = y$, and $g$ fixes all the vertices of $\operatorname{Lk}(x)\cap \operatorname{Lk}(y)$.
\end{itemize}
\end{definition}

\begin{proposition}[Corollary 2.7 in \cite{BD12}] \label{prop:good}
Let $G$ be a group of automorphisms of a simplicial
complex $K$. If the action of $G$ on $K$ is good, then $|K/G| \cong |K|/G$.
\end{proposition}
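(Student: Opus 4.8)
The plan is to build an explicit continuous bijection $\phi\colon |K|/G \to |K/G|$ and then promote it to a homeomorphism by a compactness argument. As a preliminary step I would check that $K/G$ is genuinely a simplicial complex: the first clause of \Cref{def:good_sc} (``$xy$ is a non-edge'') forces that no simplex of $K$ contains two distinct vertices from a common $G$-orbit, so the orbit map on vertices $\pi\colon V(K) \to V(K)/G$ restricts to an injection on the vertex set of every simplex of $K$; hence $K/G := \{\pi(\sigma) : \sigma \in K\}$ is closed under taking faces and is an abstract simplicial complex with vertex set $V(K)/G$. The vertex map $\pi$ extends affinely on each simplex to a simplicial map $|\pi|\colon |K| \to |K/G|$, which is surjective, is injective on each closed simplex of $|K|$, and satisfies $|\pi|\circ g = |\pi|$ for all $g\in G$; therefore $|\pi|$ factors through the quotient $|K|/G$, yielding a continuous surjection $\phi$.

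The real content is the injectivity of $\phi$: I must show that $|\pi|(x) = |\pi|(y)$ implies $y = g(x)$ for some $g\in G$. Let $p := |\pi|(x) = |\pi|(y)$, let $\bar\rho$ be the carrier of $p$ in $|K/G|$ (the unique cell having $p$ in its relative interior), with vertices $\bar u_0,\dots,\bar u_k$, and write $p = \sum_{i} t_i \bar u_i$ with all $t_i > 0$. Since $|\pi|$ is injective on each simplex (hence an affine isomorphism onto its image, carrying relative interiors to relative interiors), it maps the carrier $\sigma$ of $x$ isomorphically onto $\bar\rho$; so $\sigma$ has vertices $v_0,\dots,v_k$ with $\pi(v_i)=\bar u_i$ and $x = \sum_i t_i v_i$, and similarly the carrier $\tau$ of $y$ has vertices $w_0,\dots,w_k$ with $\pi(w_i)=\bar u_i$ and $y = \sum_i t_i w_i$. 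Thus it suffices to produce a single $g\in G$ with $g(v_i) = w_i$ for every $i$, since then $g(x) = y$.

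To find $g$ I would induct on $m := \#\{\, i : v_i \neq w_i \,\}$. If $m = 0$ then $V(\sigma) = V(\tau)$, so $\sigma = \tau$ and $x = y$, and $g = \operatorname{id}$ works. If $m > 0$, fix an index $i$ with $v_i \neq w_i$; as $\pi(v_i) = \pi(w_i)$ these vertices lie in one $G$-orbit, so by the second clause of \Cref{def:good_sc} there is $g_i \in G$ with $g_i(v_i) = w_i$ that fixes every vertex of $\operatorname{Lk}_K(v_i) \cap \operatorname{Lk}_K(w_i)$. For any index $j$ with $v_j = w_j$, the vertex $v_j$ is a vertex of $\sigma$ different from $v_i$ and a vertex of $\tau$ different from $w_i$, so $v_iv_j$ and $w_iv_j$ are edges of $K$; hence $v_j \in \operatorname{Lk}_K(v_i)\cap\operatorname{Lk}_K(w_i)$ and $g_i(v_j) = v_j$. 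Now replace $(\sigma,x)$ by $(g_i(\sigma),g_i(x))$: its vertices are $g_i(v_0),\dots,g_i(v_k)$ with $\pi(g_i(v_\ell)) = \pi(v_\ell) = \bar u_\ell$ and $g_i(x) = \sum_\ell t_\ell g_i(v_\ell)$, and one checks $g_i(v_\ell) = w_\ell$ whenever $\ell = i$ or $v_\ell = w_\ell$, so the set of indices on which the new simplex disagrees with $\tau$ is a proper subset of the old one. By the inductive hypothesis applied to $(g_i(x), y)$ there is $h\in G$ with $h(g_i(v_\ell)) = w_\ell$ for all $\ell$, and $g := h g_i$ finishes the construction. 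Finally, $\phi$ is a continuous bijection from the compact space $|K|/G$ onto the Hausdorff polyhedron $|K/G|$, hence a homeomorphism.

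The step I expect to be the main obstacle is precisely this inductive assembly of a global automorphism $g$ from the local automorphisms $g_i$: one must organise the bookkeeping of already-matched barycentric coordinates and, critically, verify that every already-matched vertex lies in $\operatorname{Lk}_K(v_i) \cap \operatorname{Lk}_K(w_i)$ so that it is left fixed by $g_i$ — this is the only point at which the second, substantive condition in the definition of a good action is actually used.
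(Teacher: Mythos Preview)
Your argument is correct. Note, however, that the paper does not actually supply a proof of this proposition: it is quoted verbatim as Corollary~2.7 of \cite{BD12} and used as a black box. So there is no ``paper's own proof'' to compare against here. That said, your write-up is essentially the standard argument behind the cited result: one first uses the non-edge condition to see that $K/G$ is an honest simplicial complex and that the orbit map is simplicial and injective on each closed simplex; then one proves that $|\pi|$-fibres are exactly $G$-orbits by the inductive ``one mismatch at a time'' construction, using the second clause of \Cref{def:good_sc} to guarantee that already-matched vertices are not disturbed; and finally one upgrades the continuous bijection to a homeomorphism via compactness (legitimate here because all complexes in the paper are finite). The bookkeeping you flag as the delicate step is handled correctly: for each already-matched index $j$ one has $v_j=w_j$ adjacent to both $v_i$ and $w_i$ inside $\sigma$ and $\tau$ respectively, so $v_j\in\operatorname{Lk}(v_i)\cap\operatorname{Lk}(w_i)$ and is fixed by $g_i$, whence the mismatch count strictly drops.
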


To extend \Cref{prop:good} to simplicial cell complexes, we introduce the following definition.

\begin{definition} \label{def:good_scc}
Let $G \leq \operatorname{Aut}(X)$ be a group of automorphisms of a
simplicial cell complex $X$. We say that the action of $G$ on $X$ is {\em
good}, if two vertices of $X$ in the same $G$-orbit are not adjacent in $X$. 
\end{definition}

In the next lemma, we present an equivalent form of this customised definition of goodness (which we need in the proof of Lemma \ref{lem:good}). 

\begin{lemma} \label{lem:good_scc}
Let $G \leq \operatorname{Aut}(X)$ be a group of automorphisms of a
simplicial cell complex $X$. The following are equivalent. 
\begin{enumerate}[{$(i)$}]
\item 
For $\sigma \in X$ and $g \in G$, if $\alpha$ is a cell in $g(\sigma) \cap \sigma$, then $g(\alpha) = \alpha$. \newline $[$Observe that $g(\sigma)\in X$, but $\sigma\cap g(\sigma)$ need not be a single cell of $X$.$]$ 
\item The action of $G$ on $X$ is good. 
\end{enumerate}
\end{lemma}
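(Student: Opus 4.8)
The plan is to prove the two implications $(i)\Rightarrow(ii)$ and $(ii)\Rightarrow(i)$ separately, reducing everything to statements about vertices since a simplicial cell is determined by its vertex set together with the ambient cell (the map $U\mapsto \alpha(U)$).

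\medskip

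\noindent\textbf{The direction $(i)\Rightarrow(ii)$.}
Suppose $(i)$ holds and suppose, for contradiction, that two distinct vertices $x\neq y$ in the same $G$-orbit are adjacent in $X$, so there is an edge $e$ with $V_X(e)=\{x,y\}$ and a $g\in G$ with $g(x)=y$. Consider the cell $\sigma:=e$. Then $x$ is a common vertex of $\sigma$ and $g(\sigma)$ (indeed $x\leq e$ and $x = g(y)\dots$ — more carefully, I would instead apply $(i)$ to the vertex $y$: since $g(x)=y$, the vertex $y$ lies in both $\sigma = e$ and in $g(\sigma) = g(e)$, because $y\leq e$ and $y = g(x)\leq g(e)$). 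So $\alpha := y$ is a cell of $\sigma\cap g(\sigma)$, and $(i)$ forces $g(y)=y$. But $g$ is an automorphism of finite order acting on the orbit of $x$; the relation $g(x)=y$ and $g(y)=y$ would let me argue that $g$ fixes $y$ while mapping $x\neq y$ to $y$, contradicting injectivity of $g$. Hence no such adjacent pair exists and the action is good. (I should double-check the cleanest choice of $\sigma$ here; the key point is simply that a shared vertex between $\sigma$ and $g(\sigma)$ that is the image of another shared vertex violates injectivity unless it is fixed.)

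\medskip

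\noindent\textbf{The direction $(ii)\Rightarrow(i)$.}
Assume the action is good, i.e.\ no two distinct vertices in a common $G$-orbit are adjacent. Let $\sigma\in X$, $g\in G$, and let $\alpha$ be a cell of $g(\sigma)\cap\sigma$, with vertex set $V_X(\alpha) = U \subseteq V_X(\sigma)$. Since $\alpha\leq g(\sigma)$ we have $U\subseteq V_X(g(\sigma)) = g(V_X(\sigma))$, so for each $u\in U$ there is a $u'\in V_X(\sigma)$ with $g(u')=u$. I want to show $g(u)=u$ for each $u\in U$, which gives $g(\alpha)=\alpha$ because a face of the cell $g(\sigma)$ is uniquely determined by its vertex set. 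Now both $u'$ and $u$ lie in $V_X(\sigma)$ — $u$ because $\alpha\leq\sigma$, and $u'$ because $g(u')=u$ with... wait, that only puts $u'$ in $V_X(\sigma)$ if $u'\in V_X(\sigma)$, which is exactly how it was chosen. So $u'$ and $u$ are both vertices of the single cell $\sigma$, hence they are either equal or adjacent in $X$ (any two distinct vertices of a common cell span an edge of that cell). They also lie in a common $G$-orbit since $g(u')=u$. Goodness therefore forbids them from being distinct-and-adjacent, so $u' = u$, i.e.\ $g(u)=u$. As this holds for all $u\in U$, we conclude $g(\alpha)=\alpha$.

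\medskip

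\noindent\textbf{Main obstacle.}
I expect the only delicate points to be bookkeeping ones: making sure that ``$\alpha$ is a cell of $g(\sigma)\cap\sigma$'' is used correctly to place vertices inside the \emph{single} cells $\sigma$ and $g(\sigma)$ (so that the ``any two distinct vertices of a cell are adjacent'' fact applies), and, in the forward direction, picking the right $\sigma$ so that a shared vertex arises as the $g$-image of another shared vertex. Both hinge on the elementary facts that a simplicial cell has its faces in bijection with subsets of its vertex set, and that automorphisms of $X$ are in particular injective on vertices; no deep input is needed. The parenthetical remark in $(i)$ — that $\sigma\cap g(\sigma)$ need not be a single cell — is a warning that in the $(ii)\Rightarrow(i)$ argument I must reason cell-by-cell about $\alpha$ rather than about the intersection as a whole, which the argument above does.
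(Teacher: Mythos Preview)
Your proof is correct and follows essentially the same route as the paper's: in both directions you reduce to vertices, using that two distinct vertices of a single cell $\sigma$ span an edge $\sigma(\{u,v\})$, and that a face of a cell is determined by its vertex set. The paper organizes the $(ii)\Rightarrow(i)$ direction by first proving the auxiliary claim that \emph{every} vertex of $\sigma\cap g(\sigma)$ is fixed by $g$ and then deducing $V(\alpha)=V(\beta)$ for the preimage $\beta=g^{-1}(\alpha)\leq\sigma$, whereas you argue directly on the vertices of $\alpha$ and conclude $g(\alpha)=\alpha$ inside $g(\sigma)$; these are equivalent. Your aside that ``$g$ is an automorphism of finite order'' is unnecessary---injectivity alone gives the contradiction $g(x)=y=g(y)$ with $x\neq y$, exactly as the paper does---so you can drop that remark in a clean write-up.
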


\begin{proof}
First assume $(i)$ holds. If possible, suppose $(ii)$ does not hold. Then there exist two vertices $x\neq y$ in the same $G$-orbit which are adjacent via an edge $e$ of $X$. Since $x$ and $y$ are in the same $G$-orbit, there exists $g\in G$ such that  $g(x)=y$. Then $y$ is a 0-cell in $e \cap g(e)$. Therefore, by $(i)$, $g(y)=y$. Thus, $g(x) = y= g(y)$. This is not possible, since $g$ is an automorphism. Thus $(ii)$ holds. 

Now, suppose $(ii)$ holds. Let $\sigma\in X$ and $g\in G$. Suppose $v$ is a vertex in $\sigma\cap g(\sigma)$. Then $v$ is of $\sigma$ and $v= g(u)$ for some vertex $u$ of $\sigma$. If possible, suppose $u\neq v$. Then $u,v \in V(\sigma)$ and hence vertices of the edge $e= \sigma(\{u,v\})\leq \sigma$. Hence, $v=g(u)$ and $u$ are in the same $G$-orbit and vertices of the edge $e$. This is not possible since $(ii)$ holds. Therefore, $v=u =g^{-1}(v)$, and hence $g(v) =v$. Thus, $g(x)=x$ for each vertex $x$ in $\sigma\cap g(\sigma)$. 

Now suppose $\alpha$ is a cell in $\sigma\cap g(\sigma)$. Then $\alpha\leq \sigma$ and $\alpha=g(\beta)$ for some $\beta \leq \sigma$. Let $x$ be a vertex of $\alpha\leq \sigma$. Then $g^{-1}(x)$ is a vertex of $g^{-1}(\alpha)=\beta\leq \sigma$. This implies that $x$ is a vertex of $g(\sigma)$. Thus, $x$ is a vertex in $\sigma\cap g(\sigma)$. Therefore, by the above, $g(x) =x$.  Hence $x=g^{-1}(x)\in V(g^{-1}(\alpha)) = V(\beta)$. This implies that $V(\alpha) = V(\beta)$. Therefore, $\alpha = \sigma(V(\alpha)) = \sigma(V(\beta)) = \beta$. Thus, $(i)$ holds. 
\end{proof}

If $G\leq \operatorname{Aut}(Y)$ for a poset $(Y, \leq)$, then consider the set $Y/G$ of $G$-orbits of elements of $Y$. The partial order $\leq$ on $Y$ induces a canonical partial order on $Y/G$ (which we also denoted by $\leq$). Namely, we have $A\leq B$, for $A, B\in Y/G$, if there exist $a\in A$ and $b\in B$ such that $a\leq b$. Observe that the orbit $O_v$ of a vertex $v$ of $Y$ is a vertex of $Y/G$, the orbit $O_e$ of an 1-cell $e$ of $Y$ is a $1$-cell of $Y/G$ and the orbit $O_{\alpha}$ of an $i$-cell $\alpha$ of $Y$ is an $i$-cell of $Y/G$ for $0\leq i\leq \dim(Y)$. Thus, if $a_0 < \cdots < a_m$ is a chain in $Y$, then $O_{a_0} < \cdots < O_{a_m}$ is a chain in $Y/G$. This implies that $\dim(Y/G) =\dim(Y)$.  For example, if $X=\{\emptyset, a, b, e=[a, b]\}$, where $a< b\in \mathbb{R}$, then $X$ is a $1$-dimensional  CW complex. We have  $\mbox{Aut}(X) = \{\mbox{Id}, g\}\cong \mathbb{Z}_2$, where $g(a) =b$, $g(b)=a$ and $g(e)=e$. Here $X$ is a simplicial cell complex,  $X/\mbox{Aut}(X)$ consists of one vertex and one 1-cell and is a $1$-dimensional CW complex but not a simplicial cell complex. Thus, for a simplicial cell complex $(Y, \leq)$ and $G\leq \operatorname{Aut}(Y)$, $(Y/G, \leq)$ need not be a simplicial cell complex. Here we prove 

\begin{lemma} \label{lem:quotient}
Let $X$ be a simplicial cell complex, and let $G \leq \operatorname{Aut}(X)$. 
The quotient poset $X/G$ is a simplicial cell complex if and only if the action of $G$ on $X$ is good. 
\end{lemma}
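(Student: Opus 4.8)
The plan is to prove both directions of the equivalence, with the ``only if'' direction being straightforward and the ``if'' direction requiring the bulk of the work.

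\textbf{The easy direction.} First I would show that if the action of $G$ on $X$ is not good, then $X/G$ is not a simplicial cell complex. If the action is not good, there are distinct vertices $x, y$ in a common $G$-orbit joined by an edge $e = \sigma(\{x,y\})$ of $X$. In the quotient $X/G$, the orbit $O_x = O_y$ is a single vertex, while $O_e$ is a $1$-cell whose boundary consists of the vertices below it. Since both $x$ and $y$ lie below $e$ and map to the same element $O_x$, the boundary $\partial O_e$ contains only one vertex, so $\partial O_e$ is not isomorphic to the boundary complex of a $1$-simplex (which has two vertices). Hence $O_e$ is not a genuine $1$-cell and $X/G$ fails to be a simplicial cell complex. (This mirrors the example with $\mathbb{Z}_2$ acting on an interval given just above the statement.)

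\textbf{The hard direction.} The substantial part is showing that if the action is good, then $X/G$ is a simplicial cell complex, i.e., the boundary $\partial O_\sigma$ of every orbit $O_\sigma$ is isomorphic to the boundary complex of a simplex of the appropriate dimension. The key tool is \Cref{lem:good_scc}: goodness is equivalent to condition $(i)$, namely that whenever $\alpha$ is a cell of $\sigma \cap g(\sigma)$, then $g(\alpha) = \alpha$. I would first argue that, under goodness, the stabiliser of a cell $\sigma$ acts trivially on $\sigma$ and all its faces: if $g(\sigma) = \sigma$ then $\sigma \cap g(\sigma) = \sigma$, so by $(i)$ every face $\alpha \leq \sigma$ satisfies $g(\alpha) = \alpha$, and in particular $g$ fixes every vertex of $\sigma$; since $\sigma$ is a cell, $g|_\sigma$ is determined by its action on vertices, so $g|_\sigma = \operatorname{id}$. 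Consequently, the natural map sending a face $\alpha \leq \sigma$ to its orbit $O_\alpha$ is a bijection from $\partial\sigma$ onto $\partial O_\sigma$: it is clearly surjective (any element below $O_\sigma$ is the orbit of something below some $G$-translate of $\sigma$, hence the orbit of a face of $\sigma$ after applying a suitable group element), and it is injective precisely because two faces $\alpha, \beta \leq \sigma$ with $O_\alpha = O_\beta$ would mean $\beta = g(\alpha)$ with $g(\alpha), \alpha$ both faces of $\sigma$, so by $(i)$ $\alpha = g(\alpha) = \beta$. Finally I would check this bijection is an order isomorphism: order-preservation is immediate from the definition of $\leq$ on $Y/G$, and order-reflection follows from the same injectivity argument applied to the relevant faces. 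Thus $\partial O_\sigma \cong \partial\sigma \cong$ boundary of a simplex, so $O_\sigma$ is a genuine cell and $X/G$ is a simplicial cell complex.

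\textbf{Main obstacle.} The subtle point — and the place where condition $(i)$ of \Cref{lem:good_scc} does the real work — is the injectivity/order-reflection of the map $\partial\sigma \to \partial O_\sigma$. A priori, $\sigma \cap g(\sigma)$ need not be a single cell of $X$ (this is flagged explicitly in the bracketed remark in \Cref{lem:good_scc}$(i)$), so one must be careful not to assume more structure on the intersection than is available; the argument must stay at the level of individual faces $\alpha \leq \sigma$ and apply $(i)$ pointwise. I would also need to verify that the orbit $O_\sigma$ genuinely has dimension $\dim(\sigma)$ and that its vertex set has the correct cardinality $\dim(\sigma)+1$ — but this follows from the bijection restricting to vertices together with the observation (already made in the text preceding the lemma) that $\dim(Y/G) = \dim(Y)$ and that orbits of $i$-cells are $i$-cells of $Y/G$.
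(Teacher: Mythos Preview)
Your proposal is correct and follows essentially the same approach as the paper, though you spell out more detail. For the ``if'' direction, the paper argues more tersely: it observes directly from the definition of goodness (not via condition~$(i)$ of \Cref{lem:good_scc}) that distinct vertices $a,b$ of any cell $\alpha$ lie in distinct $G$-orbits, since $a,b$ are endpoints of the edge $\alpha(\{a,b\})$. From injectivity of $\rho$ on $V(\alpha)$ the paper then jumps straight to ``$\rho(\alpha)$ is a cell whose boundary is isomorphic to $\partial\alpha$'', implicitly using that faces of a simplicial cell are determined by their vertex sets. Your explicit verification that $\partial\sigma \to \partial O_\sigma$ is an order isomorphism via condition~$(i)$ is a more careful unpacking of the same step, and your invocation of \Cref{lem:good_scc} buys you a cleaner handling of the face-level bijection, whereas the paper's version is shorter but leaves that bijection tacit.
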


\begin{proof}
First assume that the action of $G$ on $X$ is good. Since $X$ is a CW-complex, it follows that $X/G$ is a CW-complex as well. Let $\rho: X \to X/G$ be the canonical projection. Let $A\in X/G$. Then $A = \rho(\alpha)$ for some simplex $\alpha\in X$. If $a, b$ are two vertices of $\alpha$, then $a, b$ are vertices of the edge $e :=\alpha(\{a, b\}) \leq \alpha$. Since the action of $G$ on $X$ is good, it follows that orbits of $a$ and $b$ are distinct. This means $\rho(a) \neq \rho(b)$ for each pair of vertices $a, b$ of $\alpha$. Therefore, $A = \rho(\alpha)$ is a cell whose boundary is isomorphic to the boundary of $\alpha$. This implies that $X/G$ is a simplicial cell complex and the dimension of $X/G$ is the same as that of $X$. 

Conversely, suppose that the action of $G$ on $X$ is not good. Then there exist vertices $x, y$ of $X$ in the same $G$-orbit which are vertices of an edge $e$ of $X$. This implies that the 1-cell $\rho(e)$ is a loop in $X/G$. Therefore $X/G$ not regular and hence not a simplicial cell complex. This completes the proof. 
\end{proof}

For a simplicial cell complex $X$, let $X^{\prime}$ be its derived subdivision. So, if $\alpha \in V(X^{\prime})$, then $\alpha \in X$ and $V(\operatorname{Lk}_{X^{\prime}}(\alpha)) = \{\beta\in X \, : \, \beta < \alpha\} \sqcup \{\gamma\in X \, : \, \alpha\leq \gamma\}$. The group $\operatorname{Aut}(X)$ acts canonically on $X^{\prime}$ (namely, for $ \{\alpha_1 < \cdots < \alpha_k\}\in X^{\prime}$ and $g\in G$, $g(\{\alpha_1 < \cdots < \alpha_k\}) := \{g(\alpha_1) < \cdots < g(\alpha_k)\}$). Hence, we can assume that $\operatorname{Aut}(X) \leq \operatorname{Aut}(X^{\prime})$. In general, $\operatorname{Aut}(X) \neq\operatorname{Aut}(X^{\prime})$. For example, if $S^1_2$ is the simplicial cell complex with two vertices and two edges, then $\operatorname{Aut}(S^1_2) \cong \mathbb{Z}_2\times \mathbb{Z}_2$.  Whereas $(S^1_2)^{\prime}$ is a 4-cycle whose automorphism group is the dihedral group $D_8$ with eight elements.

\begin{lemma} \label{lem:good}
 Let $X$ be a simplicial cell complex with first derived subdivision $X^{\prime}$, and let $G \leq \operatorname{Aut}(X)$. If the action of $G$ on $X$ is good, then the action of $G$ on the simplicial complex $X^{\prime}$ is also good. 
\end{lemma}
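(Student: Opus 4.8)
The plan is to verify the two conditions in Definition~\ref{def:good_sc} for the action of $G$ on $X'$, using as a black box the characterisation of goodness for simplicial cell complexes provided by Lemma~\ref{lem:good_scc}$(i)$. Recall that a vertex of $X'$ is a cell $\alpha \in X$, and that two vertices $\alpha, \beta$ of $X'$ span an edge precisely when one of $\alpha, \beta$ is a face of the other in $X$ (i.e. $\alpha < \beta$ or $\beta < \alpha$); more generally a face of $X'$ is a chain in $X$.

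First I would check the \emph{non-edge} condition. Suppose $\alpha \neq \beta$ are vertices of $X'$ in the same $G$-orbit, so $\beta = g(\alpha)$ for some $g \in G$, and suppose for contradiction that $\alpha\beta$ is an edge of $X'$; without loss of generality $\alpha < \beta = g(\alpha)$ in $X$. Now I want to derive a contradiction from the goodness of $G$ on $X$. The idea is to take any cell $\sigma \in X$ with $\beta \leq \sigma$ (e.g. $\sigma = \beta$ itself), and observe that $\alpha = g^{-1}(\beta) \leq g^{-1}(\sigma)$, while also $\alpha < \beta \leq \sigma$, so $\alpha$ is a cell in $\sigma \cap g^{-1}(\sigma)$; applying Lemma~\ref{lem:good_scc}$(i)$ with $g^{-1}$ in place of $g$ gives $g^{-1}(\alpha) = \alpha$, i.e. $g(\alpha) = \alpha = \beta$, contradicting $\alpha \neq \beta$. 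Hence $\alpha\beta$ is a non-edge of $X'$.

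Second I would establish the \emph{fixing} condition: given $\alpha \neq \beta$ in a common $G$-orbit, I must produce $g \in G$ with $g(\alpha) = \beta$ fixing every vertex of $\operatorname{Lk}_{X'}(\alpha) \cap \operatorname{Lk}_{X'}(\beta)$. Take any $g$ with $g(\alpha) = \beta$. A vertex in $\operatorname{Lk}_{X'}(\alpha) \cap \operatorname{Lk}_{X'}(\beta)$ is a cell $\gamma \in X$ that is incident (in $X$) to \emph{both} $\alpha$ and $\beta$ — so for each such $\gamma$ either $\gamma \leq \alpha$ and $\gamma \leq \beta$, or $\gamma \geq \alpha$ and $\gamma \geq \beta$, or one of the mixed cases $\gamma \leq \alpha, \gamma \geq \beta$ (forcing $\beta \leq \gamma \leq \alpha$) and symmetrically. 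In every case $\gamma$ is incident to a common cell: in the first case $\gamma$ is a common face of $\alpha$ and $\beta$, so $\gamma \leq \alpha$ and $\gamma = g^{-1}(g(\gamma)) $ with $g(\gamma) \leq g(\alpha) = \beta \leq$ some facet $\sigma$ together with $\gamma \leq \sigma$ exhibits $g(\gamma)$... — more cleanly: since $\gamma \leq \beta = g(\alpha)$ and $\gamma \leq \alpha$, we can pick a facet $\sigma \geq \alpha$ of $X$; then $\gamma \leq \alpha \leq \sigma$ and $\gamma \leq \beta = g(\alpha) \leq g(\sigma)$, so $\gamma$ is a cell in $\sigma \cap g(\sigma)$, whence Lemma~\ref{lem:good_scc}$(i)$ gives $g(\gamma) = \gamma$. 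The mixed case $\beta \leq \gamma \leq \alpha$ is handled the same way with $\sigma$ a facet $\geq \alpha$: then $\gamma \leq \alpha \leq \sigma$ and $g(\gamma) \geq g(\beta) $... here one instead uses that $\beta \leq \gamma$ and $\beta = g(\alpha) \leq g(\gamma)$, so $\gamma, g(\gamma)$ are both in $\operatorname{Lk}_X(\beta)$; applying the same reasoning inside the pure simplicial cell complex $\operatorname{Lk}_X(\beta)$ (cf. the remarks on links preceding Lemma~\ref{lem:quotient}), or directly with a facet through both, again yields $g(\gamma) = \gamma$. The remaining case $\alpha \leq \gamma \geq \beta$ is symmetric. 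So every vertex of $\operatorname{Lk}_{X'}(\alpha) \cap \operatorname{Lk}_{X'}(\beta)$ is fixed by $g$, as required.

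The main obstacle I expect is the careful case analysis of how a cell $\gamma \in X$ can be simultaneously incident to both $\alpha$ and $\beta$: unlike in a simplicial complex, $\operatorname{Lk}_{X'}(\alpha)$ contains both faces and cofaces of $\alpha$, so the intersection of the two links mixes "below" and "above" relations, and one must confirm that in each configuration $\gamma$ and $g(\gamma)$ lie in a common cell of $X$ (or of a suitable link) so that Lemma~\ref{lem:good_scc}$(i)$ applies. Once one packages the statement "any two incident cells lie in a common facet" (pureness of $X$ and its links) and feeds the relevant pair into part $(i)$ of Lemma~\ref{lem:good_scc}, each case collapses to a one-line argument; the bookkeeping, not the mathematics, is where care is needed.
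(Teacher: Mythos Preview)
Your non-edge argument works but is harder than necessary: since $\beta=g(\alpha)$, the cells $\alpha$ and $\beta$ have the same dimension, so neither can be a proper face of the other, and $\alpha\beta$ is automatically a non-edge of $X'$. (This is exactly how the paper argues.) Once you know $\dim\alpha=\dim\beta$, the ``mixed'' cases $\beta\leq\gamma\leq\alpha$ and $\alpha\leq\gamma\leq\beta$ are impossible, so your attempted treatment of them is moot.

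The genuine gap is the case $\alpha\leq\gamma$ and $\beta\leq\gamma$ (a common coface), which you dismiss as ``symmetric''. It is not symmetric to anything you have actually proved: here you would need $\gamma$ to lie in some $\sigma\cap g(\sigma)$ to invoke Lemma~\ref{lem:good_scc}$(i)$, and there is no obvious such $\sigma$; knowing only that $\gamma$ and $g(\gamma)$ both sit in $\operatorname{Lk}_X(\beta)$ does not force $g(\gamma)=\gamma$. The paper handles this case by showing it \emph{cannot occur} (its Claim~1): if $\alpha,\beta\leq\gamma$ then, since $V(\alpha)\neq V(\beta)$ but $\#V(\alpha)=\#V(\beta)$, there is $x\in V(\alpha)\setminus V(\beta)$ with $g(x)\in V(\beta)$, and then $x\neq g(x)$ are both vertices of $\gamma$ and hence joined by the edge $\gamma(\{x,g(x)\})$, contradicting goodness on $X$. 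Alternatively, your own non-edge trick does the job: from $\alpha\leq\gamma$ and $\alpha=g^{-1}(\beta)\leq g^{-1}(\gamma)$ one gets $\alpha$ in $\gamma\cap g^{-1}(\gamma)$, so Lemma~\ref{lem:good_scc}$(i)$ gives $g^{-1}(\alpha)=\alpha$, whence $\alpha=\beta$, a contradiction. Either way, once this case is excluded the only surviving possibility is $\gamma\leq\alpha$ and $\gamma\leq\beta$, and your argument there (which is also the paper's) is fine --- and no passage to facets is needed, since $\sigma=\alpha$ already witnesses $\gamma\in\alpha\cap g(\alpha)$.
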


\begin{proof}
Suppose two vertices $\alpha$ and $\beta$ of $X^{\prime}$ are in the same $G$-orbit. Let $\beta = g(\alpha)$, where $g\in G$. Then $\alpha$ and $\beta$ are cells of $X$ of the same dimension. Hence, $\alpha\not < \beta$ and $\beta\not < \alpha$ as cells of $X$. This implies that $\alpha\beta$ is a non-edge of $X^{\prime}$. 

\medskip

\noindent {\em Claim 1}: There does not exist $\gamma\in X$ such that $\alpha\leq \gamma$ and $\beta\leq \gamma$. 

\smallskip 

If possible, suppose there exists $\gamma\in X$ such that $\alpha\leq \gamma$, $\beta\leq \gamma$. Since a subset $A \subseteq V(\gamma)$ uniquely determines the face $\gamma(A)$ of $\gamma$ whose vertex set is $A$, it follows that $V(\alpha) \neq V(\beta)$. Since $g(\alpha) = \beta$, we have $g(V(\alpha)) = V(\beta)$ and hence $\#(V(\alpha)) = \#(V(\beta))$. These imply that there exists $x\in V(\alpha)\setminus V(\beta)$. Then $g(x)\in V(g(\alpha)) = V(\beta)$, and hence $g(x)\not = x$. Now, $x \leq \alpha \leq \gamma$ and $g(x) \leq \beta\leq \gamma$. Thus, $g(x)$ and $x$ are two distinct vertices of $\gamma$ and hence are vertices of the edge $\gamma(\{x, g(x)\})$. This is not possible since the action of $G$ on $X$ is good. This proves Claim 1. 

 \smallskip 
 
Now, suppose $\nu\in V(\textrm{lk}_{X^{\prime}}(\alpha)) \cap V(\textrm{lk}_{X^{\prime}}(\beta))$. By Claim 1, 
at least one of $\alpha$ and $ \beta$ is not a face of $\nu$.  Assume, without loss, that $\alpha\not \leq \nu$. 
Since $\nu \in V(\operatorname{Lk}_{X^{\prime}}(\alpha))$, it follows from the definition of the derived subdivision that $\nu < \alpha$ and hence $\#(V(\nu)) < \#(V(\alpha))$. 
Since $\alpha$ and $\beta$ are cells of the same dimension, we have  $\#(V(\alpha)) = \#(V(\beta))$ and hence $\#(V(\nu)) < \#(V(\beta))$. 
This implies that $\beta \not\leq \nu$. Again, since $\nu \in V(\operatorname{Lk}_{X^{\prime}}(\beta))$, it follows that $\nu < \beta$. Thus, $\nu$ is a cell  in $\alpha \cap \beta= \alpha \cap g(\alpha)$. Since the action of $G$ on $X$ is good, by Lemma \ref{lem:good_scc}, it follows that $g(\nu)=\nu$. Thus, $g$ fixes all the vertices in $\textrm{lk}_{X^{\prime}}(\alpha) \cap \textrm{lk}_{X^{\prime}}(\beta)$. Hence, the action of $G$ on the simplicial complex $X^{\prime}$ is good. 
\end{proof}

We have the following consequence of Lemma \ref{lem:good}. 

\begin{corollary} \label{cor:good}
 Let $X$ be a simplicial cell complex, and let $G \leq \operatorname{Aut}(X)$. 
 If the action of $G$ on $X$ is good, then $\left | X/G \right | \cong |X|/G$. 
\end{corollary}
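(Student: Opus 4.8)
The plan is to deduce \Cref{cor:good} from \Cref{lem:good} together with the already-established \Cref{prop:good} (Corollary 2.7 in \cite{BD12}), by passing to the derived subdivision. Concretely, suppose $X$ is a simplicial cell complex, $G \leq \operatorname{Aut}(X)$, and the action of $G$ on $X$ is good. Form the first derived subdivision $X^{\prime}$, which by the discussion in \Cref{sec:quotient} is a genuine simplicial complex, and recall that $\operatorname{Aut}(X)$ acts canonically on $X^{\prime}$, so we may regard $G \leq \operatorname{Aut}(X^{\prime})$. By \Cref{lem:good}, the induced action of $G$ on the simplicial complex $X^{\prime}$ is good in the sense of \Cref{def:good_sc}. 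Hence \Cref{prop:good} applies and gives $|X^{\prime}/G| \cong |X^{\prime}|/G$.

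The remaining work is to rewrite both sides of this homeomorphism in terms of $X$ rather than $X^{\prime}$. For the left side, I would argue that $X^{\prime}/G$ is (isomorphic as a poset to) the derived subdivision $(X/G)^{\prime}$ of the quotient poset $X/G$. To see this, first note that by \Cref{lem:quotient} the goodness of the $G$-action on $X$ guarantees that $X/G$ is itself a simplicial cell complex, so $(X/G)^{\prime}$ is defined. Then one checks that the canonical projection $\rho\colon X \to X/G$ induces a bijection between $G$-orbits of chains $\emptyset = \alpha_0 < \alpha_1 < \cdots < \alpha_k$ in $X$ and chains $\emptyset = \rho(\alpha_0) < \rho(\alpha_1) < \cdots < \rho(\alpha_k)$ in $X/G$, compatible with the sub-chain ordering; this is exactly the content of ``$X^{\prime}/G \cong (X/G)^{\prime}$'' as posets, and since a simplicial complex is determined up to homeomorphism by its face poset, we get $|X^{\prime}/G| \cong |(X/G)^{\prime}| = |X/G|$, using that a complex and its derived subdivision have the same geometric carrier. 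For the right side, $|X^{\prime}|$ is canonically homeomorphic to $|X|$ via the standard barycentric-subdivision homeomorphism, and this homeomorphism is $\operatorname{Aut}(X)$-equivariant (it is natural in $X$), hence $G$-equivariant, so it descends to a homeomorphism $|X^{\prime}|/G \cong |X|/G$. Chaining the three identifications yields $|X/G| \cong |X^{\prime}/G| \cong |X^{\prime}|/G \cong |X|/G$, as desired.

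The step I expect to be the main obstacle — or at least the one requiring the most care — is the poset-level identification $X^{\prime}/G \cong (X/G)^{\prime}$. The subtlety is that forming orbits and forming chains do not obviously commute: a priori two chains in $X$ could have the same projected chain in $X/G$ without lying in a common $G$-orbit, or a chain in $X/G$ might fail to lift to a chain in $X$. Goodness is precisely what rules out the first pathology (distinct vertices of a common cell lie in distinct orbits, so $\rho$ is injective on the vertex set of each cell and more generally order-reflecting on faces of a fixed cell), and the fact that $\rho$ is a poset surjection taking cells to cells of the same dimension handles lifting. I would organize this as a short lemma-style verification: $\rho$ maps the chain poset of $X$ onto the chain poset of $X/G$, two chains have the same image iff they are in the same $G$-orbit, and the sub-chain relation is preserved and reflected on orbits. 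Once that is in place, everything else is bookkeeping with the already-cited facts (regularity, face posets determining homeomorphism type, naturality of derived subdivisions).
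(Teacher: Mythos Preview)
Your proposal is correct and follows essentially the same route as the paper: pass to the derived subdivision, invoke \Cref{lem:good} and then \Cref{prop:good} to get $|X^{\prime}/G|\cong |X^{\prime}|/G$, identify $X^{\prime}/G$ with $(X/G)^{\prime}$ (using \Cref{lem:quotient} to ensure $X/G$ is a simplicial cell complex), and finish with $|X^{\prime}|=|X|$ and $|(X/G)^{\prime}|=|X/G|$. The paper establishes $X^{\prime}/G\cong (X/G)^{\prime}$ by noting both are simplicial complexes on the common vertex set of $G$-orbits of cells of $X$ and checking that their face sets coincide; your chain-lifting formulation is the same verification in slightly different language, and your discussion of the potential pathologies (and why goodness excludes them) is, if anything, more explicit than the paper's.
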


\begin{proof}
Since $X$ is a simplicial cell complex, its first derived subdivision $X^{\prime}$ is a simplicial complex and hence $X^{\prime}/G$ is a simplicial complex. Since the action $G$ on $X$ is good, by Lemma \ref{lem:quotient}, $X/G$ is a simplicial cell complex and hence $(X/G)^{\prime}$ is a simplicial complex. 

Since $G \leq \operatorname{Aut}(X)$ we have $G \leq \operatorname{Aut}(X^{\prime})$. 
Observe that $V(X^{\prime}/G)$ is the set of vertices of $X^{\prime}/G$ and hence the set of $G$-orbits of vertices of $X^{\prime}$. This in particular implies that $V(X^{\prime}/G)$ is the set of $G$-orbits of cells of $X$, and thus equal to $V((X/G)^{\prime})$. Moreover, the identity map from $V(X^{\prime}/G)$ to $V((X/G)^{\prime})$ gives an isomorphism from $X^{\prime}/G$ to $(X/G)^{\prime}$ in the following way:
$A\in X^{\prime}/G$ $\Longrightarrow$ $A=\{O_{\alpha_1}, \dots, O_{\alpha_k}\}$ for some cells $\alpha_1, \dots, \alpha_k\in X$, where $\alpha_1< \cdots < \alpha_k$
$\Longrightarrow$ $O_{\alpha_1}, \dots, O_{\alpha_k}\in X/G= V((X/G)^{\prime})$ and $O_{\alpha_1}< \cdots < O_{\alpha_k}$
$\Longrightarrow$ $\{O_{\alpha_1}, \dots, O_{\alpha_k}\}\in (X/G)^{\prime}$ $\Longrightarrow$ $A\in (X/G)^{\prime}$. Similarly, $A\in (X/G)^{\prime}$ $\Longrightarrow$ $A\in X^{\prime}/G$. Thus, $X^{\prime}/G \cong (X/G)^{\prime}$ and hence $|X^{\prime}/G| \cong |(X/G)^{\prime}|$. 

Now, by Lemma \ref{lem:good}, the action of $G$ on $X^{\prime}$ is good. Hence, by Proposition \ref{prop:good}, $|X^{\prime}|/G \cong |X^{\prime}/G|$. 
Therefore, $|X|/G = |X^{\prime}|/G \cong |X^{\prime}/G| \cong |(X/G)^{\prime}| =|X/G|$. This completes the proof. 
\end{proof}

We end this section with the following beautiful example of good action of a group on a very well known simplicial cell complex. 

\begin{example}\label{exam:RP^n}
For $n\geq 1$, let $\rho : \mathbb{S}^n\to \mathbb{S}^n$ be the antipodal map. Then $\mathbb{S}^n/\langle\rho\rangle$ is the real projective space $\mathbb{RP}^n$. Let $K = S^0_2(\{u_0, v_0\})\ast\cdots\ast S^0_2(\{u_{n}, v_{n}\})$ be the join of $n+1$ copies of $S^0_2$. (This $K$ is known as the boundary of the $(n+1)$-dimensional cross polytope.) Then $K$ is a triangulation  of $\mathbb{S}^n$. The map $\rho$ induces an order two automorphism $\alpha$ of $K$ given by $\alpha|_{V(K)}=(u_0, v_0)\cdots(u_{n},v_{n})$. More explicitly,  for a simplex $x_{i_0}\cdots x_{i_k}\in K$ of dimension $k\leq n$, 
$\alpha(x_{i_0}\cdots x_{i_k}) = y_{i_0}\cdots y_{i_k}$, where $\{x_{i_j}, y_{i_j}\} = \{u_{i_j}, v_{i_j}\}$ for all $j$. Let $G :=\langle\alpha\rangle\leq \operatorname{Aut}(K)$. 
Then, in the category of simplicial complexes, the action of $G$ on the simplicial complex $K$ is not good and as a simplicial complex $K/G$ is (the closure of) an $n$-simplex.

In the category of simplicial cell complexes, however, the action of $G$ on the simplicial cell complex $K$ is good and $K/G$ is a  $(n+1)$-vertex simplicial cell structure of $\mathbb{RP}^n$. In particular, this produces a crystallization (simplicial cell structure with the smallest possible number of vertices) of $\mathbb{RP}^n$ for all $n > 0$.
\end{example}

\begin{remark}\label{rem:natural}
Lemma \ref{lem:quotient} shows that the good actions on the category of simplicial cell complexes are natural actions.
\end{remark}

\section{Simplicial cell decompositions of $(\mathbb{S}^2)^{n}$}\label{sec:scdecom}

In this section we present a very natural simplicial cell decomposition of the $n$-fold Cartesian power $(\mathbb{S}^2)^{n}$ of the $2$-sphere $\mathbb{S}^2$.

For this, consider the following well-known method to simplicially subdivide the $n$-fold Cartesian product of a triangle: Let the triangle be $\Delta=x_1x_2x_3$. Then the vertices of the cell $\Delta^n$ are $x_{i_1\cdots i_n} := (x_{i_1}, \dots, x_{i_n})$, $i_j\in \{1, 2, 3\}$. Take a subdivision of an $n$-cube $C^n= [1, 3]^n$ into $2^n$ sub-cubes. Let $C^{\prime}_n$ be the union of all these $2^n$ cubes, i.e., $C_n^{\prime} = ([1,2]\cup[2,3])^n$. The vertices of $C^{\prime}_n$ are $(i_1, i_2, \dots, i_n)$, where $i_j\in\{1, 2, 3\}$ for $1\leq j\leq n$. We identify $(i_1, i_2, \dots, i_n)$ with $x_{i_1i_2\cdots i_n}$.  A (maximal) monotone path from $x_{11\cdots 1}=(1, 1, \dots, 1)$ to $x_{33\cdots 3}=(3, 3, \dots, 3)$ in the edge graph of $C^{\prime}_n$ is a path of the form $x_{11\cdots1}\mbox{-}\cdots\mbox{-}x_{i_1i_2\cdots i_n}\mbox{-} x_{j_1j_2\cdots j_n}\mbox{-} \cdots\mbox{-} x_{33\cdots3}$, where $i_k\leq j_k$ for all $k$, and $j_1+\cdots + j_n=i_1+\cdots + i_n+1$. The facets of the simplicial subdivision of $\Delta^n$ are given by the vertex sets of maximal monotone paths. Such a path contains $2n$ edges and hence $2n+1$ vertices, and hence these facets are, indeed, of dimension $2n$ (see \Cref{fig:three} for an illustration of the case $n=2$). This type of subdivision is called {\em staircase subdivision}, and {\em path-simplex subdivision}  (see \cite[Page 382]{BCS88}, \cite[Page 67, Definition 8.8]{ES52} and \cite{LS17} for more details). To calculate the number of facets in this simplicial subdivision of $\Delta^n$, observe that, to reach $(3, \ldots , 3)$ from $(1 , \ldots , 1)$, we must follow every dimension of the cube exactly twice. There are ${2n \choose 2}$ possibilities to decide when to go along the first dimension, ${ 2n-2 \choose 2}$ options for the second, etc. It follows that we have $\prod \limits_{j=0}^{n-1} \binom{2(n-j)}{2}=\frac{(2n)!}{2^n}$ facets in the subdivision. 

\begin{figure}[htb]
\includegraphics[width=\textwidth]{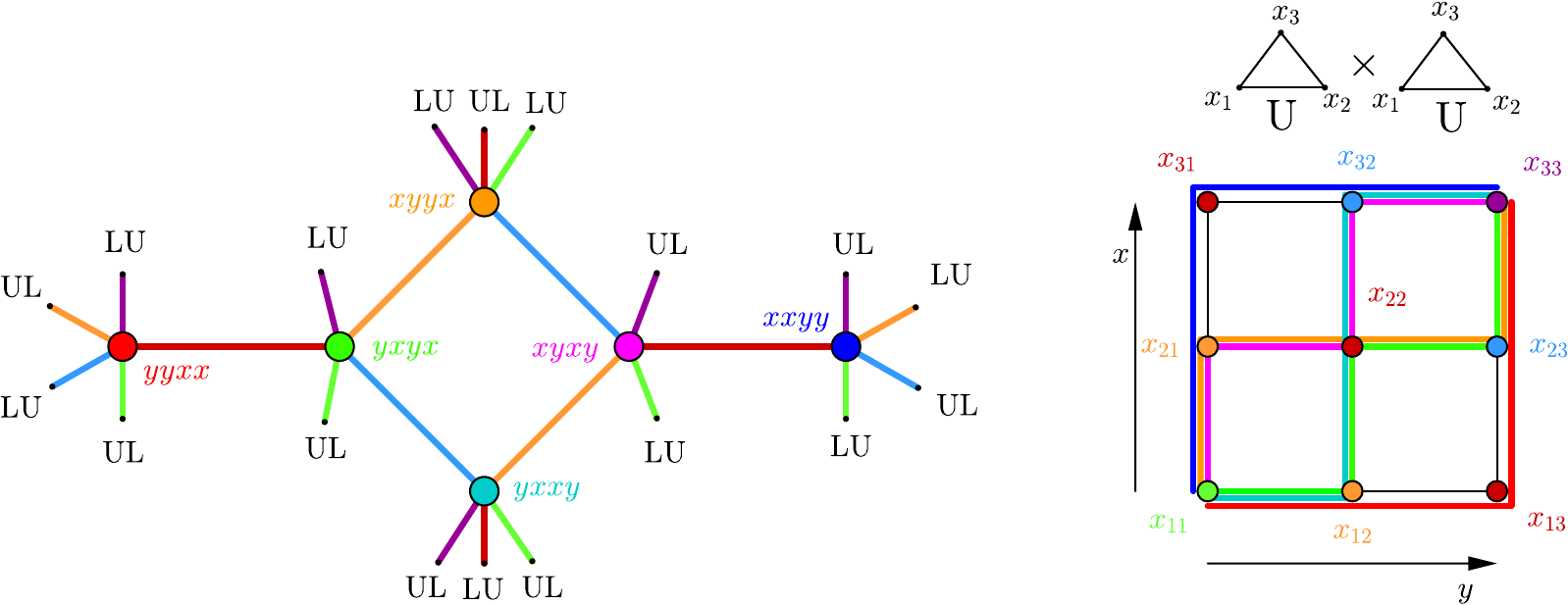}
\caption{The dual graph of the subdivision of $\Delta^2 \times \Delta^2$ into $6$ simplices of dimension $4$. Notice that this subdivision is a graph encoding.\label{fig:three}}
\end{figure}

\medskip

Let $S^2_3$ be the standard (unique) $3$-vertex $2$-triangle crystallisation of $\mathbb{S}^2$ (a pair of triangles, identified along their boundaries). We refer to the two triangles of $S^2_3$ as $U$ (for upper) and $L$ (for lower). Taking the $n$-fold topological Cartesian product of $S^2_3$ with itself, we obtain a cell decomposition $\widetilde{ X}^n$ of $(\mathbb{S}^2)^{n}$ with $2^n$ facets. Each facet is an $n$-fold Cartesian product of triangles and is uniquely described by a binary word whose letters are in $\{U,L\}$. We identify the facet $A_1\times\cdots\times A_n$ with the word $A_1\cdots A_n$, where $A_1, \dots, A_n \in \{U, L\}$. We have for the boundary of $\partial (A_1\cdots A_n) = \cup_{k=1}^n (A_1\times\cdots\times A_{k-1}\times (\partial A_k)\times A_{k+1} \times\cdots \times A_n)$. Thus, a facet has exactly $3n$ boundary ridges, each of them a Cartesian product of $n-1$ triangles and one edge. Moreover, the facets $A_1 \ldots A_n$ and $B_1 \cdots B_n$ share exactly three ridges if and only if $A_i = B_i$ for all but exactly one index $j = 1, \ldots , n$, and $A_j \neq B_j$. Otherwise they share none. These three ridges are exactly the ones determined by the $3$ edges of the $j$-th triangle. 

\medskip

We simplicially subdivide each cell of the complex $\widetilde{X}^n$ into ${(2n)!}/{2^n}$ facets, using the monotone path method given above. If carried out in all $2^n$ maximal cells, we obtain a total of $2^n \times ((2n)!/2^n) = (2n)!$ facets for the subdivision of $\widetilde{X}^n$. The subdivision is consistent on the ridges of $\widetilde{X}^n$ for the following reason: Let the vertices of $S^2_3$ be $x_1, x_2, x_3$. Moreover, let $F = A_1\cdots A_n$ be a maximal cell of $\widetilde{X}^n$, and let $F^{\prime}$ be its subdivision into simplices. Naturally, $F$ contains all $3^n$ vertices of $\widetilde{X}^n$. Each of its ridges $F_{\ell; i, j} = A_1 \times \cdots \times A_{\ell-1}\times x_ix_j\times A_{\ell+1}\times \cdots\times A_n$, $1\leq i<j\leq 3$ and $1\leq \ell\leq n$ is incident to exactly one other maximal cell $E = A_1 \cdots A_{\ell-1} B_{\ell} A_{\ell + 1} \cdots A_n$, where $\{A_{\ell}, B_{\ell}\} = \{U, L\}$.  By construction, we have $F_{\ell; i, j} = E_{\ell; i, j}$, and the restriction of the simplicial subdivision $F^{\prime}$ of $F$ on $F_{\ell; i, j} $ is equal to the restriction of the simplicial subdivision $E^{\prime}$ of $E$ on $F_{\ell; i, j}$. To see this, note that the $(2n-1)$-simplices in $F_{\ell; i, j}$ are the maximal monotone paths in the edge graph of the subdivision $C^n_{\ell;  i, j} = ([1, 2] \cup [2, 3])^{\ell - 1} \times [i, j] \times ([1, 2] \cup [2,3])^{n - \ell}$ of the $n$-dimensional rectangular parallelepiped (cube if $(i,  j) = (1, 3)$) $[1,3]^{\ell - 1} \times [i, j] \times [1, 3]^{n - \ell}$.

Altogether this yields the following.

\begin{theorem}\label{theo:scdX^n}
The simplicial cell complex
	\begin{align*}
X^n := &\{W_F \, : W \subseteq V(P), P \mbox{ is a maximal monotone path in the edge graph} \\
& \mbox {of } C^{\prime}_n, ~ V(P) \mbox{ is the vertex set of } P, \mbox{ and } F \mbox{ a maximal cell of } \widetilde{X}^n\} 
\end{align*}
is a simplicial cell decomposition of $({\mathbb S}^2)^n$. 
\end{theorem}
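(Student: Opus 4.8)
The plan is to realise $X^n$ concretely as the subdivision of $\widetilde{X}^n$ obtained by applying the staircase subdivision inside each top-dimensional cell, then to check that these local subdivisions agree on the overlaps dictated by the cell structure of $\widetilde{X}^n$, and finally to observe that the resulting object has the same underlying space as $\widetilde{X}^n$ while each of its cells is a genuine simplex.

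First I would record that $|\widetilde{X}^n| = |S^2_3|^n = (\mathbb{S}^2)^n$, since $\widetilde{X}^n$ is by construction the $n$-fold Cartesian product of the cell complex $S^2_3$ and $|S^2_3| = \mathbb{S}^2$. Next, fix a maximal cell $F = A_1\cdots A_n$ of $\widetilde{X}^n$: it is a product of $n$ triangles, each with the vertex order $x_1 < x_2 < x_3$. By the classical theory of the staircase (path-simplex) triangulation of a product of simplices, cf. \cite[p.~382]{BCS88}, \cite[Def.~8.8]{ES52}, \cite{LS17}, the subsets $W$ of vertex sets of maximal monotone paths in the grid $\{1,2,3\}^n$, together with their spans inside $F$, form a geometric simplicial complex $F'$ with $|F'| = |F|$, pure of dimension $2n$; in particular every $W_F$ is an affinely independent set (hence a genuine simplex), and $\{W_F : W\subseteq V(P)\}$ is exactly the set of faces of $F'$.

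The crux is compatibility of these subdivisions. The discussion preceding the statement already establishes that two maximal cells $F$ and $E$ either share no ridge, or differ in exactly one coordinate $\ell$ and then share the three ridges cut out by the three edges of the $\ell$-th triangle, on each of which $F'$ and $E'$ restrict to the very same staircase subdivision of a rectangular parallelepiped. I would upgrade this to agreement on an arbitrary common face. Let $G = B_1\times\cdots\times B_n \leq F = A_1\cdots A_n$ with also $G\leq E = D_1\cdots D_n$, and put $S = \{i : A_i\neq D_i\}$. For $i\in S$ the factor $B_i$ lies in the common boundary circle $\partial U = \partial L$ of $S^2_3$, hence inside an edge of $S^2_3$. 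Flipping the coordinates in $S$ one at a time produces a chain $F = H_0, H_1, \dots, H_{|S|} = E$ of maximal cells, each still containing $G$ and with consecutive members differing in a single coordinate; for each step I would pick one of its three shared ridges that contains $G$, so that $H_k'|_G = H_{k+1}'|_G$ by the ridge case, and chaining gives $F'|_G = E'|_G$. (Equivalently, one may invoke the naturality of the staircase triangulation with respect to face inclusions: $F'|_G$ is the staircase subdivision of the product $B_1\times\cdots\times B_n$ and therefore depends only on $G$.)

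Given compatibility, the simplicial complexes $\{F'\}$ over all maximal cells $F$ glue, along the subcomplex of their common faces, to a regular CW complex whose face poset is $X^n$ and whose underlying space is $\bigcup_F |F| = |\widetilde{X}^n| = (\mathbb{S}^2)^n$; since every cell of $X^n$ is a geometric simplex, the boundary of each cell is isomorphic as a poset to the boundary of a simplex, so $X^n$ is a simplicial cell complex, and thus a simplicial cell decomposition of $(\mathbb{S}^2)^n$. I expect the only real friction to be in the compatibility step — precisely in making agreement across codimension-one faces propagate to common faces of arbitrary codimension, which is exactly what the chain-of-maximal-cells argument (or the naturality of the staircase triangulation) delivers; the remaining ingredients are either classical or were set up in the paragraphs leading to the statement.
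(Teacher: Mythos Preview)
Your proposal is correct and takes essentially the same approach as the paper: the paper's proof is the discussion immediately preceding the theorem (culminating in ``Altogether this yields the following''), which describes the staircase subdivision of each top cell of $\widetilde{X}^n$ and verifies consistency on ridges. You recapitulate this and are in fact more careful than the paper, since you explicitly propagate ridge-compatibility down to arbitrary common faces via a chain of maximal cells (or naturality of the staircase triangulation), a step the paper leaves implicit.
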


\begin{example}
\label{exam:Xn}
We have the following $f$-vectors of $X^n$, for $n\in \{1,2,3,4\}$:
\begin{itemize}
  \item $f(X^1) = (3,3,2)$,
  \item $f(X^2) = (9, 27, 58, 60, 24)$,
  \item $f(X^3) = (27, 189, 926, 2460, 3504, 2520, 720)$,
  \item $f(X^4) = (81, 1215, 12130, 64860, 194280, 337680, 338400, 181440, 40320)$.
\end{itemize}
In general, $X^n$ has $3^n$ vertices, $(2n)!$ facets, and Euler characteristic $2^n$. Moreover, $X^n$ can be represented as a graph encoding. That is, as an edge-coloured dual graph containing all the information to recover the simplicial cell complex. This can be followed from the construction, but we do not particularly stress this in this article.
See \Cref{app:code} for code to produce these simplicial cell complexes in low-dimensional topology software {\em Regina} \cite{regina}.
\end{example}

\section{Simplicial cell decompositions of $\mathbb{CP}^{\hspace{.3mm}n}$}\label{sec:CPn}

In this section we use the simplicial cell decomposition $X^n$ of $(\mathbb{S}^2)^n$ from \Cref{sec:scdecom} to construct a simplicial cell decomposition of $\mathbb{CP}^n$. For this, we first define an action of the group $\operatorname{Sym}(n)$ on $X^n$ realising coordinate permutation, and then show that this action is good. The quotient $X^n / \operatorname{Sym}(n)$ is our required simplicial cell complex.

The natural action of the symmetric group $\operatorname{Sym}(n)$ of degree $n$ by coordinate permutation on $(\mathbb{S}^{2})^n$ is also an action on $(S^2_3)^n$ by coordinate permutation. This gives an action of $\operatorname{Sym}(n)$ on $\widetilde{X}^n$: if $\pi\in \operatorname{Sym}(n)$ and $A = \alpha_1\times\cdots \times \alpha_n$ is a cell (not necessarily facet) in $\widetilde{X}^n$ then $\pi(\alpha_1\times\cdots \times \alpha_n)= \alpha_{\pi(1)}\times\cdots \times \alpha_{\pi(n)}$. 

Let $C_n =[1, 3]^n$ and $C_n^{\prime} = ([1,2]\cup[2,3])^n$ be as in Section \ref{sec:scdecom}. In particular, the vertices of $C^{\prime}_n$ are $x_{i_1i_2\cdots i_n} :=(i_1, i_2, \dots, i_n)$, $i_j\in\{1, 2, 3\}$, for $1\leq j\leq n$. 
Then, $\operatorname{Sym}(n)$ acts on the vertices of $C_n^{\prime}$ (which are also the vertices of $\widetilde{X}^n$ and $X^n$) as $\pi(i_1, \dots, i_n) = (\ell_1, \dots, \ell_n)$, where $\ell_k= i_{\pi(k)}$, $\pi\in\operatorname{Sym}(n)$. 

Here we prove 

\begin{lemma}\label{lem:path} 
The action of $\operatorname{Sym}(n)$ on $V(X^n)$ induces an action of $\operatorname{Sym}(n)$ on the simplicial cell complex $X^{n}$. 
\end{lemma}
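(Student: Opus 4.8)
The plan is to exploit the fact that $\operatorname{Sym}(n)$ already acts cellularly on $\widetilde{X}^n$ by permuting Cartesian factors, and to check that this action is compatible with the monotone-path subdivision producing $X^n$. Recall that a cell of $X^n$ has the form $W_F$, where $F = A_1\cdots A_n$ (with each $A_k\in\{U,L\}$) is a facet of $\widetilde{X}^n$, $P$ is a maximal monotone path in the edge graph of $C_n'$, and $W\subseteq V(P)$; letting $W$ range over all subsets of $V(P)$ produces the simplex on $V(P)$ sitting inside $F$ together with all of its faces. For $\pi\in\operatorname{Sym}(n)$, the action on $\widetilde{X}^n$ gives $\pi(F) = A_{\pi(1)}\times\cdots\times A_{\pi(n)}$, which is again a facet of $\widetilde{X}^n$. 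Hence it suffices to prove that the induced vertex map $x_{i_1\cdots i_n}\mapsto x_{i_{\pi(1)}\cdots i_{\pi(n)}}$ carries every maximal monotone path $P$ in the edge graph of $C_n'$ to another such path $\pi(P)$. Granting this, $\pi(W)\subseteq V(\pi(P))$, so $\pi(W_F) = (\pi(W))_{\pi(F)}$ is again a cell of $X^n$; since $\pi$ is a bijection on vertices with $\#\pi(W)=\#W$ it preserves dimension, and a face of a cell is sent to the corresponding face of the image cell, so $\pi$ preserves incidence and induces a poset automorphism of $X^n$. That these automorphisms assemble into an action of $\operatorname{Sym}(n)$ (with the convention $\pi(i_1,\dots,i_n)=(i_{\pi(1)},\dots,i_{\pi(n)})$) is then routine.

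The core verification is therefore that $\pi$ sends maximal monotone paths to maximal monotone paths. For this I would first note that two vertices $x_{i_1\cdots i_n}$ and $x_{j_1\cdots j_n}$ of $C_n'=([1,2]\cup[2,3])^n$ are adjacent in its edge graph precisely when they agree in all coordinates but one, say the $k$-th, with $\{i_k,j_k\}\in\{\{1,2\},\{2,3\}\}$; this condition is invariant under permuting coordinates, so $\pi$ is a graph automorphism of the edge graph of $C_n'$. Next, the height $h(x_{i_1\cdots i_n}):=i_1+\cdots+i_n$, and the two distinguished vertices $(1,\dots,1)$ and $(3,\dots,3)$, are all fixed by $\pi$. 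Since the maximal monotone paths are exactly the paths from $(1,\dots,1)$ to $(3,\dots,3)$ along which $h$ strictly increases --- necessarily by $1$ --- at each step, $\pi(P)$ is again such a path.

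The only subtle point, and the closest thing to an obstacle, is precisely this equivariance of the subdivision; everything else is formal. It is also worth remarking that $\pi$ is nothing but the restriction to $X^n$ of the coordinate-permuting self-homeomorphism of $(\mathbb{S}^2)^n$, which is cellular for $\widetilde{X}^n$ and, by the equivariance just established, carries the subdivision $X^n$ onto itself; from this viewpoint the well-definedness of the map on cells (independence of the chosen representation $W_F$) is immediate.
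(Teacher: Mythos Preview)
Your proposal is correct and follows essentially the same approach as the paper: both arguments verify that coordinate permutation preserves adjacency in the edge graph of $C_n'$ and preserves the coordinate-sum (your ``height'' $h$, the paper's distance $d_H(\cdot,x_{11\cdots1})+n$), hence sends maximal monotone paths to maximal monotone paths, from which the action on $X^n$ follows. Your treatment is slightly more explicit about well-definedness of the map on cells (independence of the representation $W_F$), which the paper leaves implicit.
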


\begin{proof}
Let $H$ be the edge graph of $C^{\prime}_n$. Then the distance $d_H(x_{i_1i_2\cdots i_n}, x_{11\cdots 1})$ between $x_{i_1i_2\cdots i_n}$ and $x_{11\cdots 1}$ in $H$ is $(i_1-1) + \cdots + (i_n-1) = (i_1+\cdots+i_n)-n$. 

\medskip 

\noindent {\em Claim} 2. If $x_{i_1i_2\cdots i_n}$ is a vertex of $C^{\prime}_n$ and $\pi\in \operatorname{Sym}(n)$ then $d_H(x_{i_1i_2\cdots i_n}, x_{11\cdots 1}) = d_H(\pi(x_{i_1i_2 \cdots i_n}), x_{11 \cdots 1})$.

\medskip 

If $\rho = (r, s)\in \operatorname{Sym}(n)$, $r<s$, is a transposition then $\rho(x_{i_1\cdots i_r\cdots i_s\cdots i_n}) = x_{i_1\cdots i_{r-1}i_s i_{r+1}\cdots i_{s-1}i_r i_{s+1}\cdots i_n}$. This implies $d_H(x_{i_1i_2 \cdots i_n}, x_{11\cdots 1}) = d_H(\rho(x_{i_1i_2\cdots i_n}),$ $x_{11\cdots 1})$. Since any permutation is the product of transpositions, this implies that $d_H(x_{i_1i_2\cdots i_n}, x_{11\cdots 1}) = d_H(\pi(x_{i_1i_2\cdots i_n}), x_{11\cdots 1})$. This proves Claim 2.

\medskip 

\noindent {\em Claim} 3. If $uv$ is an edge of $C^{\prime}_n$ and $\pi\in \operatorname{Sym}(n)$ then $\pi(u)\pi(v)$ is also an edge of $C^{\prime}_n$.

\medskip 

Suppose $u= x_{i_1i_2\cdots i_n}$ and $v= x_{j_1j_2\cdots j_n}$. Since $uv$ is an edge, there exists $\ell\in\{1, \dots, n\}$ such that $i_k = j_k$ for all $k\neq \ell$ and $|i_\ell -j_{\ell}| =1$.  Then $i_{\pi(k)} = j_{\pi(k)}$ for all $k\neq\ell$. Since $\sum_{r=1}^n i_r = \sum_{r=1}^n i_{\pi(r)}$ and $\sum_{r=1}^n j_r = \sum_{r=1}^n j_{\pi(r)}$, it follows that $|j_{\pi(\ell)} - i_{\pi(\ell)}|=1$. These imply $\pi(u)\pi(v) = x_{i_{\pi(1)}\cdots i_{\pi(n)}}x_{j_{\pi(1)}\cdots j_{\pi(n)}}$ is an edge of $C^{\prime}_n$. This proves Claim 3. 

\medskip

Let $P= v_0\mbox{-}v_1\mbox{-}\cdots\mbox{-}v_{2n-1}\mbox{-} v_{2n}$ be a maximal monotone path and $\pi\in \operatorname{Sym}(n)$. So, $v_0=x_{1\cdots 1} = \pi(x_{1\cdots 1}) = \pi(v_0)$ and $v_{2m}=x_{3\cdots 3} = \pi(x_{3\cdots 3}) = \pi(v_{2m})$. 
Then, by Claim 3, $\pi(v_{i-1})\pi(v_{i})$ is an edge of $C^{\prime}_n$ and by Claim 2, $d_H(\pi(v_i), \pi(v_0)) =i$ for $1\leq i\leq 2m$. This implies that 
$\pi(v_0)\mbox{-}\pi(v_1)\mbox{-}\cdots\mbox{-}\pi(v_{2n-1})\mbox{-} \pi(v_{2n})$ is a maximal monotone path in the edge graph of $C^{\prime}_n$. 
Therefore, for any maximal monotone path $P$ and $\pi\in \operatorname{Sym}(n)$, $\pi(V(P))$ is the vertex set of another maximal monotone path. Therefore, for any facet $V(P)_F$ of $X^n$, $\pi(V(P))_{\pi(F)}$ is a facet of $X^n$. These imply that $\pi(A_F) = \pi(A)_{\pi(F)}$, for $A_F\in X^n$ and $\pi\in \operatorname{Sym}(n)$, gives an action of $\operatorname{Sym}(n)$ on $X^n$. 
 This proves the lemma. 
 \end{proof}

The proof of \Cref{lem:path} implies the following statements. 

\begin{definition} \label{def:action-on-X^n}
The action of $\operatorname{Sym}(n)$ on $\widetilde{X}^n$ induces the following action on $X^n$. 
If $\{v_1, \dots, v_m\}_F$ is a simplex of $X^n$ and $\pi\in \operatorname{Sym}(n)$ then 
\begin{align*}\label{eq:action}
\pi(\{v_1, \dots, v_m\}_F) = \{\pi(v_1), \dots, \pi(v_m)\}_{\pi(F)}.
\end{align*}
\end{definition} 

We are now in a position to state and prove our main results.

\begin{theorem}\label{theo:good}
The action of \, $\operatorname{Sym}(n)$ on $X^n$ given in Definition \ref{def:action-on-X^n} is good. 
\end{theorem}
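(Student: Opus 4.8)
The plan is to verify \Cref{def:good_scc} directly: I must show that no two distinct vertices of $X^n$ lying in a common $\operatorname{Sym}(n)$-orbit are joined by an edge of $X^n$. Since every edge of $X^n$ is a $1$-cell $\{u,v\}_F$ contained in some maximal monotone path $P$ through the edge graph of $C'_n$ inside a maximal cell $F=A_1\cdots A_n$, it suffices to show: if $u = x_{i_1\cdots i_n}$ and $v = x_{j_1\cdots j_n}$ are the two vertices of such an edge, then $u$ and $v$ cannot be $\operatorname{Sym}(n)$-equivalent.

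First I would record the combinatorial description of an edge of a monotone path. If $\{u,v\}$ is an edge of the staircase subdivision appearing on a monotone path from $x_{1\cdots 1}$ to $x_{3\cdots 3}$, then (up to swapping $u$ and $v$) we have $d_H(v,x_{1\cdots 1}) = d_H(u,x_{1\cdots 1}) + 1$, i.e., $\sum_k j_k = \sum_k i_k + 1$. The key point is that the action of $\operatorname{Sym}(n)$ on the vertices only permutes coordinates, hence preserves the coordinate sum; by Claim 2 in the proof of \Cref{lem:path}, $d_H(\pi(w),x_{1\cdots 1}) = d_H(w,x_{1\cdots 1})$ for every vertex $w$ and every $\pi$. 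Therefore if $v = \pi(u)$ for some $\pi \in \operatorname{Sym}(n)$, then $\sum_k j_k = \sum_k i_k$, contradicting $\sum_k j_k = \sum_k i_k + 1$. Hence $u$ and $v$ are in distinct orbits, so $\{u,v\}$ is not an edge between orbit-equivalent vertices.

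Since this argument covers every edge of $X^n$ (every $1$-cell lies on some monotone path in some maximal cell, and the vertices of $X^n$ are exactly the vertices $x_{i_1\cdots i_n}$ of $C'_n$), we conclude that the action of $\operatorname{Sym}(n)$ on $X^n$ satisfies the defining condition of \Cref{def:good_scc}, and is therefore good. I would phrase the final step as an explicit invocation of \Cref{def:good_scc}, perhaps also noting via \Cref{lem:good_scc} the equivalent reformulation if it streamlines the write-up, though the direct "distinct orbit-sums" argument is self-contained.

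I do not expect a serious obstacle here: the whole point is that the monotone-path structure forces adjacent vertices to differ by exactly $1$ in total coordinate sum, while coordinate-permutation preserves that sum, so the two conditions are incompatible. The only thing to be careful about is making sure the "edge of $X^n$" is correctly identified with "edge lying on a monotone path with endpoints differing by $1$ in $H$-distance from $x_{1\cdots 1}$" — this is immediate from the definition of $X^n$ in \Cref{theo:scdX^n} together with the fact that a maximal monotone path is, by definition, a path of $2n$ edges each of which increments exactly one coordinate by $1$, so any edge of $P$ connects vertices whose coordinate sums differ by exactly $1$.
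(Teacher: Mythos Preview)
Your approach is the same as the paper's: both exploit that $\operatorname{Sym}(n)$ preserves the coordinate sum (equivalently $d_H(\,\cdot\,,x_{1\cdots 1})$, as in Claim~2), while distinct vertices on a single monotone path have distinct coordinate sums, so no two such vertices can share an orbit.

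There is one slip you should repair. You assert that for an edge $\{u,v\}$ of $X^n$ lying on a monotone path $P$ one has $\sum_k j_k = \sum_k i_k + 1$. That is only true when $u,v$ are \emph{consecutive} on $P$, i.e.\ when $uv$ is an edge of the graph $H$. A $1$-cell of $X^n$, however, is an arbitrary two-element subset of $V(P)$ (any two vertices of a $2n$-simplex span an edge), so the difference in coordinate sums can be any integer between $1$ and $2n$. The fix is immediate: what you actually need---and what the paper states as its Claim~4---is only that $u\neq v$ on a monotone path forces $\sum_k i_k \neq \sum_k j_k$, which follows since a monotone path visits each distance level exactly once. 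With that adjustment your argument is complete and coincides with the paper's.
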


\begin{proof} {\em Claim} 4. Let 
$x_{11\cdots1}\mbox{-}\cdots\mbox{-}x_{i_1i_2\cdots i_n}\mbox{-} \cdots\mbox{-} x_{k_1k_2\cdots k_n}\mbox{-} \cdots\mbox{-} x_{33\cdots3}$ be a monotone path. 
Then there does not exist $\pi\in \operatorname{Sym}(n)$ such that $\pi(x_{i_1i_2\cdots i_n}) = x_{k_1k_2\cdots k_n}$. 

\medskip 

Since $x_{11\cdots1}\mbox{-}\cdots\mbox{-}x_{i_1i_2\cdots i_n}\mbox{-} \cdots\mbox{-} x_{k_1k_2 \cdots k_n}\mbox{-} \cdots\mbox{-} x_{33\cdots3}$ is a monotone path,  it follows that $d_H(x_{i_1i_2\cdots i_n}, x_{11\cdots 1}) < d_H(x_{k_1k_2\cdots k_n}, x_{11\cdots 1})$. Now, if $\pi\in \operatorname{Sym}(n)$ then, by Claim 2, $d_H(\pi(x_{i_1i_2\cdots i_n}), x_{11\cdots 1}) = d_H(x_{i_1i_2\cdots i_n}, x_{11\cdots 1}) < d_H(x_{k_1k_2\cdots k_n},$ $x_{11\cdots 1})$. Hence, $\pi(x_{i_1i_2\cdots i_n}) \neq x_{k_1k_2\cdots k_n}$. This proves Claim 4. 

\medskip

Suppose the action of $\operatorname{Sym}(n)$ on $X^n$ is not good. Then there exists a pair of vertices $x, y$ in a $\operatorname{Sym}(n)$-orbit which are vertices of an edge $e$. Since each edge is incident to a facet, we have a facet $\sigma$ such that $e\leq \sigma$. 
This implies, $x, y$ are vertices of $\sigma$ (in $F^{\prime}$ for some maximal cell $F$ of 
$\widetilde{X}^n$). Thus, $x, y$ are vertices of a maximal monotone path and are in a $\operatorname{Sym}(n)$-orbit. But this is not possible by Claim 4. This proves the result. 
\end{proof}

Now we have

\begin{theorem}\label{theo:cp^n}
The quotient $X^n/\operatorname{Sym}(n)$ is a simplicial cell complex and \linebreak $|X^n/\operatorname{Sym}(n)| \cong \mathbb{CP}^n$ for all $n\geq 2$.  
\end{theorem}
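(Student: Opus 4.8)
The plan is to combine the machinery already assembled in Sections~\ref{sec:quotient}--\ref{sec:CPn}. By Theorem~\ref{theo:scdX^n}, $X^n$ is a simplicial cell decomposition of $(\mathbb{S}^2)^n$, so $|X^n| \cong (\mathbb{S}^2)^n$. By Lemma~\ref{lem:path}, $\operatorname{Sym}(n)$ acts on $X^n$ as a group of automorphisms (so $\operatorname{Sym}(n) \leq \operatorname{Aut}(X^n)$), and by Theorem~\ref{theo:good} this action is good in the sense of Definition~\ref{def:good_scc}. Hence Lemma~\ref{lem:quotient} immediately gives that the quotient poset $X^n/\operatorname{Sym}(n)$ is a simplicial cell complex, establishing the first assertion. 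For the second assertion, Corollary~\ref{cor:good} applies to the good action of $\operatorname{Sym}(n)$ on $X^n$ and yields $|X^n/\operatorname{Sym}(n)| \cong |X^n|/\operatorname{Sym}(n)$.

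It then remains to identify $|X^n|/\operatorname{Sym}(n)$ topologically. Here I would argue that the homeomorphism $|X^n| \cong (\mathbb{S}^2)^n$ can be chosen $\operatorname{Sym}(n)$-equivariantly, where $\operatorname{Sym}(n)$ acts on $(\mathbb{S}^2)^n$ by coordinate permutation. Indeed, the cell structure $\widetilde{X}^n = (S^2_3)^n$ is literally the $n$-fold Cartesian product of the two-triangle crystallisation of $\mathbb{S}^2$, and the coordinate-permutation action of $\operatorname{Sym}(n)$ on the topological space $(\mathbb{S}^2)^n$ restricts to the combinatorial action on $\widetilde{X}^n$ described at the start of Section~\ref{sec:CPn}; the staircase subdivision producing $X^n$ from $\widetilde{X}^n$ is performed compatibly in each maximal cell, and by Claims~2 and~3 in the proof of Lemma~\ref{lem:path} it is carried to itself by $\operatorname{Sym}(n)$, so the geometric realisation of the subdivision is an equivariant homeomorphism onto $(\mathbb{S}^2)^n$. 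Consequently $|X^n|/\operatorname{Sym}(n) \cong (\mathbb{S}^2)^n/\operatorname{Sym}(n)$.

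Finally, I would invoke the classical fact, cited in the introduction as \cite[Lemma 2.3]{BD12} (and \cite{BD12} more generally), that the quotient of $(\mathbb{S}^2)^n$ by the coordinate-permutation action of $\operatorname{Sym}(n)$ is homeomorphic to $\mathbb{CP}^n$. Chaining the homeomorphisms gives
\[
|X^n/\operatorname{Sym}(n)| \;\cong\; |X^n|/\operatorname{Sym}(n) \;\cong\; (\mathbb{S}^2)^n/\operatorname{Sym}(n) \;\cong\; \mathbb{CP}^n,
\]
which completes the proof. The hypothesis $n \geq 2$ is only needed to ensure the quotient is $\mathbb{CP}^n$ in the stated form (for $n=1$ one gets $\mathbb{S}^2 = \mathbb{CP}^1$ anyway, but the combinatorics degenerate).

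I expect the main obstacle to be the middle step: making precise that the geometric realisation of the subdivision $X^n$ of $\widetilde{X}^n$ can be taken $\operatorname{Sym}(n)$-equivariantly compatible with the standard coordinate-permutation action on $(\mathbb{S}^2)^n$. All the necessary ingredients are present --- $\widetilde{X}^n$ is a genuine Cartesian product of cell complexes, the staircase subdivision is defined uniformly cell-by-cell using the cube model $C_n' = ([1,2]\cup[2,3])^n$, and $\operatorname{Sym}(n)$ permutes the cube coordinates --- so the equivariance is essentially built in, but one should state it carefully rather than treat it as automatic. Everything else is a direct citation of Lemma~\ref{lem:quotient}, Theorem~\ref{theo:good}, Corollary~\ref{cor:good}, and \cite[Lemma 2.3]{BD12}.
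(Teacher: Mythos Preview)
Your proposal is correct and follows essentially the same route as the paper's proof: invoke Theorem~\ref{theo:good} and Lemma~\ref{lem:quotient} for the first part, then Theorem~\ref{theo:good} and Corollary~\ref{cor:good} for $|X^n/\operatorname{Sym}(n)| \cong |X^n|/\operatorname{Sym}(n)$, and finish with the chain $|X^n|/\operatorname{Sym}(n) \cong |\widetilde{X}^n|/\operatorname{Sym}(n) \cong (\mathbb{S}^2)^n/\operatorname{Sym}(n) \cong \mathbb{CP}^n$. The paper treats the equivariance step you flag as automatic (since $X^n$ is by construction a $\operatorname{Sym}(n)$-invariant subdivision of $\widetilde{X}^n = (S^2_3)^n$, so $|X^n| = |\widetilde{X}^n|$ as $\operatorname{Sym}(n)$-spaces), and your more explicit discussion of this point is entirely in order.
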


\begin{proof} 
By Theorem \ref{theo:good} and Lemma \ref{lem:quotient}, $X^n/\operatorname{Sym}(n)$ is a simplicial cell complex. This proves the first part. 

From Theorem \ref{theo:good} and Corollary \ref{cor:good}, we get $|X^n/\operatorname{Sym}(n)| \cong |X^n|/\operatorname{Sym}(n)$. Therefore, $|X^n/\operatorname{Sym}(n)| \cong |X^n|/\operatorname{Sym}(n) \cong |\widetilde{X}^n|/\operatorname{Sym}(n) \cong (\mathbb{S}^2)^n/\operatorname{Sym}(n)\cong \mathbb{CP}^n$. This proves the result. 
\end{proof}

Moreover, we have the following consequence of \Cref{theo:cp^n}. 

\begin{corollary}
The simplicial complex $(X^n)^{\prime}/\operatorname{Sym}(n)$ is a triangulation of $\mathbb{CP}^n$ for all $n\geq 2$. 
\end{corollary}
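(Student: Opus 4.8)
The plan is to deduce this corollary directly from \Cref{theo:cp^n} together with the results assembled in \Cref{sec:quotient}. The key observation is that taking the first derived subdivision commutes with taking the quotient by a good group action, which is exactly the content of the isomorphism $(X/G)^{\prime} \cong X^{\prime}/G$ established inside the proof of \Cref{cor:good}. So the argument will be short: it is really just a matter of chaining together the right lemmas in the right order and verifying that the resulting complex is genuinely a simplicial complex (not merely a simplicial cell complex) with the correct homeomorphism type.

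First I would invoke \Cref{theo:good}, which tells us that the action of $\operatorname{Sym}(n)$ on $X^n$ is good. By \Cref{lem:good}, it follows that the induced action of $\operatorname{Sym}(n)$ on the simplicial complex $(X^n)^{\prime}$ is also good in the sense of \Cref{def:good_sc}. Since $(X^n)^{\prime}$ is an honest simplicial complex and the action on it is good, the quotient $(X^n)^{\prime}/\operatorname{Sym}(n)$ is again a simplicial complex (the goodness condition guarantees that no edge of $(X^n)^{\prime}$ is collapsed to a loop and that the quotient of each simplex is embedded, so the orbit complex is a bona fide abstract simplicial complex on the orbit vertex set). This handles the claim that $(X^n)^{\prime}/\operatorname{Sym}(n)$ is a triangulation, i.e. a simplicial complex, rather than merely a simplicial cell complex.

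Next I would identify its topology. By \Cref{prop:good} applied to the good action of $\operatorname{Sym}(n)$ on $(X^n)^{\prime}$, we get $|(X^n)^{\prime}/\operatorname{Sym}(n)| \cong |(X^n)^{\prime}|/\operatorname{Sym}(n)$. But $|(X^n)^{\prime}| = |X^n|$ since a derived subdivision does not change the underlying space, and $|X^n|/\operatorname{Sym}(n) \cong \mathbb{CP}^n$ by the chain of homeomorphisms in the proof of \Cref{theo:cp^n}. Hence $|(X^n)^{\prime}/\operatorname{Sym}(n)| \cong \mathbb{CP}^n$ for all $n \geq 2$. Alternatively, and even more cleanly, one can use the isomorphism $(X^n)^{\prime}/\operatorname{Sym}(n) \cong (X^n/\operatorname{Sym}(n))^{\prime}$ proved inside \Cref{cor:good} (its proof establishes exactly this poset isomorphism, using that $V(X^{\prime}/G)$ and $V((X/G)^{\prime})$ are both the set of $G$-orbits of cells of $X$), so that $(X^n)^{\prime}/\operatorname{Sym}(n)$ is the first derived subdivision of the simplicial cell complex $X^n/\operatorname{Sym}(n)$; since the latter is a pseudotriangulation of $\mathbb{CP}^n$ by \Cref{theo:cp^n}, its derived subdivision is a (genuine, simplicial) triangulation of $\mathbb{CP}^n$.

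There is essentially no obstacle here — every ingredient is already in place. The only point requiring a line of care is the assertion that a good quotient of a simplicial complex is again a simplicial complex; this is implicitly used in \Cref{prop:good} (quoted from \cite{BD12}) but I would make it explicit, noting that goodness forbids two vertices of a common simplex from lying in the same orbit, so distinct vertices of each simplex of $(X^n)^{\prime}$ stay distinct in the quotient and the orbit of a simplex is again a simplex on its orbit vertex set. I would write the corollary's proof in two sentences: first citing \Cref{theo:good}, \Cref{lem:good} and \Cref{cor:good} (or its internal isomorphism) to get that $(X^n)^{\prime}/\operatorname{Sym}(n) \cong (X^n/\operatorname{Sym}(n))^{\prime}$ is a simplicial complex, and then citing \Cref{theo:cp^n} to conclude $|(X^n)^{\prime}/\operatorname{Sym}(n)| \cong \mathbb{CP}^n$.
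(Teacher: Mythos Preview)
Your proposal is correct and essentially matches the paper's own proof. The paper argues in two lines: it notes $\operatorname{Sym}(n)\leq \operatorname{Aut}(X^n)\leq \operatorname{Aut}((X^n)')$, invokes the isomorphism $(X^n)'/\operatorname{Sym}(n)\cong (X^n/\operatorname{Sym}(n))'$ established inside the proof of \Cref{cor:good}, and then concludes from \Cref{theo:cp^n}---which is precisely your ``alternative'' route; your first route via \Cref{lem:good} and \Cref{prop:good} is just an unpacking of that same chain.
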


\begin{proof}
First note that $\operatorname{Sym}(n)\leq \operatorname{Aut}(X^n) \leq \operatorname{Aut}((X^n)^{\prime})$.
Now, from the proof of Corollary \ref{cor:good}, $(X^n)^{\prime}/\operatorname{Sym}(n)$ and $(X^n/\operatorname{Sym}(n))^{\prime}$ are isomorphic simplicial complexes. The result now follows from Theorem \ref{theo:cp^n}. 
\end{proof}

\begin{example}
We have the following $f$-vectors of $T_n = X^n/\operatorname{Sym}(n)$, for $n\in \{1,2,3,4\}$:
\begin{align}
  f(T_1) =& (\,3, 3, 2\,), \nonumber \\
  f(T_2) =& (\,6, 15, 30, 30, 12\,),   \nonumber \\
  f(T_3) =& (\,10, 46, 184, 440, 596, 420, 120\,), \nonumber \\
  f(T_4) =& ( \,15, 111, 764, 3\,345, 8\,982, 14\,700, 14\,280, 7\,560, 1\,680\,). \nonumber 
\end{align}
In particular, we have that $T_n$ has ${n+2 \choose 2}$ vertices, $(2n)!/n!$ facets, and Euler characteristic $n+1$ for these examples. As in the case of $X^n$ (see \Cref{exam:Xn}), $T_n$ can be represented as a graph encoding. That is, as an edge-coloured dual graph containing all the information to recover the simplicial cell complex. See \Cref{app:code} for code to produce these simplicial cell complexes in low-dimensional topology software {\em Regina} \cite{regina}.
\end{example}

\begin{corollary} \label{cor:f_2n}
Let $X^n$ and $T_n=X^n/{\rm Sym}(n)$ be the simplicial cell complexes as defined above. Then 
\begin{enumerate}[(i)]
  \item $f_{2n}(X^n) = (2n)!$, 
  \item $f_{2n}((X^n)^{\prime}) = (2n)!(2n+1)!$, 
  \item $f_{2n}(T_n) = (2n)!/n!$, and 
  \item $f_{2n}((T_n)^{\prime})$. 
\end{enumerate}
\end{corollary}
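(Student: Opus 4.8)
The plan is to treat the four items as a chain of counting facts whose only substantive ingredient is that $\operatorname{Sym}(n)$ acts freely on the set of facets of $X^n$. For $(i)$, I would recall the construction of $X^n$ in \Cref{sec:scdecom}: $\widetilde{X}^n$ has $2^n$ maximal cells, each an $n$-fold Cartesian product of triangles, and the staircase subdivision cuts each of them into $\prod_{j=0}^{n-1}\binom{2(n-j)}{2} = (2n)!/2^n$ simplices of dimension $2n$; multiplying gives $f_{2n}(X^n) = 2^n\cdot(2n)!/2^n = (2n)!$. For $(ii)$, I would use the standard fact that the derived subdivision of a single $d$-simplex has exactly $(d+1)!$ top-dimensional faces, one per maximal chain of its face poset (equivalently, one per linear order of its $d+1$ vertices). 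Since the boundary complex of every facet of a simplicial cell complex is the boundary complex of a simplex, $(X^n)^{\prime}$ subdivides each of the $(2n)!$ facets of $X^n$ into $(2n+1)!$ simplices of top dimension, so $f_{2n}((X^n)^{\prime}) = (2n)!\,(2n+1)!$.

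The heart of the argument is $(iii)$, for which I claim that $\operatorname{Sym}(n)$ acts freely on the facets of $X^n$. A facet of $X^n$ is of the form $V(P)_F$ with $P$ a maximal monotone path in the edge graph $H$ of $C^{\prime}_n$ and $F$ a maximal cell of $\widetilde{X}^n$, and a permutation $\pi$ fixing it must in particular satisfy $\pi(V(P)) = V(P)$. By Claim 2 in the proof of \Cref{lem:path}, $\pi$ preserves the distance $d_H(\,\cdot\,, x_{11\cdots 1})$; the $2n+1$ vertices of $P$ realise the $2n+1$ pairwise distinct distances $0,1,\dots,2n$ to $x_{11\cdots 1}$, so any permutation of $V(P)$ that preserves these distances must fix each vertex of $P$ individually. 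Now for every coordinate $\ell\in\{1,\dots,n\}$ the path $P$ has a step whose two (consecutive, hence $\pi$-fixed) endpoints differ precisely in coordinate $\ell$ — indeed each coordinate is incremented exactly twice along $P$ — and applying the linear coordinate-permutation action of $\pi$ to such a pair forces $\pi(\ell)=\ell$. Hence $\pi=\mathrm{id}$. Since the facets of $X^n/\operatorname{Sym}(n)$ are exactly the $\operatorname{Sym}(n)$-orbits of facets of $X^n$ and each such orbit now has size $|\operatorname{Sym}(n)|=n!$, we obtain $f_{2n}(T_n) = (2n)!/n!$.

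For $(iv)$, I would combine $(ii)$ and $(iii)$ with the isomorphism $(X^n)^{\prime}/\operatorname{Sym}(n) \cong (X^n/\operatorname{Sym}(n))^{\prime} = (T_n)^{\prime}$ recorded in the proof of \Cref{cor:good}. A facet of $(X^n)^{\prime}$ is a maximal chain $\alpha_1<\cdots<\alpha_{2n+1}$ in $X^n$ with $\alpha_{2n+1}$ a facet of $X^n$; if $\pi$ fixes this chain it fixes $\alpha_{2n+1}$, so by $(iii)$ $\pi=\mathrm{id}$, i.e.\ $\operatorname{Sym}(n)$ also acts freely on the facets of $(X^n)^{\prime}$. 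Therefore $f_{2n}((T_n)^{\prime}) = f_{2n}((X^n)^{\prime})/n! = (2n)!\,(2n+1)!/n!$. The main (and essentially only) obstacle is the freeness claim in $(iii)$; within it, the one point needing care is the passage from ``$\pi$ preserves the vertex set of $P$'' to ``$\pi$ fixes $P$ pointwise'', which the distance argument above supplies. Everything else is a routine count or a direct appeal to results already established in the paper.
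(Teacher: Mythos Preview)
Your proof is correct, and parts $(i)$ and $(ii)$ are essentially identical to the paper's. The interesting divergence is in $(iii)$ and $(iv)$.

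For $(iii)$, the paper takes a more structural route: assuming $g\in\operatorname{Sym}(n)$ fixes a facet $\alpha$, it uses goodness of the action (via Lemma~\ref{lem:good_scc}) to show that $g$ fixes every ridge of $\alpha$, hence fixes each neighbouring facet in the dual graph $\Lambda(X^n)$; connectivity of $\Lambda(X^n)$ then forces $g$ to fix \emph{all} facets, and hence to be the identity. Your argument is instead a direct combinatorial analysis of a single monotone path: you use the distance-preservation from Claim~2 to pin down each vertex of $P$, and then read off $\pi(\ell)=\ell$ from the coordinate-steps of $P$. Your approach is more elementary and does not invoke goodness or dual-graph connectivity at all; the paper's approach, on the other hand, is a general mechanism that would work for any good action on a simplicial cell manifold, not just this particular $X^n$.

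For $(iv)$, the paper's argument is actually simpler than yours: it just reapplies the chain-counting of $(ii)$ to $T_n$, obtaining $f_{2n}((T_n)') = f_{2n}(T_n)\cdot(2n+1)! = (2n)!(2n+1)!/n!$ directly, without passing through $(X^n)'/\operatorname{Sym}(n)$ or a second freeness argument. Your route via freeness on maximal chains is correct (and is in fact the viewpoint of the Remark following the corollary), but it does more work than necessary here.
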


\begin{proof}
From the construction, we know that $f_{2n}(X^n) = (2n)!$. 

Since each $k$-simplex has $k+1$ faces of dimension $k-1$, it follows that  each $m$-simplex $\sigma$ of an $m$-dimensional simplicial (cell) complex $Y$ is in $(m+1)\times m\times (m-1) \times \cdots \times 2= (m+1)!$ maximal chains in $Y$. This implies that the number of maximal chains in $X^n$ is equal to $(2n)!\times (2n+1)!$. Hence the number of facets $f_{2n}((X^n)^{\prime})$ of $(X^n)^{\prime}$ is equal to $(2n)!(2n+1)!$.  This proves $(ii)$. 

Suppose $g(\alpha)=\alpha$ for some facet $\alpha$ of $X^n$ and $g\in {\rm Sym}(n)$.  Let $\beta$ be a neighbour of $\alpha$ in the dual graph $\Lambda(X^n)$. So,  $\gamma := \alpha\cap\beta$ is a ridge and a common face of $\alpha$ and $\beta$. So, $\gamma$ is a simplex in $\alpha\cap g(\alpha)=\alpha$. 
Since the action of ${\rm Sym}(n)$ on $X^n$ is good, by Lemma \ref{lem:good} (Lemma 3.4), $g(\gamma) = \gamma$. Then $g(\operatorname{Lk}_{X^n}(\gamma)) = \operatorname{Lk}_{X^n}(\gamma)$ and hence $g(\beta)=\beta$. Since the graph $\Lambda(X^n)$ is connected, this implies that $g(\sigma) = \sigma$ for any facet $\sigma$ of $X^n$. Thus $\sigma \equiv {\rm Id}$ on the set of facets of $X^n$. This implies that the action of ${\rm Sym}(n)$ on the set of facets of $X^n$ is free and hence $f_{2n}(T_n)= f_{2n}(X^n/{\rm Sym}(n)) = f_{2n}(X^n)/|{\rm Sym}(n)| = (2n)!/n!$. 

By similar arguments as in the proof of $(ii)$ above, $f_{2n}((T^n)^{\prime}) = f_{2n}(T^n) \times (2n+1)! = (2n)!(2n+1)!/n!$. This completes the proof. 
\end{proof}

\begin{example}
To compute the $f$-vectors of the simplicial trianglations of $\mathbb{C}P^n$, that is, 
$(T_n)'$, we have to count the number of faces in the derived subdivision of a triangulation
with a given $f$-vector. For the number of vertices this is simple: every face of the 
original triangulation contains one vertex of the subdivision in its centre, and hence
the number of vertices in the derived subdivision equals the sum of the face numbers of
the original complex. For the topdimensional faces (of, say, dimension $d$), every 
original facet is subdivided into $(d+1)!$ new facets.

To generalise this observation to all other face numbers, we claim that the set of 
$i$-dimensional faces in the interior of an original $j$-dimensional face $\Delta$ is in 
bijection with the set of surjective functions from $[j+1]=\{1, \ldots , j+1 \}$ to $[i+1]=\{ 1, \ldots , i+1\}$.
To see this, assume that the $j$-dimensional face has vertex labels $1, \ldots , j+1$.
For a given surjective function $f: [j+1] \to [i+1]$, the corresponding $i$-dimensional 
face has its first vertex in the barycentre of the face of $\Delta$ with vertex set $f^{-1} (1)$,
the second vertex in the barycentre of $f^{-1} ( \{1,2\})$, and so on. Every surjective function
will yield a unique such $i$-face, and every $i$-face uniquely determines a suitable surjective 
function. Take a moment to verify that this bijection also works to describe the number of 
facets, as well as the number of vertices of the subdivision.

The number of surjective functions $f: [j+1] \to [i+1]$, in turn, can be determined by
an application of the inclusion-exclusion principle as
\[ \sum \limits_{k=0}^{i} \left( (-1)^k \cdot {i+1 \choose k} \cdot (i+1-k)^{j+1} \right).\]
Note that, for $i=j$, this formula evaluates to $(j+1)!$.
In summary, given a complex with $f$-vector $(f_0, \ldots , f_d)$, the face vector
of its derived subdivision, $(f'_0, \ldots , f'_d)$ is given by
\[ f'_\ell = \sum \limits_{j=\ell}^{d} \sum \limits_{k=0}^{\ell} \left( (-1)^k \cdot {\ell+1 \choose k} \cdot (\ell+1-k)^{j+1} \right) \cdot f_j .\]

We have the following $f$-vectors for the simplicial triangulations of $\mathbb{C}P^n$, 
that is, $(T_n)'$, for $n\in \{1,2,3,4\}$:
\begin{align}
  f((T_1)') =& (\,8, 18, 12\,), \nonumber \\
  f((T_2)') =& (\,93, 990, 3\,060, 3\,600, 1\,440\,),   \nonumber \\
  f((T_3)') =& (\,1\,816, 66\,396, 549\,864, 1\,816\,800, 2\,843\,520, 2\,116\,800, 604\,800\,), \nonumber \\
  f((T_4)') =& ( \,51\,437, 5\,808\,816, 109\,509\,744, 767\,035\,800, 2\,621\,323\,440,  \nonumber \\
  & \,\,\, 4\,874\,990\,400, 5\,050\,684\,800, 2\,743\,372\,800, 609\,638\,400\,). \nonumber 
\end{align}
%
\end{example}

\begin{remark}
Let $X$ be a simplicial cell structure of a $d$-manifold and $g\in {\rm Aut}(X)$. Suppose $g(C) = C$ for some chain $C\equiv \emptyset< u < \alpha_1< \cdots < \alpha_d$. Then $g(u)=u$ and $g(\alpha_i)=\alpha_i$ for all $i$. We claim that $g\equiv {\rm Id}$. The claim is true for $d=1$. Assume the claim is true for dimension $<d$. Since $g(u)=u$, $g$ is an automorphism of $\operatorname{Lk}_X(u)$ and $g(\widetilde{C}) = \widetilde{C}$, where $\widetilde{C} \equiv u < \alpha_1< \cdots < \alpha_d$ is a chain in $\operatorname{Lk}_X(u)$. By inductive hypothesis, $g|_{\operatorname{Lk}_X(u)} \equiv {\rm Id}$. This implies that  $g(D) = d$ for all maximal chains $D$ containing $u$.  Let $V(\alpha_1)= \{u, v\}$. Since $g(\alpha_1)=\alpha_1$ and $g(u)=u$, it follows that $g(v)=v$. Thus $g(C^{\prime})=C^{\prime}$ for the chain $C^{\prime}\equiv \emptyset< v < \alpha_1< \cdots < \alpha_d$ and hence, by the above arguments, $g(E) =E$ for all maximal chains $E$ containing $v$. Since the edge graph of $X$ is connected, it follows that $g(A) =A$ for all maximal chains in $X$. Thus $g\equiv {\rm Id}$. This implies that  the action of ${\rm Aut}(X)$ (or any subgroup of ${\rm Aut}(X)$) is free on the set of maximal chains of $X$. In other words, ${\rm Aut}(X)$ is free on the set of facets of the derived complex $X^{\prime}$. Therefore, for $G\leq {\rm Sym}(n)$, the action of $G$ on the set of facets of $(X^n)^{\prime}$ is free  and hence $f_{2n}((X_n)^{\prime}/{\rm Sym}(n)) = f_{2n}((X_n)^{\prime})/|{\rm Sym}(n)|= (2n)!(2n+1)!/n!$. Thus, $f_{2n}((X^n/{\rm Sym}(n))^{\prime}) = f_{2n}((X_n)^{\prime}/{\rm Sym}(n)) = (2n)!(2n+1)!/n!$ and hence $f_{2n}(X^n/{\rm Sym}(n)) = (2n)!/n!$. These prove Corollary \ref{cor:f_2n} without using the goodness of the action of ${\rm Sym}(n)$. 
\end{remark}

\section{Implementation}\label{sec:code}

One advantage of our construction of simplicial cell decompositions of $(\mathbb{S}^2)^n$ and $\mathbb{CP}^n$ is that they can be explicitly implemented. In \Cref{app:code}, we provide python \cite{python} code to produce simplicial cell decompositions for $n \in \{2,3,4\}$ within low-dimensional topology software {\em Regina} \cite{regina}\footnote{In {\em Regina}, what we call simplicial cell decomposition is called a {\em triangulation}. We sometimes switch between the two terms whenever it is clear what we mean from context.}. This code is also available from \cite{DS2024}.
Since {\em Regina}'s python interface only implements simplicial cell decompositions up to real dimension eight, we added code to produce a graph encoding of our simplicial cell decompositions which works for arbitrary values of $n$.

Moreover, in \Cref{app:isosigs} we provide our simplicial cell structures of  $\mathbb{CP}^2$, $\mathbb{CP}^3$, and $\mathbb{CP}^4$ explicitly in the form of {\em Regina} (pseudo) triangulations or graph encodings (or both). 

\section*{Statements and Declarations}

Code and data corresponding to the family of simplicial cell decompositions described in this article are available from \Cref{app:code} and \Cref{app:isosigs}. The code to produce members from our family for larger $n$ are also available from public repository \cite{DS2024}.

The authors state that there is no conflict of interest.

{\small


\appendix

\section{Code to produce $X^n$ and $X^n/\operatorname{Sym}(n)$}
\label{app:code}

To run this code, first install low-dimensional topology software {\em Regina} \cite{regina}.
The code below runs in  {\em Regina}'s python interface \texttt{regina-python}. Alternatively.
similar code can also be run in python \cite{python}, after importing the {\em Regina} library; or
in the terminal environment from within the {\em Regina} graphical user interface.

See \cite{regina} for instructions on how to use {\em Regina} and extensive documentation.

\subsection{Helper functions to produce our simplicial cell decompositions}
\label{app:helper}

This code has been tested within the python interface of {\em Regina} 
\texttt{regina-python}. It is also available from \cite{DS2024}. The helper functions replicate our construction, described in detail in \Cref{sec:scdecom} and \Cref{sec:CPn}.

First, we need the help 
of the following two python packages.
\begin{lstlisting}[language=python]
# for permutation iterators
import itertools

# for computing the inverse of a permutation
from sympy.combinatorics import Permutation
\end{lstlisting}

Next we require a function to produce all path orderings defining the simplicial subdivision of
the $n$-fold cartesian product of the triangle, see the beginning of \Cref{sec:scdecom}.

\begin{lstlisting}[language=python]
def orderings(all_orderings,pos):
  # This function produces all monotone paths defining the simplices in
  # the canonical triangulation of the n-fold cartesian product of a triangle
  #
  # For a given n the function must be initiated with
  # all_orderings = list of length 2n filled with '-1'
  # pos = 1
  #
  # output is a list of lists of length 2n filled with entries 1, ... , n
  # length of list is (2n choose 2) x (2n-2 choose 2) x (4 choose 2)
  next_level = []
  # go through all partial paths found so far, and fill in the next value (pos)
  for lst in all_orderings:
    for i in range(len(lst)):
      # skip entries that are already filled
      if lst[i] != -1:
        continue
      # skip entries that are already filled
      for j in range(i+1,len(lst)):
        if lst[j] != -1:
          continue
        # fill every pair of unfilled entries with value 'pos'
        tmp = lst.copy()
        tmp[i] = pos
        tmp[j] = pos
        # add to new list of partial paths
        next_level.append(tmp)
  if not -1 in next_level[0]:
    # if done, return complete list of paths in cube
    return next_level
  else:
    # otherwise, call next level of recursion
    return orderings(next_level,3*pos)
\end{lstlisting}

With this, the simplicial subdivision of the $n$-fold product of the triangle is obtained 
by the following function (which produces an abstract simplicial complex)

\begin{lstlisting}[language=python]
def triangle_product(n):
  # This function produces all simplices in the canonical triangulation of 
  # the n-fold Cartesian product of the triangle with vertices 0, .... , 3^n-1
  #
  # number of facets is the same as number of monotone paths returned by
  # orderings(.,.) ((2n choose 2) x (2n-2 choose 2) x (4 choose 2) = 2n!/2^n)
  facets = []
  ords = orderings([[-1]*(2*n)],1)
  ords.sort()
  # turn paths into simplicial complex subdividing the n-fold Cartesian product of the triangle
  for ordering in ords:
    f=[0]
    for k in ordering:
      f.append(f[-1]+k)
    facets.append(f)
  return facets
\end{lstlisting}

The following function produces a list of boundary faces of the subdivided triangle product -- a simplicial complex.

\begin{lstlisting}[language=python]
def boundary_faces(facets):
  # This function returns the (Z2)-boundary of a simplicial complex defined by "facets"
  # record co-dim. one boundary faces of simplices as we see them
  faces = []
  # record their position as a pair [facet,boundary face index]
  pos = []
  for f in range(len(facets)):
    for k in range(len(facets[f])):
      tmp = facets[f].copy()
      tmp.remove(facets[f][k])
      if not tmp in faces:
        # add new boundary faces
        faces.append(tmp)
        pos.append([f,k])
      else:
        # remove boundary faces if we encounter them a second time
        pos.remove(pos[faces.index(tmp)])
        faces.remove(tmp)
  return faces,pos   
\end{lstlisting}

This simplicial complex needs to be converted into a {\em Regina} (pseudo) triangulation.

\begin{lstlisting}[language=python]
def simpToReg(simp,d,triangulation,perm):
  # Turn facet list of simplicial complex into Regina triangulation
  #
  # Here, this is only applied to the simplicial subdivision of the n-fold Cartesian
  # product of a triangle (2 x n = d)
  #
  # simp          = facet list (of abstract simplicial complex)
  # d             = dimension of triangulation (d = len(simp[0])-1)
  # triangulation = appropriate regina function for generating a d-dimensional triangulation
  # perm          = appropriate regina function for permuting elements of a simplex
  #                 (d+1) points
  facets = []
  for i in range(d+1):
    tmp = list(range(d+1))
    tmp.remove(i)
    facets.append(tmp)
  n = len(simp)
  neighbours = []
  gluings = []
  for j in range(n):
    neighbours.append([None] * (d+1))
    gluings.append([None] * (d+1))
    for f in range((d+1)):
      found = False
      for k in range(n):
        if k == j:
          continue
        good = True
        for i in range(d):
          if simp[j][facets[f][i]] not in simp[k]:
            good = False
            break
        if not good:
          continue
        found = True
        break
      if not found:
        continue
      neighbours[j][f] = k
      tmp = [None] * (d+1)
      unused = int(d*(d+1)/2)
      for i in range(d):
        adj = simp[k].index(simp[j][facets[f][i]])
        tmp[facets[f][i]] = adj
        unused = unused - adj
      tmp[f] = unused
      gluings[j][f] = perm(tmp)
  ans = triangulation()
  for j in range(n):
      ans.newSimplex()
  for j in range(n):
    for f in range(d+1):
      if ans.simplex(j).adjacentSimplex(f) == None and gluings[j][f] != None:
        ans.simplex(j).join(f,ans.simplex(neighbours[j][f]), gluings[j][f])
  return ans
\end{lstlisting}

Finally the following code produces the orbits of simplices of length $n!$ given by the 
$\operatorname{Sym}(n)$-action realising (complex) coordinate permutation

\begin{lstlisting}[language=python]
def orbits(n):
  # This function returns the set of orbits (each of length n!) of the natural
  # Sym(n)-action on the coordinates of the n-fold Cartesian product of
  # the 2-triangle 2-sphere
  # 
  # Idea is that an orbit is made out of all permutations of the letters of
  # the defining orbit (on the level of a simplex in a cell), with
  # target cell defined by permuting the letters of the cell label
  # 
  # For this, we use the following global counting of simplices:
  # 1. triangulation of one cell has ordering of simplices i according
  #    to lexicographic ordering of path labels (N simplices overall)
  # 2. For cell B_j = {U,L}^n, we can number from all U = 0 to all L = 2^n-1
  # 3. Global index of simplex is then idx = i + N x j
  # 
  # Orbits:
  #  (A) Every orbit must contain one representative of every permutation of paths
  #  (B) This gives groups of orbits of n! paths in {n choose #Us} = {n choose #Ls}
  #      facets
  #  (C) If a simplex is given by a path p in a cell c, then, for all sigma in
  #      Sym(n): produce sigma(p x c)
  # 
  # Coordinate axes are encoded as follows: 3^i ~ x_(i+1)
  lut = [1]
  for k in range(1,n):
    lut.append(3*lut[-1])
  lut.reverse()
  # list for orbits (each entry of length n!, (2n)!/n! orbits)
  orbs = []
  # paths defining simplices and number of simplices in triangulation of cell
  paths = orderings([[-1]*(2*n)],1)
  paths.sort()
  cell_len = len(paths) # this is equal to int(math.factorial(2*n)/2**n)
  # for conversion of cell number to binary
  frmt = "0"+str(n)+"b"
  # symmetric group on n elements
  all_perms = list(itertools.permutations(list(range(n))))
  # loop over all cells c
  for c in range(2**n):
    # loop over all simplices in cell c
    for q in range(cell_len):
      # check if we have seen this facet already
      already_done = False
      for o in orbs:
        if cell_len*c + q in o:
          already_done = True
          break
      if already_done:
        continue
      # found a representative for an orbit not yet processed
      # convert cell number to binary
      bnr = format(c, frmt)
      # collect group of orbits (orbit in cell) x (orbit of cells)
      all_bnrs = list(set(list(itertools.permutations(list(bnr)))))
      all_bnrs.sort()
      for seed in all_bnrs:
        cur_orb = []
        # permute paths by all permutation on n letters
        for p in range(len(all_perms)):
          # variable for permuted path
          new_path = []
          for k in paths[q]:
            new_path.append(lut[all_perms[p][lut.index(k)]])
          # determine target cell (permute cell)
          new_seed = list(seed)
          # take inverse of permutation
          sigma = Permutation(list(all_perms[p]))**(-1)
          # binary word for cell is in reverse with respect to coordinate axes enumeration
          new_bnr = []         
          new_seed.reverse()
          for k in list(sigma):
            new_bnr.append(new_seed[k])
          new_bnr.reverse()
          # represent cell label as binary string for conversion
          P = ''.join(new_bnr)
          dec = int(P,2)
          # add new facet in target cell to orbit
          cur_orb.append(dec*cell_len+paths.index(new_path))
        cur_orb.sort()
        orbs.append(cur_orb)
  orbs.sort()
  return orbs
\end{lstlisting}

\subsection{Code to produce sphere products}
\label{app:sp}

With the helper functions from \Cref{app:helper} in place, code to produce the
simplicial cell decompositions of $(\mathbb{S}^2)^n$, $n \in \{2,3,4\}$, is given below.

\begin{lstlisting}[language=python]
def sphere_product(NN,triangulation,perm):
  # return sphere product (works up to NN=4, regina only allows triangulations
  # up to 8 = 2 x 4 real dimensions)
  # 
  # NN            = (complex) dimension of complex projective space
  # triangulation = appropriate regina function for generating a d-dimensional triangulation
  # perm          = appropriate regina function for permuting elements of a simplex
  #                 (d+1) points
  O = orderings([[-1]*(2*NN)],1)
  O.sort()
  F = triangle_product(NN)
  faces,pos = boundary_faces(F)
  len_faces = len(F)
  Tri = simpToReg(F,2*NN,triangulation,perm)
  # for conversion to binary
  frmt = "0"+str(NN)+"b"
  # build up triangulation from disjoint triangulations of cells
  # cells are numbered by which number they represent in binary - 1
  T = triangulation()
  for c in range(2**NN):
    T.insertTriangulation(Tri)
  # loop over cells
  for c in range(2**NN):    
    # make buckets of free faces to connect to neighbour cells:
    buckets = []
    for i in range(NN):
      buckets.append([])
    for j in range(len(faces)):
      for char in range(NN-1):
        # Algorithm:
        # char=0: len=2: first
        # char=0, len=3: not first (done for n=2)
        #  char=1, len=4: second
        #  char=1, len=5: not first not second (done for n=3)
        #    char=2, len=6: third
        #    char=2, len=7: not first, second, third (done for n=4)
        #      ...
        # for entry 2*(char+1)
        ctr1 = 0
        # for entry 2*(char+1)+1
        ctr2 = 0
        res=[]
        for k in faces[j]:
          if not k%3**(char+1) in res:
            res.append(k%3**(char+1))
        if len(res) == 2*(char+1):
          buckets[char].append(pos[j])
          break
        if len(res) == 2*(char+1)+1 and char == NN-2:
          buckets[char+1].append(pos[j])
    # decide to which cell to glue to
    bnr = format(c, frmt)
    for char in range(NN):
      # already glued
      if bnr[char] == '1':
        continue
      ll = list(bnr)
      ll[char] = '1'
      newBnr=''.join(ll)
      newC = int(newBnr,2)
      for buc in buckets[char]:
        T.simplex(len_faces*c+buc[0]).join(buc[1],T.simplex(buc[0]+len_faces*newC),perm())
  return T
\end{lstlisting}

The sphere products in dimensions $4$, $6$, and $8$ can then be called as follows.

\begin{lstlisting}[language=python]
def s2xs2():
  # returns a triangulation of S2 x S2
  return sphere_product(2,Triangulation4,Perm5)
  
def s2xs2xs2():
  # returns a triangulation of S2 x S2 x S2
  return sphere_product(3,Triangulation6,Perm7)
  
def s2xs2xs2xs2():
  # returns a triangulation of S2 x S2 x S2 x S2
  return sphere_product(4,Triangulation8,Perm9)
\end{lstlisting}

\subsection{Code to produce $\mathbb{CP}^n$}
\label{app:cpn}

Finally, we can produce the code to construct simplicial cell decompositions 
$T_n = X^n/\operatorname{Sym}(n)$, for $n\in \{2,3,4\}$:

\begin{lstlisting}[language=python]
def CP(NN,triangulation,perm):
  # calculate orbits
  orbs = orbits(NN)
  # compute sphere product
  sp = sphere_product(NN,triangulation,perm)
  # empty triangulation to contain complex projective space
  T = triangulation()
  # compute paths for simplicial subdivision of single cell
  paths = orderings([[-1]*(2*NN)],1)
  # sort (IMPORTANT to do this after every call of 'orderings(.,.)')
  paths.sort()
  # go through orbits and add one simplex per orbit
  for o in range(len(orbs)):
    T.newSimplex()
  # for every smallest orbit representative: detect gluing from sphere product sp
  for k in range(len(orbs)):
    # go through all orbit reps and check their adjacent orbits
    for simp in orbs[k]:
      for l in range(2*NN+1):
        adj = sp.simplex(simp).adjacentSimplex(l)
        target = False
        for o in range(len(orbs)):
          if adj.index() in orbs[o]:
            target = o
            break
    rep = orbs[k][0]
    for l in range(2*NN+1):
      adj = sp.simplex(rep).adjacentSimplex(l)
      # if face is still free
      if T.simplex(k).adjacentSimplex(l) == None:
        # get target simplex
        for o in range(len(orbs)):
          if adj.index() in orbs[o]:
            target = o
            break
        if T.simplex(target).adjacentSimplex(l) == None:
          T.simplex(k).join(l,T.simplex(target),perm())
  return T
\end{lstlisting}

\noindent
The complex spaces in (real) dimensions $4$, $6$, and $8$ can then be called as follows.

\begin{lstlisting}[language=python]
def CP2():
  # returns a triangulation of CP2
  return CP(2,Triangulation4,Perm5)
  
def CP3():
  # returns a triangulation of CP3
  return CP(3,Triangulation6,Perm7)
  
def CP4():
  # returns a triangulation of CP4
  return CP(4,Triangulation8,Perm9)
\end{lstlisting}

\section{Explicit presentation of our simplicial cell complexes}
\label{app:isosigs}

In {\em Regina} \cite{regina}, triangulations are represented by {\em isomorphism signatures}.
An isomorphism signature of a triangulation is an encoding of the face gluings of a triangulation
into a string such that two triangulations are isomorphic if and only if their isomorphism
signatures are equal. See \cite{Bur11c} for more details on how these signatures are defined. For
our purposes it is enough to know that each such string contains enough information to 
uniquely reconstruct a triangulation.

In this section we provide isomorphism signatures of the (pseudo) triangulations of 
$(\mathbb{S}^2)^n$, and $\mathbb{CP}^n$, $n \in \{1,2,3,4\}$ described in this article,
which can be used to directly construct the (pseudo) triangulations using {\em Regina} 
\cite{regina} command 
\[\texttt{T = TriangulationN.fromIsoSig()}\] 
(where \texttt{N} is to be replaced by the appropriate real dimension).

Since isomorphism signatures require a working installation of {\em Regina} to be useful,
we also present all triangulations as graph encodings. Graph encodings are edge-coloured
graphs encoding simplicial cell complexes, see \cite{FGG1986} for details. Every node of 
the graph represents a facet, and every arc represents a ridge. The colour of the arc 
encodes which faces are glue together. Given the edges of a graph encoding, a small script
can turn it into a Regina triangulation and vice versa.

\subsection{$\mathbb{S}^2 \times \mathbb{S}^2$ and $\mathbb{CP}^2$}

Isomorphism signature of $\mathbb{S}^2 \times \mathbb{S}^2$:

\begin{lstlisting}[language=python]
yLwMMLvAzQQvQAQAPzQMQbceeefgjjkkmnonlpqrqppsqsruuuwvwvxxxxaaaaaaaaaaaaaaaaaaaaaaaaaaaaaaa
aaaaaaaaaaaaaaaaaaaaaaaaaaaaaaaaaaaaaaaaaaa
\end{lstlisting}

\noindent
Triangulation of $\mathbb{S}^2 \times \mathbb{S}^2$ as a graph encoding:

\begin{lstlisting}[language=python]
Colour 0:
((0,12),(1,13),(2,14),(3,9),(4,10),(5,11),(6,18),(7,19),(8,20),(15,21),(16,22),(17,23))

Colour 1:
((0,12),(1,3),(2,4),(5,11),(6,18),(7,9),(8,10),(13,15),(14,16),(17,23),(19,21),(20,22))

Colour 2:
((0,1),(2,8),(3,15),(4,5),(6,7),(9,21),(10,11),(12,13),(14,20),(16,17),(18,19),(22,23))

Colour 3:
((0,6),(1,2),(3,4),(5,17),(7,8),(9,10),(11,23),(12,18),(13,14),(15,16),(19,20),(21,22))

Colour 4:
((0,6),(1,7),(2,14),(3,9),(4,16),(5,17),(8,20),(10,22),(11,23),(12,18),(13,19),(15,21))
\end{lstlisting}

\noindent
Isomorphism signature of $\mathbb{CP}^2$:

\begin{lstlisting}[language=python]
mLwMMLLAQQQbceeefgkjjlkjklljlkaaaaaaaaaaaaaaaaaaaaaaaaaaaaaaaaaaaaaa
\end{lstlisting}

\noindent
Simplicial cell decomposition of $\mathbb{CP}^2$ as a graph encoding:

\begin{lstlisting}[language=python]
Colour 0: 
((0,8),(1,7),(2,6),(3,9),(4,10),(5,11))

Colour 1: 
((0,8),(1,2),(3,9),(4,6),(5,7),(10,11))

Colour 2: 
((0,1),(2,5),(3,4),(6,11),(7,8),(9,10))

Colour 3: 
((0,3),(1,2),(4,5),(6,7),(8,9),(10,11))

Colour 4: 
((0,3),(1,4),(2,6),(5,11),(7,10),(8,9))
\end{lstlisting}

\subsection{$\mathbb{S}^2 \times \mathbb{S}^2 \times \mathbb{S}^2$ and $\mathbb{CP}^3$}

The isomorphism signature of $(S^2)^3$ is somewhat too long to present it here. It can 
be generated using the code in \cite{DS2024}. The isomorphism signature of $\mathbb{CP}^3$:

\begin{lstlisting}[language=python]
-c4bLLMLzLPLQwvALAPAvMQQQwQMQAvvPwAALMQMAQPAQAPQQQQAPQPwzzAQLwQMAQQPAPAQQMQLQMPQLQQMQAvwA
MQQQAAAAQLQwQQPQPPQQQQQPAvMAQAMAQQQQPQPQAQQwPMQQPQQQAQQcaeadafahafajajajahakalaoanapapaqa
ravasasawaxaqazawaBaraDaCawauaAaAatayaEaBauaGaFazaCaGaGaLaJaNaKaOaHaHaSaIaVaTaMaJaVaNaKaK
aOaPaWaQaQaUaXaZaRaYaYaHaQa0aIaYaWa1aRaWaVaLaRaKaKaIa0aSaXaZa1aTa3a3aYaVa5a6a9a9a+a-a-a+a
8a-a-acbcbabdbdbab9acb8a7aebgbdb8a5afbbb6aibcb7a7afbbbib8adb9aebfbmb8agbbbmb5akbebgb6akbo
bhbhbqblbmbibqbrbnbpbjbrbnbtbubvbtbvbzbxbzbxbxbubzbxbzbvbybxbxbzbubBbubybvbBbsbFbDbybFbAb
wbGbAbBbtbHbIbCbDbDbybJbKbEbAbKbJbsbsbIbCbGbEbwbHbEbJbFbFbKbGbLbKbNbObPbPbQbRbQbRbMbPbRbS
bPbTbUbObObUbNbObMbSbRbTbMbSbVbSbTbQbTbUbNbNbWbMbMbSbXbWbXbVbVbXbZbZbZb1b0b1b1b1bYb1bZbZb
2b0bYbYb2b0b0b2b2bYbYb3b3b3b3b3b3baaaaaaaaaaaaaaaaaaaaaaaaaaaaaaaaaaaaaaaaaaaaaaaaaaaaaaa
aaaaaaaaaaaaaaaaaaaaaaaaaaaaaaaaaaaaaaaaaaaaaaaaaaaaaaaaaaaaaaaaaaaaaaaaaaaaaaaaaaaaaaaaa
aaaaaaaaaaaaaaaaaaaaaaaaaaaaaaaaaaaaaaaaaaaaaaaaaaaaaaaaaaaaaaaaaaaaaaaaaaaaaaaaaaaaaaaaa
aaaaaaaaaaaaaaaaaaaaaaaaaaaaaaaaaaaaaaaaaaaaaaaaaaaaaaaaaaaaaaaaaaaaaaaaaaaaaaaaaaaaaaaaa
aaaaaaaaaaaaaaaaaaaaaaaaaaaaaaaaaaaaaaaaaaaaaaaaaaaaaaaaaaaaaaaaaaaaaaaaaaaaaaaaaaaaaaaaa
aaaaaaaaaaaaaaaaaaaaaaaaaaaaaaaaaaaaaaaaaaaaaaaaaaaaaaaaaaaaaaaaaaaaaaaaaaaaaaaaaaaaaaaaa
aaaaaaaaaaaaaaaaaaaaaaaaaaaaaaaaaaaaaaaaaaaaaaaaaaaaaaaaaaaaaaaaaaaaaaaaaaaaaaaaaaaaaaaaa
aaaaaaaaaaaaaaaaaaaaaaaaaaaaaaaaaaaaaaaaaaaaaaaaaaaaaaaaaaaaaaaaaaaaaaaaaaaaaaaaaaaaaaaaa
aaaaaaaaaaaaaaaaaaaaaaaaaaaaaaaaaaaaaaaaaaaaaaaaaaaaaaaaaaaaaaaaaaaaaaaaaaaaaaaaaaaaaaaaa
aaaaaaaaaaaaaaaaaaaaaaaaaaaaaaaaaaaaaaaaaaaaaaaaaaaaaaaaaaaaaaaaaaaaaaaaaaaaaaaaaaaaaaaaa
aaaaaaaaaaaaaaaaaaaaaaaaaaaaaaaaaaaaaaaaaaaaaaa
\end{lstlisting}

The simplicial cell decomposition of $\mathbb{CP}^3$ represented as a graph encoding has two automorphisms
of order $2$. The first leaves colours invariant, the second permutes colour $i$ to 
$6-i$, $0 \leq i \leq 6$.

The automorphisms are given as

\begin{lstlisting}[language=python]
(1,106)(2,107)(3,108)(4,109)(5,110)(6,111)(7,112)(8,113)(9,114)(10,115)(11,116)(12,117)
(13,118)(14,119)(15,120)(16,93)(17,92)(18,91)(19,90)(20,89)(21,88)(22,102)(23,101)(24,100)
(25,105)(26,104)(27,103)(28,97)(29,96)(30,99)(31,98)(32,94)(33,95)(34,81)(35,80)(36,79)
(37,85)(38,84)(39,87)(40,86)(41,82)(42,83)(43,76)(44,77)(45,78)(46,69)(47,68)(48,67)
(49,73)(50,72)(51,75)(52,74)(53,70)(54,71)(55,64)(56,65)(57,66)(58,61)(59,62)(60,63)
\end{lstlisting}

\noindent
for the colour preserving one, and 

\begin{lstlisting}[language=python]
(2,4)(3,7)(6,8)(11,12)(16,58)(17,55)(18,43)(19,48)(20,36)(22,59)(23,56)(24,44)(25,60)
(26,57)(27,45)(28,53)(29,41)(30,54)(31,42)(34,47)(37,50)(39,52)(61,93)(62,102)(63,105)
(64,92)(65,101)(66,104)(67,90)(68,81)(70,97)(71,99)(72,85)(74,87)(76,91)(77,100)(78,103)
(79,89)(82,96)(83,98)(107,109)(108,112)(111,113)(116,117)
\end{lstlisting}

\noindent
for the one shuffling colours.
Orbit representatives are then given for the first four colours $0$ to $3$ as follows:

\begin{lstlisting}[language=python]
Colour 0:
(1,58),(2,59),(3,60),(4,55),(5,56),(6,57),(7,48),(8,47),(9,46),(10,53),(11,54),(12,50),
(13,49),(14,52),(15,51),(16,88),(17,89),(18,90),(22,79),(23,80),(24,81),(25,76),(26,77),
(27,78),(28,84),(29,85),(30,82),(31,83),(32,87),(33,86)

Colour 1:
(1,58),(2,59),(3,60),(4,7),(5,8),(6,9),(10,12),(11,13),(14,15),(16,88),(17,89),(18,90),
(22,25),(23,26),(24,27),(28,30),(29,31),(32,33),(34,46),(35,47),(36,48),(37,49),(38,50),
(39,51),(40,52),(41,53),(42,54),(43,55),(44,56),(45,57)

Colour 2:
(1,4),(2,5),(3,6),(7,43),(8,44),(9,45),(10,11),(12,14),(13,15),(16,22),(17,23),(18,24),
(19,34),(20,35),(21,36),(25,67),(26,68),(27,69),(28,37),(29,38),(30,39),(31,40),(32,41),
(33,42),(49,51),(50,52),(53,54),(55,58)(56,59),(57,60)

Colour 3:
(1,21),(2,3),(4,7),(5,10),(6,11),(8,12),(9,13),(14,32),(15,33),(16,61),(17,19),(18,20),
(22,25),(23,28),(24,29),(26,30),(27,31),(34,37),(35,38),(36,43),(39,74),(40,75),(41,44),
(42,45),(46,49),(47,50),(48,55),(53,56),(54,57),(59,60),
\end{lstlisting}

\subsection{$\mathbb{CP}^4$}

The simplicial cell decomposition of $(S^2)^4$ has $8! = 40320$ simplices of dimension eight, and is, hence, too large to be presented here. It can be generated using the code in \cite{DS2024} either as a Regina \cite{regina} (pseudo) triangulation or as a graph encoding.

The same goes for the simplicial cell decomposition of $\mathbb{CP}^4$, which has $1680$ simplices. We choose the representation
as a graph encoding. Moreover, we use the fact that the simplicial cell decomposition has two automorphisms
of order $2$. The first leaves colours invariant, the second permutes colour $i$ to 
$8-i$, $0 \leq i \leq 8$.

The automorphisms are given as

\begin{lstlisting}[language=python]
(   1,1576)(   2,1577)(   3,1578)(   4,1579)(   5,1580)(   6,1581)(   7,1582)(   8,1583)
(   9,1584)(  10,1585)(  11,1586)(  12,1587)(  13,1588)(  14,1589)(  15,1590)(  16,1591)
(  17,1592)(  18,1593)(  19,1594)(  20,1595)(  21,1596)(  22,1597)(  23,1598)(  24,1599)
(  25,1600)(  26,1601)(  27,1602)(  28,1603)(  29,1604)(  30,1605)(  31,1606)(  32,1607)
(  33,1608)(  34,1609)(  35,1610)(  36,1611)(  37,1612)(  38,1613)(  39,1614)(  40,1615)
(  41,1616)(  42,1617)(  43,1618)(  44,1619)(  45,1620)(  46,1621)(  47,1622)(  48,1623)
(  49,1624)(  50,1625)(  51,1626)(  52,1627)(  53,1628)(  54,1629)(  55,1630)(  56,1631)
(  57,1632)(  58,1633)(  59,1634)(  60,1635)(  61,1636)(  62,1637)(  63,1638)(  64,1639)
(  65,1640)(  66,1641)(  67,1642)(  68,1643)(  69,1644)(  70,1645)(  71,1646)(  72,1647)
(  73,1648)(  74,1649)(  75,1650)(  76,1651)(  77,1652)(  78,1653)(  79,1654)(  80,1655)
(  81,1656)(  82,1657)(  83,1658)(  84,1659)(  85,1660)(  86,1661)(  87,1662)(  88,1663)
(  89,1664)(  90,1665)(  91,1666)(  92,1667)(  93,1668)(  94,1669)(  95,1670)(  96,1671)
(  97,1672)(  98,1673)(  99,1674)( 100,1675)( 101,1676)( 102,1677)( 103,1678)( 104,1679)
( 105,1680)( 106,1383)( 107,1382)( 108,1381)( 109,1380)( 110,1379)( 111,1378)( 112,1392)
( 113,1391)( 114,1390)( 115,1395)( 116,1394)( 117,1393)( 118,1387)( 119,1386)( 120,1389)
( 121,1388)( 122,1384)( 123,1385)( 124,1371)( 125,1370)( 126,1369)( 127,1375)( 128,1374)
( 129,1377)( 130,1376)( 131,1372)( 132,1373)( 133,1366)( 134,1367)( 135,1368)( 136,1359)
( 137,1358)( 138,1357)( 139,1363)( 140,1362)( 141,1365)( 142,1364)( 143,1360)( 144,1361)
( 145,1354)( 146,1355)( 147,1356)( 148,1351)( 149,1352)( 150,1353)( 151,1473)( 152,1472)
( 153,1471)( 154,1470)( 155,1469)( 156,1468)( 157,1482)( 158,1481)( 159,1480)( 160,1485)
( 161,1484)( 162,1483)( 163,1477)( 164,1476)( 165,1479)( 166,1478)( 167,1474)( 168,1475)
( 169,1461)( 170,1460)( 171,1459)( 172,1465)( 173,1464)( 174,1467)( 175,1466)( 176,1462)
( 177,1463)( 178,1456)( 179,1457)( 180,1458)( 181,1503)( 182,1502)( 183,1501)( 184,1500)
( 185,1499)( 186,1498)( 187,1512)( 188,1511)( 189,1510)( 190,1515)( 191,1514)( 192,1513)
( 193,1507)( 194,1506)( 195,1509)( 196,1508)( 197,1504)( 198,1505)( 199,1491)( 200,1490)
( 201,1489)( 202,1495)( 203,1494)( 204,1497)( 205,1496)( 206,1492)( 207,1493)( 208,1486)
( 209,1487)( 210,1488)( 211,1548)( 212,1547)( 213,1546)( 214,1551)( 215,1550)( 216,1549)
( 217,1543)( 218,1542)( 219,1545)( 220,1544)( 221,1540)( 222,1541)( 223,1560)( 224,1559)
( 225,1558)( 226,1563)( 227,1562)( 228,1561)( 229,1555)( 230,1554)( 231,1557)( 232,1556)
( 233,1552)( 234,1553)( 235,1572)( 236,1571)( 237,1570)( 238,1575)( 239,1574)( 240,1573)
( 241,1567)( 242,1566)( 243,1569)( 244,1568)( 245,1564)( 246,1565)( 247,1525)( 248,1524)
( 249,1527)( 250,1526)( 251,1522)( 252,1523)( 253,1531)( 254,1530)( 255,1533)( 256,1532)
( 257,1528)( 258,1529)( 259,1537)( 260,1536)( 261,1539)( 262,1538)( 263,1534)( 264,1535)
( 265,1516)( 266,1517)( 267,1518)( 268,1519)( 269,1520)( 270,1521)( 271,1413)( 272,1412)
( 273,1411)( 274,1417)( 275,1416)( 276,1419)( 277,1418)( 278,1414)( 279,1415)( 280,1408)
( 281,1409)( 282,1410)( 283,1425)( 284,1424)( 285,1423)( 286,1429)( 287,1428)( 288,1431)
( 289,1430)( 290,1426)( 291,1427)( 292,1420)( 293,1421)( 294,1422)( 295,1441)( 296,1440)
( 297,1443)( 298,1442)( 299,1438)( 300,1439)( 301,1447)( 302,1446)( 303,1449)( 304,1448)
( 305,1444)( 306,1445)( 307,1453)( 308,1452)( 309,1455)( 310,1454)( 311,1450)( 312,1451)
( 313,1432)( 314,1433)( 315,1434)( 316,1435)( 317,1436)( 318,1437)( 319,1396)( 320,1397)
( 321,1398)( 322,1399)( 323,1400)( 324,1401)( 325,1402)( 326,1403)( 327,1404)( 328,1405)
( 329,1406)( 330,1407)( 331,1284)( 332,1283)( 333,1282)( 334,1288)( 335,1287)( 336,1290)
( 337,1289)( 338,1285)( 339,1286)( 340,1279)( 341,1280)( 342,1281)( 343,1276)( 344,1277)
( 345,1278)( 346,1308)( 347,1307)( 348,1306)( 349,1312)( 350,1311)( 351,1314)( 352,1313)
( 353,1309)( 354,1310)( 355,1303)( 356,1304)( 357,1305)( 358,1320)( 359,1319)( 360,1318)
( 361,1324)( 362,1323)( 363,1326)( 364,1325)( 365,1321)( 366,1322)( 367,1315)( 368,1316)
( 369,1317)( 370,1336)( 371,1335)( 372,1338)( 373,1337)( 374,1333)( 375,1334)( 376,1342)
( 377,1341)( 378,1344)( 379,1343)( 380,1339)( 381,1340)( 382,1348)( 383,1347)( 384,1350)
( 385,1349)( 386,1345)( 387,1346)( 388,1327)( 389,1328)( 390,1329)( 391,1330)( 392,1331)
( 393,1332)( 394,1291)( 395,1292)( 396,1293)( 397,1294)( 398,1295)( 399,1296)( 400,1297)
( 401,1298)( 402,1299)( 403,1300)( 404,1301)( 405,1302)( 406,1261)( 407,1262)( 408,1263)
( 409,1264)( 410,1265)( 411,1266)( 412,1267)( 413,1268)( 414,1269)( 415,1270)( 416,1271)
( 417,1272)( 418,1273)( 419,1274)( 420,1275)( 421,1194)( 422,1193)( 423,1192)( 424,1198)
( 425,1197)( 426,1200)( 427,1199)( 428,1195)( 429,1196)( 430,1189)( 431,1190)( 432,1191)
( 433,1186)( 434,1187)( 435,1188)( 436,1218)( 437,1217)( 438,1216)( 439,1222)( 440,1221)
( 441,1224)( 442,1223)( 443,1219)( 444,1220)( 445,1213)( 446,1214)( 447,1215)( 448,1230)
( 449,1229)( 450,1228)( 451,1234)( 452,1233)( 453,1236)( 454,1235)( 455,1231)( 456,1232)
( 457,1225)( 458,1226)( 459,1227)( 460,1246)( 461,1245)( 462,1248)( 463,1247)( 464,1243)
( 465,1244)( 466,1252)( 467,1251)( 468,1254)( 469,1253)( 470,1249)( 471,1250)( 472,1258)
( 473,1257)( 474,1260)( 475,1259)( 476,1255)( 477,1256)( 478,1237)( 479,1238)( 480,1239)
( 481,1240)( 482,1241)( 483,1242)( 484,1201)( 485,1202)( 486,1203)( 487,1204)( 488,1205)
( 489,1206)( 490,1207)( 491,1208)( 492,1209)( 493,1210)( 494,1211)( 495,1212)( 496,1171)
( 497,1172)( 498,1173)( 499,1174)( 500,1175)( 501,1176)( 502,1177)( 503,1178)( 504,1179)
( 505,1180)( 506,1181)( 507,1182)( 508,1183)( 509,1184)( 510,1185)( 511,1156)( 512,1157)
( 513,1158)( 514,1159)( 515,1160)( 516,1161)( 517,1162)( 518,1163)( 519,1164)( 520,1165)
( 521,1166)( 522,1167)( 523,1168)( 524,1169)( 525,1170)( 526,1063)( 527,1064)( 528,1065)
( 529,1060)( 530,1061)( 531,1062)( 532,1053)( 533,1052)( 534,1051)( 535,1058)( 536,1059)
( 537,1055)( 538,1054)( 539,1057)( 540,1056)( 541,1048)( 542,1049)( 543,1050)( 544,1041)
( 545,1040)( 546,1039)( 547,1046)( 548,1047)( 549,1043)( 550,1042)( 551,1045)( 552,1044)
( 553,1026)( 554,1025)( 555,1024)( 556,1023)( 557,1022)( 558,1021)( 559,1037)( 560,1038)
( 561,1034)( 562,1033)( 563,1036)( 564,1035)( 565,1029)( 566,1028)( 567,1027)( 568,1032)
( 569,1031)( 570,1030)( 571,1138)( 572,1139)( 573,1140)( 574,1135)( 575,1136)( 576,1137)
( 577,1128)( 578,1127)( 579,1126)( 580,1133)( 581,1134)( 582,1130)( 583,1129)( 584,1132)
( 585,1131)( 586,1153)( 587,1154)( 588,1155)( 589,1150)( 590,1151)( 591,1152)( 592,1143)
( 593,1142)( 594,1141)( 595,1148)( 596,1149)( 597,1145)( 598,1144)( 599,1147)( 600,1146)
( 601,1111)( 602,1112)( 603,1113)( 604,1104)( 605,1103)( 606,1102)( 607,1109)( 608,1110)
( 609,1106)( 610,1105)( 611,1108)( 612,1107)( 613,1123)( 614,1124)( 615,1125)( 616,1116)
( 617,1115)( 618,1114)( 619,1121)( 620,1122)( 621,1118)( 622,1117)( 623,1120)( 624,1119)
( 625,1074)( 626,1073)( 627,1072)( 628,1077)( 629,1076)( 630,1075)( 631,1069)( 632,1068)
( 633,1071)( 634,1070)( 635,1066)( 636,1067)( 637,1094)( 638,1095)( 639,1091)( 640,1090)
( 641,1093)( 642,1092)( 643,1100)( 644,1101)( 645,1097)( 646,1096)( 647,1099)( 648,1098)
( 649,1081)( 650,1080)( 651,1083)( 652,1082)( 653,1078)( 654,1079)( 655,1087)( 656,1086)
( 657,1089)( 658,1088)( 659,1084)( 660,1085)( 661, 958)( 662, 959)( 663, 960)( 664, 951)
( 665, 950)( 666, 949)( 667, 956)( 668, 957)( 669, 953)( 670, 952)( 671, 955)( 672, 954)
( 673, 936)( 674, 935)( 675, 934)( 676, 933)( 677, 932)( 678, 931)( 679, 947)( 680, 948)
( 681, 944)( 682, 943)( 683, 946)( 684, 945)( 685, 939)( 686, 938)( 687, 937)( 688, 942)
( 689, 941)( 690, 940)( 691,1006)( 692,1007)( 693,1008)( 694, 999)( 695, 998)( 696, 997)
( 697,1004)( 698,1005)( 699,1001)( 700,1000)( 701,1003)( 702,1002)( 703,1018)( 704,1019)
( 705,1020)( 706,1011)( 707,1010)( 708,1009)( 709,1016)( 710,1017)( 711,1013)( 712,1012)
( 713,1015)( 714,1014)( 715, 969)( 716, 968)( 717, 967)( 718, 972)( 719, 971)( 720, 970)
( 721, 964)( 722, 963)( 723, 966)( 724, 965)( 725, 961)( 726, 962)( 727, 989)( 728, 990)
( 729, 986)( 730, 985)( 731, 988)( 732, 987)( 733, 995)( 734, 996)( 735, 992)( 736, 991)
( 737, 994)( 738, 993)( 739, 976)( 740, 975)( 741, 978)( 742, 977)( 743, 973)( 744, 974)
( 745, 982)( 746, 981)( 747, 984)( 748, 983)( 749, 979)( 750, 980)( 751, 858)( 752, 857)
( 753, 856)( 754, 855)( 755, 854)( 756, 853)( 757, 867)( 758, 866)( 759, 865)( 760, 870)
( 761, 869)( 762, 868)( 763, 862)( 764, 861)( 765, 864)( 766, 863)( 767, 859)( 768, 860)
( 769, 846)( 770, 845)( 771, 844)( 772, 850)( 773, 849)( 774, 852)( 775, 851)( 776, 847)
( 777, 848)( 778, 841)( 779, 842)( 780, 843)( 781, 905)( 782, 906)( 783, 902)( 784, 901)
( 785, 904)( 786, 903)( 787, 897)( 788, 896)( 789, 895)( 790, 900)( 791, 899)( 792, 898)
( 793, 923)( 794, 924)( 795, 920)( 796, 919)( 797, 922)( 798, 921)( 799, 929)( 800, 930)
( 801, 926)( 802, 925)( 803, 928)( 804, 927)( 805, 910)( 806, 909)( 807, 912)( 808, 911)
( 809, 907)( 810, 908)( 811, 916)( 812, 915)( 813, 918)( 814, 917)( 815, 913)( 816, 914)
( 817, 876)( 818, 875)( 819, 874)( 820, 880)( 821, 879)( 822, 882)( 823, 881)( 824, 877)
( 825, 878)( 826, 871)( 827, 872)( 828, 873)( 829, 888)( 830, 887)( 831, 886)( 832, 892)
( 833, 891)( 834, 894)( 835, 893)( 836, 889)( 837, 890)( 838, 883)( 839, 884)( 840, 885)
\end{lstlisting}

\noindent
for the colour preserving one, and 

\begin{lstlisting}[language=python]
(   2,  16)(   3,  31)(   5,  19)(   6,  34)(   8,  22)(   9,  37)(  10,  46)(  11,  58)
(  12,  49)(  13,  61)(  14,  70)(  15,  73)(  18,  32)(  21,  35)(  24,  38)(  25,  47)
(  26,  59)(  27,  50)(  28,  62)(  29,  71)(  30,  74)(  40,  48)(  41,  60)(  42,  51)
(  43,  63)(  44,  72)(  45,  75)(  53,  64)(  55,  66)(  56,  76)(  57,  78)(  68,  77)
(  69,  79)(  83,  88)(  85,  90)(  86,  94)(  87,  96)(  92,  95)(  93,  97)( 101, 102)
( 106, 511)( 107, 496)( 108, 406)( 109, 433)( 110, 343)( 111, 148)( 112, 514)( 113, 499)
( 114, 409)( 115, 517)( 116, 502)( 117, 412)( 118, 484)( 119, 394)( 120, 487)( 121, 397)
( 122, 319)( 123, 322)( 124, 430)( 125, 340)( 126, 145)( 127, 445)( 128, 355)( 129, 457)
( 130, 367)( 131, 280)( 132, 292)( 134, 178)( 135, 208)( 136, 423)( 137, 333)( 139, 438)
( 140, 348)( 141, 450)( 142, 360)( 143, 273)( 144, 285)( 146, 171)( 147, 201)( 149, 156)
( 150, 186)( 151, 512)( 152, 497)( 153, 407)( 154, 434)( 155, 344)( 157, 515)( 158, 500)
( 159, 410)( 160, 518)( 161, 503)( 162, 413)( 163, 485)( 164, 395)( 165, 488)( 166, 398)
( 167, 320)( 168, 323)( 169, 431)( 170, 341)( 172, 446)( 173, 356)( 174, 458)( 175, 368)
( 176, 281)( 177, 293)( 180, 209)( 181, 513)( 182, 498)( 183, 408)( 184, 435)( 185, 345)
( 187, 516)( 188, 501)( 189, 411)( 190, 519)( 191, 504)( 192, 414)( 193, 486)( 194, 396)
( 195, 489)( 196, 399)( 197, 321)( 198, 324)( 199, 432)( 200, 342)( 202, 447)( 203, 357)
( 204, 459)( 205, 369)( 206, 282)( 207, 294)( 211, 520)( 212, 505)( 213, 415)( 214, 522)
( 215, 507)( 216, 417)( 217, 490)( 218, 400)( 219, 492)( 220, 402)( 221, 325)( 222, 327)
( 223, 521)( 224, 506)( 225, 416)( 226, 523)( 227, 508)( 228, 418)( 229, 491)( 230, 401)
( 231, 493)( 232, 403)( 233, 326)( 234, 328)( 235, 524)( 236, 509)( 237, 419)( 238, 525)
( 239, 510)( 240, 420)( 241, 494)( 242, 404)( 243, 495)( 244, 405)( 245, 329)( 246, 330)
( 247, 478)( 248, 388)( 249, 480)( 250, 390)( 251, 313)( 252, 315)( 253, 479)( 254, 389)
( 255, 481)( 256, 391)( 257, 314)( 258, 316)( 259, 482)( 260, 392)( 261, 483)( 262, 393)
( 263, 317)( 264, 318)( 266, 267)( 271, 428)( 272, 338)( 274, 443)( 275, 353)( 276, 455)
( 277, 365)( 279, 290)( 283, 429)( 284, 339)( 286, 444)( 287, 354)( 288, 456)( 289, 366)
( 295, 464)( 296, 374)( 297, 470)( 298, 380)( 300, 305)( 301, 465)( 302, 375)( 303, 471)
( 304, 381)( 307, 476)( 308, 386)( 309, 477)( 310, 387)( 331, 422)( 334, 437)( 335, 347)
( 336, 449)( 337, 359)( 346, 425)( 349, 440)( 351, 452)( 352, 362)( 358, 427)( 361, 442)
( 363, 454)( 370, 461)( 372, 467)( 373, 377)( 376, 463)( 378, 469)( 382, 473)( 384, 475)
( 424, 436)( 426, 448)( 441, 451)( 462, 466)( 526,1063)( 527,1138)( 528,1153)( 529,1060)
( 530,1135)( 531,1150)( 532,1048)( 533, 958)( 534, 778)( 535,1111)( 536,1123)( 537,1006)
( 538, 826)( 539,1018)( 540, 838)( 541,1053)( 542,1128)( 543,1143)( 544,1041)( 545, 951)
( 546, 771)( 547,1104)( 548,1116)( 549, 999)( 550, 819)( 551,1011)( 552, 831)( 553,1026)
( 554, 936)( 555, 756)( 556, 858)( 557, 678)( 559,1074)( 560,1077)( 561, 969)( 562, 789)
( 563, 972)( 564, 792)( 565, 867)( 566, 687)( 568, 870)( 569, 690)( 571,1064)( 572,1139)
( 573,1154)( 574,1061)( 575,1136)( 576,1151)( 577,1049)( 578, 959)( 579, 779)( 580,1112)
( 581,1124)( 582,1007)( 583, 827)( 584,1019)( 585, 839)( 586,1065)( 587,1140)( 588,1155)
( 589,1062)( 590,1137)( 591,1152)( 592,1050)( 593, 960)( 594, 780)( 595,1113)( 596,1125)
( 597,1008)( 598, 828)( 599,1020)( 600, 840)( 601,1058)( 602,1133)( 603,1148)( 604,1046)
( 605, 956)( 606, 776)( 607,1109)( 608,1121)( 609,1004)( 610, 824)( 611,1016)( 612, 836)
( 613,1059)( 614,1134)( 615,1149)( 616,1047)( 617, 957)( 618, 777)( 619,1110)( 620,1122)
( 621,1005)( 622, 825)( 623,1017)( 624, 837)( 625,1037)( 626, 947)( 627, 767)( 628,1038)
( 629, 948)( 630, 768)( 631, 905)( 632, 725)( 633, 906)( 634, 726)( 637,1094)( 638,1100)
( 639, 989)( 640, 809)( 641, 995)( 642, 815)( 643,1095)( 644,1101)( 645, 990)( 646, 810)
( 647, 996)( 648, 816)( 649, 923)( 650, 743)( 651, 924)( 652, 744)( 655, 929)( 656, 749)
( 657, 930)( 658, 750)( 661,1052)( 662,1127)( 663,1142)( 664,1040)( 665, 950)( 666, 770)
( 667,1103)( 668,1115)( 669, 998)( 670, 818)( 671,1010)( 672, 830)( 673,1025)( 674, 935)
( 675, 755)( 676, 857)( 679,1073)( 680,1076)( 681, 968)( 682, 788)( 683, 971)( 684, 791)
( 685, 866)( 688, 869)( 691,1055)( 692,1130)( 693,1145)( 694,1043)( 695, 953)( 696, 773)
( 697,1106)( 698,1118)( 699,1001)( 700, 821)( 701,1013)( 702, 833)( 703,1057)( 704,1132)
( 705,1147)( 706,1045)( 707, 955)( 708, 775)( 709,1108)( 710,1120)( 711,1003)( 712, 823)
( 713,1015)( 714, 835)( 715,1034)( 716, 944)( 717, 764)( 718,1036)( 719, 946)( 720, 766)
( 721, 902)( 723, 904)( 727,1091)( 728,1097)( 729, 986)( 730, 806)( 731, 992)( 732, 812)
( 733,1093)( 734,1099)( 735, 988)( 736, 808)( 737, 994)( 738, 814)( 739, 920)( 741, 922)
( 745, 926)( 747, 928)( 751,1023)( 752, 933)( 754, 855)( 757,1029)( 758, 939)( 760,1032)
( 761, 942)( 763, 897)( 765, 900)( 769, 846)( 772, 876)( 774, 888)( 781,1069)( 782,1071)
( 783, 964)( 785, 966)( 787, 862)( 790, 864)( 793,1081)( 794,1083)( 795, 976)( 797, 978)
( 799,1087)( 800,1089)( 801, 982)( 803, 984)( 805, 910)( 807, 916)( 811, 912)( 813, 918)
( 817, 850)( 820, 880)( 822, 892)( 829, 852)( 832, 882)( 834, 894)( 841,1051)( 842,1126)
( 843,1141)( 844,1039)( 845, 949)( 847,1102)( 848,1114)( 849, 997)( 851,1009)( 853,1024)
( 854, 934)( 859,1072)( 860,1075)( 861, 967)( 863, 970)( 871,1054)( 872,1129)( 873,1144)
( 874,1042)( 875, 952)( 877,1105)( 878,1117)( 879,1000)( 881,1012)( 883,1056)( 884,1131)
( 885,1146)( 886,1044)( 887, 954)( 889,1107)( 890,1119)( 891,1002)( 893,1014)( 895,1033)
( 896, 943)( 898,1035)( 899, 945)( 907,1090)( 908,1096)( 909, 985)( 911, 991)( 913,1092)
( 914,1098)( 915, 987)( 917, 993)( 931,1022)( 937,1028)( 940,1031)( 961,1068)( 962,1070)
( 973,1080)( 974,1082)( 979,1086)( 980,1088)(1156,1383)(1157,1473)(1158,1503)(1159,1392)
(1160,1482)(1161,1512)(1162,1395)(1163,1485)(1164,1515)(1165,1548)(1166,1560)(1167,1551)
(1168,1563)(1169,1572)(1170,1575)(1171,1382)(1172,1472)(1173,1502)(1174,1391)(1175,1481)
(1176,1511)(1177,1394)(1178,1484)(1179,1514)(1180,1547)(1181,1559)(1182,1550)(1183,1562)
(1184,1571)(1185,1574)(1186,1380)(1187,1470)(1188,1500)(1189,1371)(1190,1461)(1191,1491)
(1192,1359)(1193,1284)(1195,1413)(1196,1425)(1197,1308)(1198,1218)(1199,1320)(1200,1230)
(1201,1387)(1202,1477)(1203,1507)(1204,1389)(1205,1479)(1206,1509)(1207,1543)(1208,1555)
(1209,1545)(1210,1557)(1211,1567)(1212,1569)(1213,1375)(1214,1465)(1215,1495)(1216,1363)
(1217,1288)(1219,1417)(1220,1429)(1221,1312)(1223,1324)(1224,1234)(1225,1377)(1226,1467)
(1227,1497)(1228,1365)(1229,1290)(1231,1419)(1232,1431)(1233,1314)(1235,1326)(1237,1525)
(1238,1531)(1239,1527)(1240,1533)(1241,1537)(1242,1539)(1243,1441)(1244,1447)(1245,1336)
(1247,1342)(1248,1252)(1249,1443)(1250,1449)(1251,1338)(1253,1344)(1255,1453)(1256,1455)
(1257,1348)(1259,1350)(1261,1381)(1262,1471)(1263,1501)(1264,1390)(1265,1480)(1266,1510)
(1267,1393)(1268,1483)(1269,1513)(1270,1546)(1271,1558)(1272,1549)(1273,1561)(1274,1570)
(1275,1573)(1276,1379)(1277,1469)(1278,1499)(1279,1370)(1280,1460)(1281,1490)(1282,1358)
(1285,1412)(1286,1424)(1287,1307)(1289,1319)(1291,1386)(1292,1476)(1293,1506)(1294,1388)
(1295,1478)(1296,1508)(1297,1542)(1298,1554)(1299,1544)(1300,1556)(1301,1566)(1302,1568)
(1303,1374)(1304,1464)(1305,1494)(1306,1362)(1309,1416)(1310,1428)(1313,1323)(1315,1376)
(1316,1466)(1317,1496)(1318,1364)(1321,1418)(1322,1430)(1327,1524)(1328,1530)(1329,1526)
(1330,1532)(1331,1536)(1332,1538)(1333,1440)(1334,1446)(1337,1341)(1339,1442)(1340,1448)
(1345,1452)(1346,1454)(1351,1378)(1352,1468)(1353,1498)(1354,1369)(1355,1459)(1356,1489)
(1360,1411)(1361,1423)(1367,1456)(1368,1486)(1372,1408)(1373,1420)(1384,1396)(1385,1399)
(1397,1474)(1398,1504)(1400,1475)(1401,1505)(1402,1540)(1403,1552)(1404,1541)(1405,1553)
(1406,1564)(1407,1565)(1409,1462)(1410,1492)(1415,1426)(1421,1463)(1422,1493)(1432,1522)
(1433,1528)(1434,1523)(1435,1529)(1436,1534)(1437,1535)(1439,1444)(1458,1487)(1517,1518)
(1577,1591)(1578,1606)(1580,1594)(1581,1609)(1583,1597)(1584,1612)(1585,1621)(1586,1633)
(1587,1624)(1588,1636)(1589,1645)(1590,1648)(1593,1607)(1596,1610)(1599,1613)(1600,1622)
(1601,1634)(1602,1625)(1603,1637)(1604,1646)(1605,1649)(1615,1623)(1616,1635)(1617,1626)
(1618,1638)(1619,1647)(1620,1650)(1628,1639)(1630,1641)(1631,1651)(1632,1653)(1643,1652)
(1644,1654)(1658,1663)(1660,1665)(1661,1669)(1662,1671)(1667,1670)(1668,1672)(1676,1677)
\end{lstlisting}

\noindent
for the one shuffling colours.
Orbit representatives are then given for the first five colours $0$ to $4$ as follows:

\begin{lstlisting}[language=python]
Colour 0:
(1,511),(2,512),(3,513),(4,514),(5,515),(6,516),(7,517),(8,518),(9,519),(10,520),(11,521),
(12,522),(13,523),(14,524),(15,525),(16,496),(17,497),(18,498),(19,499),(20,500),(21,501),
(22,502),(23,503),(24,504),(25,505),(26,506),(27,507),(28,508),(29,509),(30,510),(31,433),
(32,434),(33,435),(34,430),(35,431),(36,432),(37,423),(38,422),(39,421),(40,428),(41,429),
(42,425),(43,424),(44,427),(45,426),(46,484),(47,485),(48,486),(49,487),(50,488),(51,489),
(52,490),(53,491),(54,492),(55,493),(56,494),(57,495),(58,445),(59,446),(60,447),(61,438),
(62,437),(63,436),(64,443),(65,444),(66,440),(67,439),(68,442),(69,441),(70,457),(71,458),
(72,459),(73,450),(74,449),(75,448),(76,455),(77,456),(78,452),(79,451),(80,454),(81,453),
(82,478),(83,479),(84,480),(85,481),(86,482),(87,483),(88,464),(89,465),(90,461),(91,460),
(92,463),(93,462),(94,470),(95,471),(96,467),(97,466),(98,469),(99,468),(100,476),
(101,477),(102,473),(103,472),(104,475),(105,474),(106,1021),(107,1022),(108,1023),
(109,1024),(110,1025),(111,1026),(112,1027),(113,1028),(114,1029),(115,1030),(116,1031),
(117,1032),(118,1033),(119,1034),(120,1035),(121,1036),(122,1037),(123,1038),(124,1039),
(125,1040),(126,1041),(127,1042),(128,1043),(129,1044),(130,1045),(131,1046),(132,1047),
(133,1048),(134,1049),(135,1050),(136,1051),(137,1052),(138,1053),(139,1054),(140,1055),
(141,1056),(142,1057),(143,1058),(144,1059),(145,1060),(146,1061),(147,1062),(148,1063),
(149,1064),(150,1065),(151,931),(152,932),(153,933),(154,934),(155,935),(156,936),
(157,937),(158,938),(159,939),(160,940),(161,941),(162,942),(163,943),(164,944),(165,945),
(166,946),(167,947),(168,948),(169,949),(170,950),(171,951),(172,952),(173,953),(174,954),
(175,955),(176,956),(177,957),(178,958),(179,959),(180,960),(181,853),(182,854),(183,855),
(184,856),(185,857),(186,858),(187,844),(188,845),(189,846),(190,841),(191,842),(192,843),
(193,849),(194,850),(195,847),(196,848),(197,852),(198,851),(199,865),(200,866),(201,867),
(202,861),(203,862),(204,859),(205,860),(206,864),(207,863),(208,870),(209,869),(210,868),
(211,895),(212,896),(213,897),(214,898),(215,899),(216,900),(217,901),(218,902),(219,903),
(220,904),(221,905),(222,906),(223,874),(224,875),(225,876),(226,871),(227,872),(228,873),
(229,879),(230,880),(231,877),(232,878),(233,882),(234,881),(235,886),(236,887),(237,888),
(238,883),(239,884),(240,885),(241,891),(242,892),(243,889),(244,890),(245,894),(246,893),
(247,919),(248,920),(249,921),(250,922),(251,923),(252,924),(253,909),(254,910),(255,907),
(256,908),(257,912),(258,911),(259,915),(260,916),(261,913),(262,914),(263,918),(264,917),
(265,929),(266,930),(267,926),(268,925),(269,928),(270,927),(271,997),(272,998),(273,999),
(274,1000),(275,1001),(276,1002),(277,1003),(278,1004),(279,1005),(280,1006),(281,1007),
(282,1008),(283,967),(284,968),(285,969),(286,963),(287,964),(288,961),(289,962),
(290,966),(291,965),(292,972),(293,971),(294,970),(295,985),(296,986),(297,987),(298,988),
(299,989),(300,990),(301,975),(302,976),(303,973),(304,974),(305,978),(306,977),(307,981),
(308,982),(309,979),(310,980),(311,984),(312,983),(313,995),(314,996),(315,992),(316,991),
(317,994),(318,993),(319,1018),(320,1019),(321,1020),(322,1011),(323,1010),(324,1009),
(325,1016),(326,1017),(327,1013),(328,1012),(329,1015),(330,1014),(331,1126),(332,1127),
(333,1128),(334,1129),(335,1130),(336,1131),(337,1132),(338,1133),(339,1134),(340,1135),
(341,1136),(342,1137),(343,1138),(344,1139),(345,1140),(346,1102),(347,1103),(348,1104),
(349,1105),(350,1106),(351,1107),(352,1108),(353,1109),(354,1110),(355,1111),(356,1112),
(357,1113),(358,1072),(359,1073),(360,1074),(361,1068),(362,1069),(363,1066),(364,1067),
(365,1071),(366,1070),(367,1077),(368,1076),(369,1075),(370,1090),(371,1091),(372,1092),
(373,1093),(374,1094),(375,1095),(376,1080),(377,1081),(378,1078),(379,1079),(380,1083),
(381,1082),(382,1086),(383,1087),(384,1084),(385,1085),(386,1089),(387,1088),(388,1100),
(389,1101),(390,1097),(391,1096),(392,1099),(393,1098),(394,1123),(395,1124),(396,1125),
(397,1116),(398,1115),(399,1114),(400,1121),(401,1122),(402,1118),(403,1117),(404,1120),
(405,1119),(406,1153),(407,1154),(408,1155),(409,1150),(410,1151),(411,1152),(412,1143),
(413,1142),(414,1141),(415,1148),(416,1149),(417,1145),(418,1144),(419,1147),(420,1146)

\end{lstlisting}

\begin{lstlisting}[language=python]
Colour 1:
(1,511),(2,512),(3,513),(4,514),(5,515),(6,516),(7,517),(8,518),(9,519),(10,520),(11,521),
(12,522),(13,523),(14,524),(15,525),(16,31),(17,32),(18,33),(19,34),(20,35),(21,36),
(22,37),(23,38),(24,39),(25,40),(26,41),(27,42),(28,43),(29,44),(30,45),(46,58),(47,59),
(48,60),(49,61),(50,62),(51,63),(52,64),(53,65),(54,66),(55,67),(56,68),(57,69),(70,73),
(71,74),(72,75),(76,78),(77,79),(80,81),(82,88),(83,89),(84,90),(85,91),(86,92),(87,93),
(94,96),(95,97),(98,99),(100,102),(101,103),(104,105),(106,1021),(107,1022),(108,1023),
(109,1024),(110,1025),(111,1026),(112,1027),(113,1028),(114,1029),(115,1030),(116,1031),
(117,1032),(118,1033),(119,1034),(120,1035),(121,1036),(122,1037),(123,1038),(124,1039),
(125,1040),(126,1041),(127,1042),(128,1043),(129,1044),(130,1045),(131,1046),(132,1047),
(133,1048),(134,1049),(135,1050),(136,1051),(137,1052),(138,1053),(139,1054),(140,1055),
(141,1056),(142,1057),(143,1058),(144,1059),(145,1060),(146,1061),(147,1062),(148,1063),
(149,1064),(150,1065),(151,181),(152,182),(153,183),(154,184),(155,185),(156,186),
(157,187),(158,188),(159,189),(160,190),(161,191),(162,192),(163,193),(164,194),(165,195),
(166,196),(167,197),(168,198),(169,199),(170,200),(171,201),(172,202),(173,203),(174,204),
(175,205),(176,206),(177,207),(178,208),(179,209),(180,210),(211,223),(212,224),(213,225),
(214,226),(215,227),(216,228),(217,229),(218,230),(219,231),(220,232),(221,233),(222,234),
(235,238),(236,239),(237,240),(241,243),(242,244),(245,246),(247,253),(248,254),(249,255),
(250,256),(251,257),(252,258),(259,261),(260,262),(263,264),(265,267),(266,268),(269,270),
(271,283),(272,284),(273,285),(274,286),(275,287),(276,288),(277,289),(278,290),(279,291),
(280,292),(281,293),(282,294),(295,301),(296,302),(297,303),(298,304),(299,305),(300,306),
(307,309),(308,310),(311,312),(313,315),(314,316),(317,318),(319,322),(320,323),(321,324),
(325,327),(326,328),(329,330),(331,421),(332,422),(333,423),(334,424),(335,425),(336,426),
(337,427),(338,428),(339,429),(340,430),(341,431),(342,432),(343,433),(344,434),(345,435),
(346,436),(347,437),(348,438),(349,439),(350,440),(351,441),(352,442),(353,443),(354,444),
(355,445),(356,446),(357,447),(358,448),(359,449),(360,450),(361,451),(362,452),(363,453),
(364,454),(365,455),(366,456),(367,457),(368,458),(369,459),(370,460),(371,461),(372,462),
(373,463),(374,464),(375,465),(376,466),(377,467),(378,468),(379,469),(380,470),(381,471),
(382,472),(383,473),(384,474),(385,475),(386,476),(387,477),(388,478),(389,479),(390,480),
(391,481),(392,482),(393,483),(394,484),(395,485),(396,486),(397,487),(398,488),(399,489),
(400,490),(401,491),(402,492),(403,493),(404,494),(405,495),(406,496),(407,497),(408,498),
(409,499),(410,500),(411,501),(412,502),(413,503),(414,504),(415,505),(416,506),(417,507),
(418,508),(419,509),(420,510),(571,586),(572,587),(573,588),(574,589),(575,590),(576,591),
(577,592),(578,593),(579,594),(580,595),(581,596),(582,597),(583,598),(584,599),(585,600),
(601,613),(602,614),(603,615),(604,616),(605,617),(606,618),(607,619),(608,620),(609,621),
(610,622),(611,623),(612,624),(625,628),(626,629),(627,630),(631,633),(632,634),(635,636),
(637,643),(638,644),(639,645),(640,646),(641,647),(642,648),(649,651),(650,652),(653,654),
(655,657),(656,658),(659,660),(661,841),(662,842),(663,843),(664,844),(665,845),(666,846),
(667,847),(668,848),(669,849),(670,850),(671,851),(672,852),(673,853),(674,854),(675,855),
(676,856),(677,857),(678,858),(679,859),(680,860),(681,861),(682,862),(683,863),(684,864),
(685,865),(686,866),(687,867),(688,868),(689,869),(690,870),(691,871),(692,872),(693,873),
(694,874),(695,875),(696,876),(697,877),(698,878),(699,879),(700,880),(701,881),(702,882),
(703,883),(704,884),(705,885),(706,886),(707,887),(708,888),(709,889),(710,890),(711,891),
(712,892),(713,893),(714,894),(715,895),(716,896),(717,897),(718,898),(719,899),(720,900),
(721,901),(722,902),(723,903),(724,904),(725,905),(726,906),(727,907),(728,908),(729,909),
(730,910),(731,911),(732,912),(733,913),(734,914),(735,915),(736,916),(737,917),(738,918),
(739,919),(740,920),(741,921),(742,922),(743,923),(744,924),(745,925),(746,926),(747,927),
(748,928),(749,929),(750,930)
\end{lstlisting}

\begin{lstlisting}[language=python]
Colour 2:
(1,16),(2,17),(3,18),(4,19),(5,20),(6,21),(7,22),(8,23),(9,24),(10,25),(11,26),(12,27),
(13,28),(14,29),(15,30),(31,406),(32,407),(33,408),(34,409),(35,410),(36,411),(37,412),
(38,413),(39,414),(40,415),(41,416),(42,417),(43,418),(44,419),(45,420),(46,49),(47,50),
(48,51),(52,54),(53,55),(56,57),(58,70),(59,71),(60,72),(61,73),(62,74),(63,75),(64,76),
(65,77),(66,78),(67,79),(68,80),(69,81),(82,84),(83,85),(86,87),(88,94),(89,95),(90,96),
(91,97),(92,98),(93,99),(100,101),(102,104),(103,105),(106,151),(107,152),(108,153),
(109,154),(110,155),(111,156),(112,157),(113,158),(114,159),(115,160),(116,161),(117,162),
(118,163),(119,164),(120,165),(121,166),(122,167),(123,168),(124,169),(125,170),(126,171),
(127,172),(128,173),(129,174),(130,175),(131,176),(132,177),(133,178),(134,179),(135,180),
(136,331),(137,332),(138,333),(139,334),(140,335),(141,336),(142,337),(143,338),(144,339),
(145,340),(146,341),(147,342),(148,343),(149,344),(150,345),(181,751),(182,752),(183,753),
(184,754),(185,755),(186,756),(187,757),(188,758),(189,759),(190,760),(191,761),(192,762),
(193,763),(194,764),(195,765),(196,766),(197,767),(198,768),(199,769),(200,770),(201,771),
(202,772),(203,773),(204,774),(205,775),(206,776),(207,777),(208,778),(209,779),(210,780),
(211,214),(212,215),(213,216),(217,219),(218,220),(221,222),(223,235),(224,236),(225,237),
(226,238),(227,239),(228,240),(229,241),(230,242),(231,243),(232,244),(233,245),(234,246),
(247,249),(248,250),(251,252),(253,259),(254,260),(255,261),(256,262),(257,263),(258,264),
(265,266),(267,269),(268,270),(271,346),(272,347),(273,348),(274,349),(275,350),(276,351),
(277,352),(278,353),(279,354),(280,355),(281,356),(282,357),(283,358),(284,359),(285,360),
(286,361),(287,362),(288,363),(289,364),(290,365),(291,366),(292,367),(293,368),(294,369),
(295,370),(296,371),(297,372),(298,373),(299,374),(300,375),(301,376),(302,377),(303,378),
(304,379),(305,380),(306,381),(307,382),(308,383),(309,384),(310,385),(311,386),(312,387),
(313,388),(314,389),(315,390),(316,391),(317,392),(318,393),(319,394),(320,395),(321,396),
(322,397),(323,398),(324,399),(325,400),(326,401),(327,402),(328,403),(329,404),(330,405),
(421,1141),(422,1142),(423,1143),(424,1144),(425,1145),(426,1146),(427,1147),(428,1148),
(429,1149),(430,1150),(431,1151),(432,1152),(433,1153),(434,1154),(435,1155),(436,448),
(437,449),(438,450),(439,451),(440,452),(441,453),(442,454),(443,455),(444,456),(445,457),
(446,458),(447,459),(460,466),(461,467),(462,468),(463,469),(464,470),(465,471),(472,474),
(473,475),(476,477),(478,480),(479,481),(482,483),(484,487),(485,488),(486,489),(490,492),
(491,493),(494,495),(496,511),(497,512),(498,513),(499,514),(500,515),(501,516),(502,517),
(503,518),(504,519),(505,520),(506,521),(507,522),(508,523),(509,524),(510,525),(526,571),
(527,572),(528,573),(529,574),(530,575),(531,576),(532,577),(533,578),(534,579),(535,580),
(536,581),(537,582),(538,583),(539,584),(540,585),(541,661),(542,662),(543,663),(544,664),
(545,665),(546,666),(547,667),(548,668),(549,669),(550,670),(551,671),(552,672),(553,673),
(554,674),(555,675),(556,676),(557,677),(558,678),(559,679),(560,680),(561,681),(562,682),
(563,683),(564,684),(565,685),(566,686),(567,687),(568,688),(569,689),(570,690),(601,691),
(602,692),(603,693),(604,694),(605,695),(606,696),(607,697),(608,698),(609,699),(610,700),
(611,701),(612,702),(613,703),(614,704),(615,705),(616,706),(617,707),(618,708),(619,709),
(620,710),(621,711),(622,712),(623,713),(624,714),(625,715),(626,716),(627,717),(628,718),
(629,719),(630,720),(631,721),(632,722),(633,723),(634,724),(635,725),(636,726),(637,727),
(638,728),(639,729),(640,730),(641,731),(642,732),(643,733),(644,734),(645,735),(646,736),
(647,737),(648,738),(649,739),(650,740),(651,741),(652,742),(653,743),(654,744),(655,745),
(656,746),(657,747),(658,748),(659,749),(660,750),(781,782),(783,785),(784,786),(787,790),
(788,791),(789,792),(793,794),(795,797),(796,798),(799,800),(801,803),(802,804),(805,811),
(806,812),(807,813),(808,814),(809,815),(810,816),(817,829),(818,830),(819,831),(820,832),
(821,833),(822,834),(823,835),(824,836),(825,837),(826,838),(827,839),(828,840)
\end{lstlisting}

\begin{lstlisting}[language=python]
Colour 3:
(1,148),(2,149),(3,150),(4,7),(5,8),(6,9),(10,12),(11,13),(14,15),(16,31),(17,32),(18,33),
(19,46),(20,47),(21,48),(22,49),(23,50),(24,51),(25,52),(26,53),(27,54),(28,55),(29,56),
(30,57),(34,58),(35,59),(36,60),(37,61),(38,62),(39,63),(40,64),(41,65),(42,66),(43,67),
(44,68),(45,69),(70,319),(71,320),(72,321),(73,322),(74,323),(75,324),(76,325),(77,326),
(78,327),(79,328),(80,329),(81,330),(82,83),(84,86),(85,87),(88,89),(90,92),(91,93),
(94,100),(95,101),(96,102),(97,103),(98,104),(99,105),(106,553),(107,554),(108,555),
(109,556),(110,557),(111,558),(112,115),(113,116),(114,117),(118,120),(119,121),(122,123),
(124,136),(125,137),(126,138),(127,139),(128,140),(129,141),(130,142),(131,143),(132,144),
(133,145),(134,146),(135,147),(151,181),(152,182),(153,183),(154,184),(155,185),(156,186),
(157,211),(158,212),(159,213),(160,214),(161,215),(162,216),(163,217),(164,218),(165,219),
(166,220),(167,221),(168,222),(169,271),(170,272),(171,273),(172,274),(173,275),(174,276),
(175,277),(176,278),(177,279),(178,280),(179,281),(180,282),(187,223),(188,224),(189,225),
(190,226),(191,227),(192,228),(193,229),(194,230),(195,231),(196,232),(197,233),(198,234),
(199,283),(200,284),(201,285),(202,286),(203,287),(204,288),(205,289),(206,290),(207,291),
(208,292),(209,293),(210,294),(235,625),(236,626),(237,627),(238,628),(239,629),(240,630),
(241,631),(242,632),(243,633),(244,634),(245,635),(246,636),(247,295),(248,296),(249,297),
(250,298),(251,299),(252,300),(253,301),(254,302),(255,303),(256,304),(257,305),(258,306),
(259,307),(260,308),(261,309),(262,310),(263,311),(264,312),(265,313),(266,314),(267,315),
(268,316),(269,317),(270,318),(331,346),(332,347),(333,348),(334,349),(335,350),(336,351),
(337,352),(338,353),(339,354),(340,355),(341,356),(342,357),(343,406),(344,407),(345,408),
(358,829),(359,830),(360,831),(361,832),(362,833),(363,834),(364,835),(365,836),(366,837),
(367,838),(368,839),(369,840),(370,372),(371,373),(374,375),(376,382),(377,383),(378,384),
(379,385),(380,386),(381,387),(388,389),(390,392),(391,393),(394,409),(395,410),(396,411),
(397,412),(398,413),(399,414),(400,415),(401,416),(402,417),(403,418),(404,419),(405,420),
(421,436),(422,437),(423,438),(424,439),(425,440),(426,441),(427,442),(428,443),(429,444),
(430,445),(431,446),(432,447),(433,496),(434,497),(435,498),(448,1009),(449,1010),
(450,1011),(451,1012),(452,1013),(453,1014),(454,1015),(455,1016),(456,1017),(457,1018),
(458,1019),(459,1020),(460,462),(461,463),(464,465),(466,472),(467,473),(468,474),
(469,475),(470,476),(471,477),(478,479),(480,482),(481,483),(484,499),(485,500),(486,501),
(487,502),(488,503),(489,504),(490,505),(491,506),(492,507),(493,508),(494,509),(495,510),
(511,1063),(512,1064),(513,1065),(514,517),(515,518),(516,519),(520,522),(521,523),
(524,525),(529,541),(530,542),(531,543),(532,544),(533,545),(534,546),(535,547),(536,548),
(537,549),(538,550),(539,551),(540,552),(559,560),(561,563),(562,564),(565,568),(566,569),
(567,570),(571,586),(572,587),(573,588),(574,601),(575,602),(576,603),(577,604),(578,605),
(579,606),(580,607),(581,608),(582,609),(583,610),(584,611),(585,612),(589,613),(590,614),
(591,615),(592,616),(593,617),(594,618),(595,619),(596,620),(597,621),(598,622),(599,623),
(600,624),(637,638),(639,641),(640,642),(643,644),(645,647),(646,648),(649,655),(650,656),
(651,657),(652,658),(653,659),(654,660),(661,691),(662,692),(663,693),(664,694),(665,695),
(666,696),(667,697),(668,698),(669,699),(670,700),(671,701),(672,702),(673,751),(674,752),
(675,753),(676,754),(677,755),(678,756),(679,781),(680,782),(681,783),(682,784),(683,785),
(684,786),(685,787),(686,788),(687,789),(688,790),(689,791),(690,792),(715,757),(716,758),
(717,759),(718,760),(719,761),(720,762),(721,763),(722,764),(723,765),(724,766),(725,767),
(726,768),(727,793),(728,794),(729,795),(730,796),(731,797),(732,798),(733,799),(734,800),
(735,801),(736,802),(737,803),(738,804),(739,805),(740,806),(741,807),(742,808),(743,809),
(744,810),(745,811),(746,812),(747,813),(748,814),(749,815),(750,816),(769,817),(770,818),
(771,819),(772,820),(773,821),(774,822),(775,823),(776,824),(777,825),(778,826),(779,827),
(780,828)
\end{lstlisting}

\begin{lstlisting}[language=python]
Colour 4:
(1,4),(2,5),(3,6),(7,133),(8,134),(9,135),(10,11),(12,14),(13,15),(16,19),(17,20),(18,21),
(22,178),(23,179),(24,180),(25,26),(27,29),(28,30),(31,34),(32,35),(33,36),(37,208),
(38,209),(39,210),(40,41),(42,44),(43,45),(46,58),(47,59),(48,60),(49,70),(50,71),(51,72),
(52,82),(53,83),(54,84),(55,85),(56,86),(57,87),(61,73),(62,74),(63,75),(64,88),(65,89),
(66,90),(67,91),(68,92),(69,93),(76,94),(77,95),(78,96),(79,97),(80,98),(81,99),(100,265),
(101,266),(102,267),(103,268),(104,269),(105,270),(106,112),(107,113),(108,114),(109,124),
(110,125),(111,126),(115,532),(116,533),(117,534),(118,127),(119,128),(120,129),(121,130),
(122,131),(123,132),(136,568),(137,569),(138,570),(139,141),(140,142),(143,144),(145,148),
(146,149),(147,150),(151,157),(152,158),(153,159),(154,169),(155,170),(156,171),(160,577),
(161,578),(162,579),(163,172),(164,173),(165,174),(166,175),(167,176),(168,177),(181,187),
(182,188),(183,189),(184,199),(185,200),(186,201),(190,592),(191,593),(192,594),(193,202),
(194,203),(195,204),(196,205),(197,206),(198,207),(211,223),(212,224),(213,225),(214,235),
(215,236),(216,237),(217,247),(218,248),(219,249),(220,250),(221,251),(222,252),(226,238),
(227,239),(228,240),(229,253),(230,254),(231,255),(232,256),(233,257),(234,258),(241,259),
(242,260),(243,261),(244,262),(245,263),(246,264),(271,283),(272,284),(273,285),(274,295),
(275,296),(276,297),(277,298),(278,299),(279,300),(280,319),(281,320),(282,321),(286,301),
(287,302),(288,303),(289,304),(290,305),(291,306),(292,322),(293,323),(294,324),(307,655),
(308,656),(309,657),(310,658),(311,659),(312,660),(313,325),(314,326),(315,327),(316,328),
(317,329),(318,330),(331,688),(332,689),(333,690),(334,336),(335,337),(338,339),(340,343),
(341,344),(342,345),(346,358),(347,359),(348,360),(349,370),(350,371),(351,372),(352,373),
(353,374),(354,375),(355,394),(356,395),(357,396),(361,376),(362,377),(363,378),(364,379),
(365,380),(366,381),(367,397),(368,398),(369,399),(382,745),(383,746),(384,747),(385,748),
(386,749),(387,750),(388,400),(389,401),(390,402),(391,403),(392,404),(393,405),(406,409),
(407,410),(408,411),(412,778),(413,779),(414,780),(415,416),(417,419),(418,420),(421,868),
(422,869),(423,870),(424,426),(425,427),(428,429),(430,433),(431,434),(432,435),(436,448),
(437,449),(438,450),(439,460),(440,461),(441,462),(442,463),(443,464),(444,465),(445,484),
(446,485),(447,486),(451,466),(452,467),(453,468),(454,469),(455,470),(456,471),(457,487),
(458,488),(459,489),(472,925),(473,926),(474,927),(475,928),(476,929),(477,930),(478,490),
(479,491),(480,492),(481,493),(482,494),(483,495),(496,499),(497,500),(498,501),(502,958),
(503,959),(504,960),(505,506),(507,509),(508,510),(511,514),(512,515),(513,516),
(517,1048),(518,1049),(519,1050),(520,521),(522,524),(523,525),(526,529),(527,530),
(528,531),(535,536),(537,539),(538,540),(544,553),(545,554),(546,555),(547,559),(548,560),
(549,561),(550,562),(551,563),(552,564),(556,565),(557,566),(558,567),(571,574),(572,575),
(573,576),(580,581),(582,584),(583,585),(586,589),(587,590),(588,591),(595,596),(597,599),
(598,600),(601,613),(602,614),(603,615),(604,625),(605,626),(606,627),(607,637),(608,638),
(609,639),(610,640),(611,641),(612,642),(616,628),(617,629),(618,630),(619,643),(620,644),
(621,645),(622,646),(623,647),(624,648),(631,649),(632,650),(633,651),(634,652),(635,653),
(636,654),(664,673),(665,674),(666,675),(667,679),(668,680),(669,681),(670,682),(671,683),
(672,684),(676,685),(677,686),(678,687),(691,703),(692,704),(693,705),(694,715),(695,716),
(696,717),(697,727),(698,728),(699,729),(700,730),(701,731),(702,732),(706,718),(707,719),
(708,720),(709,733),(710,734),(711,735),(712,736),(713,737),(714,738),(721,739),(722,740),
(723,741),(724,742),(725,743),(726,744),(751,757),(752,758),(753,759),(754,769),(755,770),
(756,771),(763,772),(764,773),(765,774),(766,775),(767,776),(768,777),(781,793),(782,794),
(783,795),(784,796),(785,797),(786,798),(787,817),(788,818),(789,819),(790,829),(791,830),
(792,831),(805,820),(806,821),(807,822),(808,823),(809,824),(810,825),(811,832),(812,833),
(813,834),(814,835),(815,836),(816,837),(826,838),(827,839),(828,840)
\end{lstlisting}

\end{document}